\DeclareMathOperator*{\minimize}{minimize}
\DeclareMathOperator{\tr}{tr}
\newtheorem{thm}{Theorem}[section]
\newtheorem{lem}[thm]{Lemma}
\newtheorem{rem}[thm]{Remark}
\newtheorem{prop}[thm]{Proposition}
\newcommand{\grad}{\mbox{grad\,}}
\newcommand{\D}{\mbox{D\,}}
\newcommand{\ci}{\mathbbm{i}}
\newcommand{\dt}[1]{\left.{\frac{d}{dt}}\right|_{t={#1}}}
\newcommand{\bmat}[1]{\begin{bmatrix} #1 \end{bmatrix}}
\newcommand{\norm}[1]{\left\lVert#1\right\rVert}
\newcommand{\abs}[1]{\left \lvert #1 \right \rvert}
\newcommand{\ip}[1]{\left\langle #1 \right\rangle}
\newcommand{\MIN}[2]{\begin{array}{ll} \displaystyle \minimize_{#1} & {#2} \end{array}}
\newcommand{\StatexIndent}[1][3]{%
  \setlength\@tempdima{\algorithmicindent}%
  \Statex\hskip\dimexpr#1\@tempdima\relax}
\begin{document}

\begin{frontmatter}



\dochead{}

\title{On the convergence of orthogonalization-free conjugate gradient method for extreme eigenvalues of Hermitian matrices: a Riemannian optimization interpretation}

\author[shixin]{Shixin Zheng}
\author[haizhao]{Haizhao Yang}
\author[xiangxiong]{Xiangxiong Zhang}
\address[shixin]{Department of Mathematics, Purdue University, West Lafayette, IN, USA, \texttt{zheng513@purdue.edu}}
\address[haizhao]{Department of Mathematics, University of Maryland College Park, MD, USA, \texttt{hzyang@umd.edu}}
\address[xiangxiong]{Department of Mathematics, Purdue University, West Lafayette, IN, USA, \texttt{zhan1966@purdue.edu}}

\begin{abstract}
In many applications, it is desired to obtain extreme eigenvalues and eigenvectors of large Hermitian matrices by efficient and compact algorithms. In particular, orthogonalization-free methods are preferred for large-scale problems for finding eigenspaces of extreme eigenvalues without explicitly computing orthogonal vectors in each iteration. For the top $p$ eigenvalues, the simplest orthogonalization-free method is to find the best rank-$p$ approximation to a positive semi-definite Hermitian matrix by algorithms solving the unconstrained Burer-Monteiro formulation.   
We show that the nonlinear conjugate gradient method for the unconstrained Burer-Monteiro formulation is equivalent to a Riemannian conjugate gradient method on a quotient manifold with \textcolor{black}{the Bures-Wasserstein} metric, thus its global convergence to a stationary point can be proven. Numerical tests suggest that it is efficient for computing the largest $k$ eigenvalues for large-scale matrices if the largest $k$ eigenvalues are nearly distributed uniformly. 
\end{abstract}

\begin{keyword}


Hermitian matrices\sep extreme eigenvalues\sep orthogonalization free\sep conjugate gradient\sep Riemannian optimization\sep quotient manifold\sep Bures-Wasserstein metric 
\end{keyword}

\end{frontmatter}


\section{Introduction}
\label{sec1}
\setcounter{equation}{0}
\setcounter{figure}{0}
\setcounter{table}{0}

\subsection{The eigenvalue problem of Hermitian positive definite matrices}
In this paper, we are interested in solving the eigenvalue problem for a  Hermitian matrix $B \in \mathbb{C}^{n\times n}$ to find its largest $p$ eigenvalues and the corresponding eigenvectors.
For large enough $\mu>0$, $A:=B+\mu I\in \mathbb{C}^{n\times n}$ is a positive definite Hermitian matrix with the same extreme eigenspaces. Thus we focus only on Hermitian positive definite or semi-definite matrices.

Extreme eigenvalue problems for Hermitian matrices naturally arise in many applications \cite{shi2000normalized, cheeger2015lower, donath1972algorithms, fiedler1973algebraic,lu2017preconditioning,Lu2016ACS,wang2019coordinate}. For example, many problems can be cast as a graph, for which the adjacency matrix and the graph Laplacian are real symmetric thus Hermitian \cite{Pang2022ADB}.    The extreme eigenvalues and eigenvectors of these matrices contain information about the graph and the point cloud data such as diffusion maps \cite{coifman2005geometric}. Notice that the discussion in this paper also applies to the smallest $k$ eigenvalues for a positive definite Hermitian matrix $B$ by considering either $A=\mu I-B$ with large enough $\mu$ or $A=B^{-1}$  if an efficient implementation of linear system solver for $Bx=b$ is available, i.e., the matrix-vector multiplication $B^{-1}b$ can be efficiently implemented.   

The extreme eigenvalue problem can be written as an optimization problem, with many different cost functions to consider. The most well-known one is to minimize the multicolumn Rayleigh quotient
\begin{equation}
    \MIN{x \in \mathbb{C}^{n\times p}}{f(x) := \tr\left((x^*x)^{-1}x^*Ax \right).}
\end{equation}
 If assuming the spectrum of $x^*x$ is bounded by one and take the inverse of $x^*x$ as the first order approximation of the Neumann series expansion, then as an approximation to multicolumn Rayleigh quotient, a popular method known as orbital minimization method (OMM) is to minimize the cost function \cite{Corsetti2013TheOM}: 
 \begin{equation}\label{min_OMM}
    \MIN{x \in \mathbb{C}^{n\times p}}{f(x) :=  \tr\left((2I - x^*x)x^*Ax\right)}.
\end{equation}
Another simple formulation is to consider optimization over the noncompact Stiefel manifold $\mathbb{C}^{n\times p}_*=\{X\in \mathbb{C}^{n\times p}$: \mbox{rank}(X)=p\}: 
\begin{equation}\label{min_prob_BM_NS_manifold}
    \MIN{x \in \mathbb{C}^{n\times p}_*}{f(x) := \frac12\norm{xx^* - A}_F^2},
\end{equation}
where $\|\cdot\|_F$ is the matrix Frobenius norm. 
Various orthogonalization-free algorithms for solving both \eqref{min_OMM}
and \eqref{min_prob_BM_NS_manifold} were considered and compared numerically in \cite{gao2022triangularized}.

 {A third choice is LOBPCG method first introduced in  \cite{doi:10.1137/S1064827500366124}. A critical step in the LOBPCG method is a Rayleigh-Ritz procedure in which an orthonormal basis is computed to simplify calculations and ensure numerical stability and it is the only orthogonalization step. LOBPCG without orthogonalization also gives an orthogonalization-free method, which   may still work well for many problems in practice, though it
might suffer from some instability when the number of eigenpairs to be computed becomes large. Careful base selection strategies \cite{HETMANIUK2006324} \cite{doi:10.1137/17M1129830} can improve its robustness.}

\subsection{The real inner product and Fr\'{e}chet derivatives}
 In this paper, we mainly focus on the cost function \eqref{min_prob_BM_NS_manifold} and consider the nonlinear conjugate gradient (CG) methods solving \eqref{min_prob_BM_NS_manifold}.
 
Since $f(x)$ is real-valued and thus not holomorphic,  $f(x)$ does not have a complex derivative with respect to $x\in\mathbb C^{n\times p}$. 
The linear spaces of complex matrices will therefore be regarded as vector spaces over $\mathbb R$. 
For any real vector space $\mathcal{E}$, the inner product on $\mathcal{E}$ is denoted by $\ip{.,.}_\mathcal{E}$.  For real matrices $A,B \in \mathbb{R}^{n\times p}$, the Hilbert–Schmidt inner product is $\ip{A,B}_{\mathbb{R}^{n \times p}} = {\tr(A^T B)}$.
Let  $\Re({A})$ and $\Im(B)$ represent the real and imaginary parts of a complex matrix $A$.
For $A,B \in \mathbb{C}^{n\times p}$, the real inner product for the real vector space $\mathbb{C}^{n\times p}$ then equals
 \begin{equation}
     \ip{A,B}_{\mathbb{C}^{n\times p}} := \Re({\tr(A^* B)}),
     \label{real-inner-product}
 \end{equation} where ${}^*$ is the conjugate transpose. We emphasize that \eqref{real-inner-product} is a real inner product, rather than the complex Hilbert–-Schmidt inner product. It is straightforward to verify that \eqref{real-inner-product} can be written as
 \[\ip{A,B}_{\mathbb{C}^{m\times n}}=\tr(\Re(A)^T \Re(B))+\tr(\Im(A)^T \Im(B))=\ip{\Re(A),\Re(B)}_{\mathbb{R}^{m\times n}}+\ip{\Im(A),\Im(B)}_{\mathbb{R}^{m\times n}}. \]
With the real inner product \eqref{real-inner-product} for the real vector space $\mathbb C^{n\times p}$, 
a Fr\'{e}chet derivative for the real-valued function $f(x)$ can be defined as
\begin{equation}
     \nabla{f}(x) = \nabla{f}_{\Re(x)}(x)+\ci\nabla{f}_{\Im(x)}(x)
     \in \mathbb{C}^{n\times p},
     \label{frechet-derivative-1}
\end{equation} 
where $\nabla{f}_{\Re(x)}(x), \nabla{f}_{\Im(x)}(x) \in \mathbb R^{n\times p}$
are the gradient of the cost function $f$ with respect to the real and imaginary parts of $x$, respectively. 
In particular, for $f(x)=\frac12\|\mathcal A(xx^*)-b\|_F^2$ with a linear operator $\mathcal A$, the Fr\'{e}chet derivative \eqref{frechet-derivative-1} becomes
\[ \nabla{f}(x)=2\mathcal A^*(\mathcal A(xx^*)-b)x,\]
where $\mathcal A^*$ is the adjoint operator of $\mathcal A$. 
See Appendix in \cite{zheng2023riemannian}  for details. 

\subsection{The conjugate gradient method solving the Burer-Monteiro formulation}

Notice that $\mathbb{C}^{n\times p}_*$ is an open set in the Euclidean space $\mathbb{C}^{n\times p}$, thus any line search method $x_{k+1}=x_k+\alpha_k \eta_k$ starting with the iterate $x_k\in \mathbb{C}^{n\times p}_*$ and a small enough step size $\alpha_k$ will give $x_{k+1}\in \mathbb{C}^{n\times p}_*$. Therefore, any such line search algorithm can be regarded as the same algorithm solving an unconstrained problem with a non-degenerate $x_k\in \mathbb{C}^{n\times p}_*$:
\begin{equation}\label{min_prob_BM}
    \MIN{x \in \mathbb{C}^{n\times p}}{f(x) := \frac12\norm{xx^* - A}_F^2}.
\end{equation}

In the literature, the formulation \eqref{min_prob_BM} is often called the Burer-Monteiro method for Hermitian positive semi-definite (PSD) fixed rank $p$ constraint, i.e., for minimizing $\|X-A\|_F^2$ where $X$ is a Hermitian PSD matrix of rank $p$. 

The nonlinear conjugate gradient method for \eqref{min_prob_BM} can be written as
\begin{equation}
    \begin{cases}
x_{k+1}&=x_k+\alpha_k \eta_k,\\
\eta_{k+1}&=-\nabla f(x_k)+\beta_k\eta_k=-2(xx^*-A)x+\beta_k\eta_k,
\end{cases}
\label{BMCG}
\end{equation}
where $\alpha_k$ is the step size, $\beta_k$ is a nonlinear coefficient computed by various formulae, \textcolor{black}{and $\eta_k$ is the search direction in CG method.}
 In this paper, we only consider two variants for how to compute $\beta_k$: one is the Polak–Ribi\'{e}re CG method, and the other one is the Fletcher-Reeves CG method for computing the conjugate direction
\cite{nocedal1999numerical}. 

\subsection{The main result: the convergence of Riemannian conjugate gradient method via quotient geometry}
 
 The CG method \eqref{BMCG} for finding top $p$ eigenvalues of Hermitian PSD matrix $A$ has been considered in \cite{gao2022triangularized}. In particular, \eqref{BMCG} does not require any orthogonalization operation in each iteration, and its performance is superior especially for uniformly distributed eigenvalues in numerical tests. 
 
 The landscape of \eqref{min_prob_BM} has been well studied in \cite{gao2022triangularized, liu2015efficient, jin2017escape,  li2019coordinatewise} and its local minimizers must also be 
global minimizers. 
 Theorem 2.1 in \cite{gao2022triangularized} implies that, if $\hat x\in \mathbb C^{n\times p}_*$
satisfies $\nabla f(\hat x)=0$ for $f(x)=\frac12\|xx^*-A\|_F^2$, then $\hat x=UO$ where  $O\in \mathbb C^{p\times p}$ is a unitary matrix, and $U\in \mathbb C^{n\times p}$ has orthogonal columns as some eigenvectors of $A$. 
Furthermore, any local minimum is a global minimum, i.e., any local minimizer of \eqref{min_prob_BM} in $\mathbb C^{n\times p}_*$ has the form $\hat x=UO$ with columns of $U$ being eigenvectors of a Hermitian PSD matrix $A$ corresponding to its top $p$ eigenvectors. 

However, the convergence of CG method \eqref{BMCG} for \eqref{min_prob_BM} has never been rigorously justified. 

Notice that there is an ambiguity up to unitary matrices in both formulations \eqref{min_prob_BM} and \eqref{min_prob_BM_NS_manifold}, that is $f(xO)=f(x)$ for any $O\in\mathcal O_p$, where $\mathcal O_p$ are all $p\times p$ unitary matrices. To this end, mathematically it is proper to consider an equivalence class for each $x\in \mathbb C_*^{n\times p}$:
\[[x] = \{xO: \forall O\in \mathcal{O}_p \},\]
and a quotient set 
\[\mathbb{C}^{n\times p}_*/\mathcal{O}_p:=\{[x]: \forall x\in \mathbb C_*^{n\times p}\}.\]

The quotient set with a proper metric becomes a quotient manifold. It is not uncommon to abuse notation by letting $x$ denote the equivalent class $[x]$, and $\overline x$ denote one representation of this equivalent class.
So we can instead consider the optimization over the quotient manifold:
\begin{equation}\label{min_prob_quotient_manifold}
	\MIN{x \in \mathbb{C}^{n\times p}_*/\mathcal{O}_p }{h(x):= f(\overline x) = \frac{1}{2}\norm{\overline x\overline x^*-A}^2_F }.
\end{equation} 

Following the recent progress in \cite{zheng2023riemannian} for Riemannian optimization over Hermitian PSD fixed rank manifolds, we first show that the simple unconstrained Burer-Monteiro CG method \eqref{BMCG} is equivalent to a Riemannian CG method solving \eqref{min_prob_quotient_manifold} over the quotient manifold $\mathbb{C}^{n\times p}_*/\mathcal{O}_p$ with \textcolor{black}{the Bures-Wasserstein metric \cite{massart2019curvature}} and proper retraction and vector transport operators. Then with existing Riemannian optimization convergence theory, we can establish the global convergence of the simple algorithm \eqref{BMCG} to a stationary point of \eqref{min_prob_BM_NS_manifold}.   \textcolor{black}{We emphasize that the main result of this paper is the global convergence proof for the classical simple algorithm \eqref{BMCG}, and we do not modify the algorithm  \eqref{BMCG} at all. The Riemannian optimization is used only for proving convergence of  \eqref{BMCG}, and \eqref{BMCG} should not be implemented via much more complicated Riemannian optimization over a quotient manifold.}

\subsection{Related work and contributions}

To be more specific, we will show that both the Polak–Ribi\'{e}re CG method and the Fletcher-Reeves CG method in \eqref{BMCG} are equivalent to their Riemannian variants over the quotient manifold $\mathbb{C}^{n\times p}_*/\mathcal{O}_p$ with \textcolor{black}{the Bures-Wasserstein metric \cite{massart2019curvature}}.

Moreover, this equivalence allows us to establish the global convergence of the conventional Fletcher-Reeves CG method \eqref{BMCG} to a stationary point of \eqref{min_prob_BM_NS_manifold}, following the convergence of the Riemannian Fletcher-Reeves CG method in \cite{sato2015new}. For the problem  \eqref{min_prob_BM}, it has been well known that local minima are also global minima \cite{liu2015efficient, jin2017escape,  li2019coordinatewise, gao2022triangularized}, e.g., critical points are either global minima or saddle points.  
Combined with the result that first-order methods almost always avoid strict saddle points \cite{lee2019first}, we obtain a justification of the global convergence of the conventional Fletcher-Reeves CG method \eqref{BMCG} to the global minimizer of \eqref{min_prob_BM_NS_manifold}.
For the Polak–Ribi\'{e}re CG method, the convergence is much harder to establish, but its numerical performance is often superior. 

In the literature, notable convergence results for orthogonalization-free methods include global convergence of perturbed gradient descent for \eqref{min_prob_BM}  in \cite{jin2017escape} and global convergence of TriOFM in \cite{gao2021global}.

The same CG algorithm \eqref{BMCG} was also considered in \cite{gao2022triangularized} for real symmetric matrices. Both our algorithm and convergence proof also apply to the Hermitian matrices. 
We also verify the numerical performance of the discussed algorithms on large matrices of the size millions by millions. In particular, our numerical tests for large matrices are consistent with the observation in \cite{gao2022triangularized} that the simple CG method \eqref{BMCG} is superior for nearly uniformly distributed extreme eigenvalues. 

\textcolor{black}{This paper mainly focuses on the convergence analysis of the simplest orthogonalization-free method \eqref{BMCG} which is fully scalable in parallel computing. Developing distributed and parallel numerical implementation will be left as future work. In the literature, most numerical solvers for eigenvalue problems rely on orthogonalization to achieve high efficiency in sequential computing. Well-developed algorithms with orthogonalization include \cite{doi:10.1137/S1064827500366124,ZHOU2006172,NEYMEYR2006114,COAKLEY2013379}. To achieve better parallel efficiency for a full eigendecomposition, spectrum slicing can be applied to estimate different eigenpairs in different spectrum regions simultaneously \cite{AKTULGA2014195,doi:10.1137/15M1054493,PhysRevB.79.115112,10.14492/hokmj/1272848031,doi:10.1137/16M1061965,doi:10.1137/16M1086601}. }

\subsection{Outline of this paper}

We first review basic concepts and known results for Riemannian quotient manifolds $\mathbb{C}_*^{n\times p}/\mathcal{O}_p$ in Section \ref{sec-notation}. 
Then we review the equivalence of the conventional CG method to the Riemannian CG method in Section \ref{sec-cg}. The convergence proof of the Riemannian CG method is provided in Section \ref{sec-proof}. 
In Section \ref{sec-CRGD},  \textcolor{black}{we show that the simple coordinate descent method of minimizing \eqref{min_prob_BM} is also equivalent to a coordinate Riemannian gradient descent method. }
Section \ref{sec-tests} includes numerical tests. 
Concluding remarks are given in Section \ref{sec-remarks}.

\section{Preliminaries: Riemannian Quotient Manifold $\mathbb{C}_*^{n\times p}/\mathcal{O}_p$}
\label{sec-notation}

In this section, we briefly review some known results of the Riemannian geometry of $\mathbb{C}_*^{n\times p}/\mathcal{O}_p$  that will be used in this paper. Any missing details can be found in \cite{zheng2023riemannian}. 

\subsection{$\mathbb{C}_*^{n\times p}/\mathcal{O}_p$ as a quotient manifold}

Define
$\mathbb{C}^{n\times p}_*=\{X\in \mathbb{C}^{n\times p}: \mbox{rank}(X)=p\}$
and an equivalence relation on  $\mathbb{C}^{n\times p}_*$ through the smooth Lie group action of unitary matrices $\mathcal{O}_p$ on the manifold $\mathbb{C}^{n\times p}_*$:
\[
\begin{aligned}
     \mathbb{C}^{n\times p}_* \times \mathcal{O}_p &\rightarrow& \mathbb{C}^{n\times p}_*, \qquad 
    (\overline{x},O) &\mapsto& \overline{x}O.
\end{aligned}
\]
This action defines an equivalence relation on $\mathbb{C}^{n\times p}_*$ by setting $\overline{x}_1 \sim \overline{x}_2$ if there exists an $O \in \mathcal{O}_p$ such that $\overline{x}_1 = \overline{x}_2 O$. Hence we have constructed a quotient space $\mathbb{C}^{n\times p}_*/\mathcal{O}_p$ that removes this ambiguity. The set $\mathbb{C}^{n\times p}_*$ is called the \textit{total space} of $\mathbb{C}^{n\times p}_*/\mathcal{O}_p$. 

Denote the natural projection as
\[
\begin{aligned}
\pi: \mathbb{C}^{n\times p}_* &\rightarrow& &\mathbb{C}^{n\times p}_*/\mathcal{O}_p, \qquad 
\overline{x} &\mapsto& & x. 
\end{aligned}
\]
We denote the equivalence class containing $\overline{x}$ as 
$$[\overline{x}]=\pi^{-1}(x) =\left\{ \overline{x}O \vert O\in \mathcal{O}_p \right\}. $$

Following Corollary 21.6 and Theorem 21.10 of \cite{lee_introduction_2012}, $\mathbb{C}^{n\times p}_*/\mathcal{O}_p$ is a smooth manifold as stated in the following theorem.
 
\begin{thm}
The quotient space $\mathbb{C}^{n\times p}_*/\mathcal{O}_p$ is a quotient manifold over $\mathbb R$ of dimension  $2np-p^2$ and has a unique smooth structure such that the natural projection $\pi$ is a smooth submersion. 
 
\end{thm}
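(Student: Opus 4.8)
The statement is an instance of the Quotient Manifold Theorem (Theorem 21.10 of \cite{lee_introduction_2012}), which asserts that whenever a Lie group $G$ acts smoothly, freely, and properly on a smooth manifold $M$, the orbit space $M/G$ carries a unique smooth structure of dimension $\dim M - \dim G$ for which the projection is a smooth submersion. The plan is therefore to view $M=\mathbb{C}^{n\times p}_*$ as a smooth real manifold (an open subset of the real vector space $\mathbb{C}^{n\times p}$, hence of real dimension $2np$) and $G=\mathcal{O}_p$ as the compact real Lie group $U(p)$ of real dimension $p^2$, and then to verify the three hypotheses of the theorem for the action $(\overline{x},O)\mapsto \overline{x}O$.

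First I would check smoothness and freeness. The action map is given entrywise by polynomial expressions in the real and imaginary parts of $\overline{x}$ and $O$, so it is smooth; moreover $\overline{x}O\in\mathbb{C}^{n\times p}_*$ whenever $\overline{x}\in\mathbb{C}^{n\times p}_*$, since right multiplication by an invertible $O$ preserves rank. For freeness, suppose $\overline{x}O=\overline{x}$ for some $\overline{x}\in\mathbb{C}^{n\times p}_*$ and $O\in\mathcal{O}_p$. Because $\overline{x}$ has full column rank $p$, the Hermitian matrix $\overline{x}^*\overline{x}$ is invertible, and left-multiplying $\overline{x}O=\overline{x}$ by $(\overline{x}^*\overline{x})^{-1}\overline{x}^*$ yields $O=I$. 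Hence the stabilizer of every point is trivial and the action is free. This step is where the restriction to the full-rank set $\mathbb{C}^{n\times p}_*$ is essential: at a rank-deficient matrix a nontrivial unitary could fix the point, so freeness would fail on all of $\mathbb{C}^{n\times p}$.

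Next I would establish properness, which is the only remaining hypothesis. Here the key observation is that $\mathcal{O}_p=U(p)$ is compact, and by Corollary 21.6 of \cite{lee_introduction_2012} every continuous action of a compact Lie group on a manifold is automatically proper. With smoothness, freeness, and properness all verified, the Quotient Manifold Theorem applies directly and gives the existence and uniqueness of the smooth structure making $\pi$ a smooth submersion. Finally, the dimension follows from the dimension formula, namely $\dim(\mathbb{C}^{n\times p}_*/\mathcal{O}_p)=\dim\mathbb{C}^{n\times p}_*-\dim\mathcal{O}_p=2np-p^2$. I do not anticipate any genuine obstacle, since each hypothesis is routine once the group and total space are correctly identified; the single point that requires care is the freeness argument and its reliance on the full-rank condition.
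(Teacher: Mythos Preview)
Your proposal is correct and follows exactly the approach indicated in the paper, which simply cites Corollary 21.6 and Theorem 21.10 of \cite{lee_introduction_2012} without spelling out the verification of the hypotheses. You have filled in the details the paper omits (smoothness, freeness via the full-rank condition, and properness from compactness of $\mathcal{O}_p$), all of which are accurate.
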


\subsection{Vertical space}
The equivalence class $[\overline{x}] = \pi^{-1}(x)$ is an embedded submanifold of  $\mathbb{C}^{n\times p}_*$(\cite[Prop.~3.4.4]{absil_optimization_2008}). The tangent space of $[\overline{x}] $ at $\overline{x}$ is therefore a subspace of $\mathbb{C}^{n\times p}$ called the \textit{vertical space} at $\overline{x}$ and is denoted by $\mathcal{V}_{\overline{x}} $. The following proposition characterizes $\mathcal{V}_{\overline{x}}$. 
 
 \begin{prop}
The vertical space at $\overline{x}\in [\overline{x}]=\left\{ \overline{x}O \vert O\in \mathcal{O}_p \right\} $, which is the tangent space of $[\overline{x}] $ at $\overline{x}$ is 
\[
    \mathcal{V}_{\overline{x}} = \left\{\overline{x} \Omega\vert \Omega^*  = - \Omega, \Omega \in \mathbb{C}^{p\times p} \right\}. 
\]
\end{prop}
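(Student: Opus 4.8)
The plan is to characterize the vertical space $\mathcal V_{\overline x}$ as the tangent space to the fiber (equivalence class) $[\overline x] = \{\overline x O : O \in \mathcal O_p\}$ at the point $\overline x$, by differentiating curves that stay inside the fiber. First I would recall that the fiber is the orbit of $\overline x$ under the Lie group action of $\mathcal O_p$, so every smooth curve in the fiber through $\overline x$ has the form $\gamma(t) = \overline x\, O(t)$ with $O(t) \in \mathcal O_p$ a smooth curve satisfying $O(0) = I$ (since $O(0)$ must map $\overline x$ to itself, and by right-multiplying by a fixed unitary we may normalize to $O(0)=I$).

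The key computational step is to identify the tangent space $T_I \mathcal O_p$ of the unitary group at the identity. Here I would note that $\mathcal O_p$ denotes the group of $p \times p$ unitary matrices (the notation in this paper, despite the ``$\mathcal O$''), so its Lie algebra is the space of skew-Hermitian matrices $\{\Omega \in \mathbb C^{p\times p} : \Omega^* = -\Omega\}$. This follows by differentiating the defining relation $O(t)^* O(t) = I$ at $t = 0$: writing $\Omega = O'(0)$, one gets $\Omega^* + \Omega = 0$, i.e. $\Omega^* = -\Omega$. Conversely, for any skew-Hermitian $\Omega$ the curve $O(t) = \exp(t\Omega)$ is unitary and has derivative $\Omega$ at $t=0$, so the tangent space is exactly the skew-Hermitian matrices.

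Combining these, differentiating $\gamma(t) = \overline x\, O(t)$ at $t = 0$ gives $\gamma'(0) = \overline x\, \Omega$ with $\Omega$ skew-Hermitian, which shows $\mathcal V_{\overline x} \subseteq \{\overline x \Omega : \Omega^* = -\Omega\}$. For the reverse inclusion I would take any skew-Hermitian $\Omega$ and exhibit the curve $t \mapsto \overline x \exp(t\Omega)$, which lies in the fiber and realizes $\overline x \Omega$ as a tangent vector, giving the opposite containment and hence equality. A final remark to verify is that these two sets have the same dimension, so no degeneracy is lost: since $\overline x$ has full column rank $p$, the linear map $\Omega \mapsto \overline x \Omega$ is injective, so $\dim \mathcal V_{\overline x}$ equals $\dim\{\Omega : \Omega^* = -\Omega\} = p^2$, consistent with the fiber having dimension $\dim \mathcal O_p = p^2$.

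The main obstacle, though minor, is the injectivity argument ensuring the map $\Omega \mapsto \overline x\Omega$ does not collapse distinct skew-Hermitian matrices: this is where the full-rank hypothesis $\overline x \in \mathbb C^{n\times p}_*$ is essential, since $\overline x \Omega = 0$ with $\overline x$ having a left inverse forces $\Omega = 0$. Everything else is a routine Lie-theoretic computation, and the characterization follows directly once the Lie algebra of $\mathcal O_p$ is correctly identified as the skew-Hermitian matrices.
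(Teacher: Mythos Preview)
Your argument is correct and is the standard way to establish this characterization: differentiate curves $t\mapsto \overline x\,O(t)$ in the fiber, identify $T_I\mathcal O_p$ with the skew-Hermitian matrices, and use the full-rank assumption on $\overline x$ to ensure the map $\Omega\mapsto \overline x\Omega$ is injective. The paper itself states this proposition without proof, treating it as a known fact from the Riemannian geometry of $\mathbb C^{n\times p}_*/\mathcal O_p$ (with details deferred to the references), so your write-up supplies exactly the routine Lie-theoretic computation that the paper omits.
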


\subsection{Riemannian metric} 
 A \textit{Riemannian metric} $g$ is a smoothly varying inner product defined on the tangent space. That is, $g_{\overline{x}}(\cdot,\cdot)$ is an inner product on $T_{\overline{x}}\mathbb{C}^{n\times p}_*$. Once we choose a Riemannian metric $g$ for $\mathbb{C}^{n\times p}_*$, we can obtain the orthogonal complement in $T_{\overline{x}} \mathbb{C}^{n\times p}_*$ of $\mathcal{V}_{\overline{x}}$ with respect to the metric. In other words, we choose the \textit{horizontal distribution} as orthogonal complement w.r.t. Riemannian metric, see \cite[Section 3.5.8]{absil_optimization_2008}. This orthogonal complement to $\mathcal{V}_{\overline{x}}$ is called \textit{horizontal space} at $\overline{x}$ and is denoted by $\mathcal{H}_{\overline{x}}$. We thus have
\begin{equation}
\label{tangentspace-decomp}
    T_{\overline{x}}\mathbb{C}^{n\times p}_*= \mathcal{H}_{\overline{x}} \oplus \mathcal{V}_{\overline{x}}.
\end{equation}

Once we have the horizontal space, there exists a unique vector $\overline{\xi}_{\overline{x}} \in \mathcal{H}_{\overline{x}}$ that satisfies $\D\pi(\overline{x})[\overline{\xi}_{\overline{x}}] = \xi_{x}$ for each $\xi_{x} \in T_{x}\mathbb{C}^{n\times p}_*/\mathcal{O}_p$. This  $\overline{\xi}_{\overline{x}}$ is called the \textit{horizontal lift} of $\xi_{x}$ at $\overline{x}$.

In this paper, we consider the Riemannian metric on $\mathbb{C}^{n\times p}_*$ to be the canonical Euclidean inner product on $\mathbb{C}^{n\times p}$ defined by 
\begin{equation}\label{eqn:riemannian_metric}
    g_{\overline{x}}(A,B) := \ip{A,B}_{\mathbb{C}^{n\times p}} = \Re(\tr(A^*B)), \quad \forall A,B\in T_{\overline{x}}\mathbb{C}^{n\times p}_* = \mathbb{C}^{n\times p}. 
\end{equation}

\begin{prop}
Under metric $g$ defined in \eqref{eqn:riemannian_metric}, the horizontal space at $\overline{x}$ satisfies 
\begin{eqnarray*}
    \mathcal{H}_{\overline{x}} &=& \left\{z\in \mathbb{C}^{n\times p}: \overline{x}^*z =z^*\overline{x} \right\} = \left\{\overline{x}(\overline{x}^*\overline{x})^{-1}S +\overline{x}_\perp K \vert S^* = S, S \in \mathbb{C}^{p\times p}, K \in \mathbb{C}^{(n-p)\times p} \right\}.
\end{eqnarray*}
\end{prop}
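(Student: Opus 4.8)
The plan is to compute the orthogonal complement of $\mathcal{V}_{\overline{x}}$ directly from the definition of the metric $g$ in \eqref{eqn:riemannian_metric}, and then convert the resulting orthogonality condition into the explicit parametrization. By definition, $\mathcal{H}_{\overline{x}}$ consists of all $z\in\mathbb{C}^{n\times p}$ with $g_{\overline{x}}(v,z)=0$ for every $v\in\mathcal{V}_{\overline{x}}$. Using the characterization $v=\overline{x}\Omega$ with $\Omega^*=-\Omega$ from the preceding proposition, I would compute $g_{\overline{x}}(\overline{x}\Omega,z)=\Re(\tr(\Omega^*\overline{x}^*z))=-\Re(\tr(\Omega M))$, where $M:=\overline{x}^*z\in\mathbb{C}^{p\times p}$, so that the membership condition reduces to $\Re(\tr(\Omega M))=0$ for all skew-Hermitian $\Omega$.

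The crux is the linear-algebra fact that, under the real inner product $\langle A,B\rangle=\Re(\tr(A^*B))$ on $\mathbb{C}^{p\times p}$ regarded as a $2p^2$-dimensional real vector space, the real orthogonal complement of the skew-Hermitian matrices is exactly the Hermitian matrices. I would establish this by observing that for Hermitian $H$ and skew-Hermitian $K$ one has $\overline{\tr(HK)}=\tr((HK)^*)=\tr(K^*H^*)=-\tr(KH)=-\tr(HK)$, so $\tr(HK)$ is purely imaginary and $\Re(\tr(HK))=0$; since the Hermitian and skew-Hermitian matrices each have real dimension $p^2$ and together span $\mathbb{C}^{p\times p}$, they are mutually orthogonal complements. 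Writing $\Re(\tr(\Omega M))=-\langle\Omega,M\rangle$ for skew-Hermitian $\Omega$, the condition $\Re(\tr(\Omega M))=0$ for all such $\Omega$ is then equivalent to $M$ being Hermitian, i.e. $\overline{x}^*z=(\overline{x}^*z)^*=z^*\overline{x}$. This yields the first equality.

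For the second equality I would use the orthogonal decomposition of $\mathbb{C}^{n\times p}$ induced by the column space of $\overline{x}$. Since $\overline{x}$ has full column rank, the orthogonal projector onto its range is $P=\overline{x}(\overline{x}^*\overline{x})^{-1}\overline{x}^*$, and with $\overline{x}_\perp\in\mathbb{C}^{n\times(n-p)}$ an orthonormal basis of the complementary subspace (so $\overline{x}^*\overline{x}_\perp=0$ and $\overline{x}_\perp^*\overline{x}_\perp=I_{n-p}$) every $z$ can be written uniquely as $z=\overline{x}(\overline{x}^*\overline{x})^{-1}(\overline{x}^*z)+\overline{x}_\perp(\overline{x}_\perp^*z)$. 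Setting $S:=\overline{x}^*z$ and $K:=\overline{x}_\perp^*z$, the condition $\overline{x}^*z=z^*\overline{x}$ is precisely $S=S^*$ while $K$ remains unconstrained; conversely, any $z$ of the parametrized form satisfies $\overline{x}^*z=S$ because $\overline{x}^*\overline{x}_\perp=0$, which is Hermitian. This gives the set equality in both directions.

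I expect the only genuine obstacle to be the orthogonality computation in the second paragraph; the remaining arguments are essentially bookkeeping. As a consistency check I would also verify the dimension count: the Hermitian $S$ contributes $p^2$ real parameters and $K\in\mathbb{C}^{(n-p)\times p}$ contributes $2(n-p)p$, for a total of $2np-p^2$, matching the dimension of the quotient manifold together with the complementary $p^2$-dimensional vertical space $\mathcal{V}_{\overline{x}}$.
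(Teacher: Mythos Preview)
Your argument is correct: the orthogonality computation reducing membership in $\mathcal{H}_{\overline{x}}$ to Hermitianity of $\overline{x}^*z$, together with the column-space decomposition $z=\overline{x}(\overline{x}^*\overline{x})^{-1}S+\overline{x}_\perp K$, is exactly the natural route. The paper states this proposition without proof (it is treated as a standard fact, with details deferred to \cite{zheng2023riemannian}), so your write-up simply supplies the omitted verification; there is no alternative approach in the paper to compare against.
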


\subsection{Projections onto vertical space and horizontal space}\label{section:quotient_projection}
Due to the direct sum property
\eqref{tangentspace-decomp}, for our choices of $\mathcal{H}_Y$, there exist projection operators for any $z \in T_Y\mathbb{C}^{n\times p}_* = \mathbb{C}^{n\times p}$ to $\mathcal{H}_Y$ as
\begin{equation*}
    z = P_{\overline{x}}^{\mathcal{V}}(z) + P_{\overline{x}}^{\mathcal{H}}(A).
\end{equation*}

It is straightforward to verify the following formulae for projection operators $P_Y^\mathcal{V}$ and $P_Y^{\mathcal{H}}$.

\begin{prop}\label{prop:projections_quotient}
The orthogonal projections  of any $z \in \mathbb{C}^{n \times p}$ to $\mathcal{V}_{\overline{x}}$ and $\mathcal{H}_{\overline{x}}$ are 
\begin{equation*}
    P^\mathcal{V}_{\overline{x}}(z) = \overline{x}\Omega,\quad  P^{\mathcal{H}}_{\overline{x}} (z)= z - \overline{x}\Omega,
\end{equation*}
where $\Omega$ is the skew-symmetric matrix that solves the Lyapunov equation
\[
    \Omega \overline{x}^*\overline{x} + \overline{x}^*\overline{x} \Omega = \overline{x}^* z- z^* \overline{x}.
\]
\end{prop}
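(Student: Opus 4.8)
The plan is to verify the claimed formulae directly by checking two things: that $P^{\mathcal{V}}_{\overline{x}}(z) = \overline{x}\Omega$ lands in the vertical space $\mathcal{V}_{\overline{x}}$, and that the remainder $P^{\mathcal{H}}_{\overline{x}}(z) = z - \overline{x}\Omega$ lands in the horizontal space $\mathcal{H}_{\overline{x}}$. Since the tangent space splits as $T_{\overline{x}}\mathbb{C}^{n\times p}_* = \mathcal{H}_{\overline{x}} \oplus \mathcal{V}_{\overline{x}}$ by \eqref{tangentspace-decomp}, and orthogonal projections onto a direct-sum decomposition are uniquely characterized by the pair of membership conditions, establishing both memberships is enough. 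I will not need to re-derive the orthogonality of the two subspaces, as that was settled when the horizontal space was defined as the orthogonal complement of $\mathcal{V}_{\overline{x}}$ under the metric \eqref{eqn:riemannian_metric}.

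First I would address the vertical membership. By the earlier proposition characterizing $\mathcal{V}_{\overline{x}}$, an element lies in the vertical space precisely when it has the form $\overline{x}\Omega$ with $\Omega^* = -\Omega$. So I must confirm that the $\Omega$ solving the Lyapunov equation is genuinely skew-Hermitian. Taking the conjugate transpose of the Lyapunov equation $\Omega \overline{x}^*\overline{x} + \overline{x}^*\overline{x}\,\Omega = \overline{x}^*z - z^*\overline{x}$, and noting that $\overline{x}^*\overline{x}$ is Hermitian while the right-hand side $\overline{x}^*z - z^*\overline{x}$ is skew-Hermitian, I get an equation showing that $-\Omega^*$ satisfies the same Lyapunov equation as $\Omega$. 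Uniqueness of the solution (guaranteed because $\overline{x}^*\overline{x}$ is Hermitian positive definite, so its spectrum is positive and the associated Lyapunov/Sylvester operator $\Omega \mapsto \Omega M + M\Omega$ with $M = \overline{x}^*\overline{x}$ is invertible) then forces $\Omega^* = -\Omega$, giving $\overline{x}\Omega \in \mathcal{V}_{\overline{x}}$.

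Next I would verify the horizontal membership. By the proposition characterizing $\mathcal{H}_{\overline{x}}$, an element $w$ lies in the horizontal space exactly when $\overline{x}^* w = w^* \overline{x}$, i.e. $\overline{x}^* w$ is Hermitian. Setting $w = z - \overline{x}\Omega$, I compute $\overline{x}^* w - w^* \overline{x} = (\overline{x}^* z - z^* \overline{x}) - (\overline{x}^*\overline{x}\,\Omega - \Omega^* \overline{x}^*\overline{x})$. Using $\Omega^* = -\Omega$ from the previous step, the second group becomes $\overline{x}^*\overline{x}\,\Omega + \Omega\,\overline{x}^*\overline{x}$, which is exactly the left-hand side of the Lyapunov equation and hence equals $\overline{x}^* z - z^* \overline{x}$. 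The two contributions cancel, giving $\overline{x}^* w = w^* \overline{x}$, so $w \in \mathcal{H}_{\overline{x}}$.

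The only genuinely delicate point is the well-posedness of the Lyapunov equation that defines $\Omega$: I must confirm the operator $\Omega \mapsto \Omega\,\overline{x}^*\overline{x} + \overline{x}^*\overline{x}\,\Omega$ is invertible so that $\Omega$ exists and is unique, and that it maps the skew-Hermitian space into itself so the projection formulae are consistent. This follows from $\overline{x} \in \mathbb{C}^{n\times p}_*$ having full column rank, which makes $M = \overline{x}^*\overline{x}$ Hermitian positive definite; the eigenvalues of the Sylvester operator are the pairwise sums $\lambda_i + \lambda_j$ of eigenvalues of $M$, all strictly positive, so the operator is nonsingular. Everything else is the routine adjoint-and-cancel computation sketched above, which I would present compactly rather than in full.
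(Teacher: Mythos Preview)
Your proposal is correct and follows exactly the direct verification the paper alludes to when it says the formulae are ``straightforward to verify,'' together with the Lyapunov uniqueness observation in Remark~\ref{rem-Lyapunov}. There is nothing to add: checking that $\Omega$ is skew-Hermitian via uniqueness, that $\overline{x}\Omega\in\mathcal{V}_{\overline{x}}$, and that $z-\overline{x}\Omega$ satisfies the Hermitian condition defining $\mathcal{H}_{\overline{x}}$ is precisely the intended argument.
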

\begin{rem}
\label{rem-Lyapunov}
The solution $X$ to the Lyapunov equation $XE +EX = Z$ for a Hermitian $E$ is unique if $E$ is Hermitian positive-definite \cite[Section 2.2]{massart_quotient_2020}. Let $E = U\Lambda U^*$ be the SVD, then the Lyapunov equation  $XE +EX = Z$ becomes 
\begin{equation*}
     (U^*XU)\Lambda +\Lambda (U^*XU)  = U^*ZU, 
\end{equation*}
which gives the solution 
\begin{equation*}
    (U^*XU)_{i,j} = (U^*ZU)_{i,j}/(\Lambda_{i,i}+\Lambda_{j,j}).
\end{equation*}
\end{rem}

\subsection{$\mathbb{C}^{n\times p}_*/\mathcal{O}_p$ as  Riemannian quotient manifold}
First, we show in the following lemma the relationship between the horizontal lifts of the quotient tangent vector $\xi_{x}$ lifted at different representatives in $[\overline{x}]$. 
\begin{lem}\label{lem:horizontal_lift}
Let $\eta$ be a vector field on $\mathbb{C}^{n\times p}_*/\mathcal{O}_p$, and let $\overline{\eta}$ be the horizontal lift of $\eta$. Then for each $\overline{x}\in \mathbb{C}^{n\times p}_*$, we have $$\overline{\eta}_{\overline{x}O} = \overline{\eta}_{\overline{x}}O,\qquad \forall O\in \mathcal{O}_p. $$  
\end{lem}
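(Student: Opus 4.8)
The plan is to use the defining property of the horizontal lift together with the invariance of $\pi$ under the group action $R_O:\overline{x}\mapsto\overline{x}O$. By definition the horizontal lift $\overline{\eta}_{\overline{y}}$ of the vector field $\eta$ at a representative $\overline{y}$ is the \emph{unique} vector that (i) lies in the horizontal space $\mathcal{H}_{\overline{y}}$ and (ii) satisfies $\D\pi(\overline{y})[\overline{\eta}_{\overline{y}}]=\eta_{\pi(\overline{y})}$. Since $\pi(\overline{x}O)=\pi(\overline{x})=x$, the object $\overline{\eta}_{\overline{x}O}$ is pinned down by these two conditions at the point $\overline{x}O$. My strategy is to show that the candidate $\overline{\eta}_{\overline{x}}O$ already satisfies (i) and (ii), so that uniqueness forces $\overline{\eta}_{\overline{x}O}=\overline{\eta}_{\overline{x}}O$.

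First I would check horizontality, i.e.\ that $\overline{\eta}_{\overline{x}}O\in\mathcal{H}_{\overline{x}O}$. Using the characterization $\mathcal{H}_{\overline{y}}=\{z:\overline{y}^*z=z^*\overline{y}\}$, the assumption $\overline{\eta}_{\overline{x}}\in\mathcal{H}_{\overline{x}}$ reads $\overline{x}^*\overline{\eta}_{\overline{x}}=\overline{\eta}_{\overline{x}}^*\overline{x}$. Conjugating both sides by the unitary $O$ gives $(\overline{x}O)^*(\overline{\eta}_{\overline{x}}O)=O^*(\overline{x}^*\overline{\eta}_{\overline{x}})O=O^*(\overline{\eta}_{\overline{x}}^*\overline{x})O=(\overline{\eta}_{\overline{x}}O)^*(\overline{x}O)$, which is exactly the membership condition for $\mathcal{H}_{\overline{x}O}$.

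Next I would verify the projection condition (ii). The map $R_O$ is a diffeomorphism that, being linear on the ambient vector space $\mathbb{C}^{n\times p}$, coincides with its own differential: $\D R_O(\overline{x})[v]=vO$. Because the group action preserves fibers, $\pi\circ R_O=\pi$; differentiating by the chain rule yields $\D\pi(\overline{x}O)\circ\D R_O(\overline{x})=\D\pi(\overline{x})$. Evaluating at $\overline{\eta}_{\overline{x}}$ then gives $\D\pi(\overline{x}O)[\overline{\eta}_{\overline{x}}O]=\D\pi(\overline{x})[\overline{\eta}_{\overline{x}}]=\eta_x$. With both (i) and (ii) confirmed, uniqueness of the horizontal lift completes the proof.

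I do not expect a genuine obstacle here; the lemma is essentially the statement that the horizontal distribution is equivariant under the structure group $\mathcal{O}_p$. The only step needing care is the middle one: correctly passing from the fiber invariance $\pi\circ R_O=\pi$ to the differential identity $\D\pi(\overline{x}O)\circ\D R_O(\overline{x})=\D\pi(\overline{x})$, and recognizing that $\D R_O$ is simply right multiplication by $O$ because $R_O$ is linear.
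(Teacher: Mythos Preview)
Your argument is correct: you verify that $\overline{\eta}_{\overline{x}}O$ is horizontal at $\overline{x}O$ and projects under $\D\pi(\overline{x}O)$ to $\eta_x$, then invoke uniqueness of the horizontal lift. Each step is sound, including the use of $\pi\circ R_O=\pi$ and the fact that $\D R_O$ is right multiplication by $O$.

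The paper does not actually prove this lemma; it simply cites \cite[Prop.~A.8]{massart_quotient_2020}. Your self-contained verification is therefore more informative than what appears in the paper, and is presumably along the same lines as the cited reference.
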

\begin{proof}
See \cite[Prop.~A.8]{massart_quotient_2020} 
\end{proof}

Recall from \cite[Section 3.6.2]{absil_optimization_2008} that if the expression $g_{\overline{x}}(\overline{\xi}_{\overline{x}},\overline{\zeta}_{\overline{x}})$ does not depend on the choice of $\overline{x}\in \pi^{-1}(\overline{x})$ for every $x\in \mathbb{C}^{n\times p}_*/\mathcal{O}_p$ and every $\xi_{x}, \zeta_{x}\in T_{x}\mathbb{C}^{n\times p}_*/\mathcal{O}_p$, then 
\begin{equation}\label{eqn:riemannian_metric_quotient}
    g_{x}(\xi_{x},\zeta_{x}) := g_{\overline{x}}\left(\overline{\xi}_{\overline{x}},\overline{\zeta}_{\overline{x}}\right)
\end{equation}
defines a Riemannian metric on the quotient manifold $\mathbb{C}^{n\times p}_*/\mathcal{O}_p$. By Lemma \ref{lem:horizontal_lift}, it is straightforward to verify that the Riemannian metric \eqref{eqn:riemannian_metric} on $\mathbb{C}^{n\times p}_*$ induces a Riemannian metric on $\mathbb{C}^{n\times p}_*/\mathcal{O}_p$ defined as \eqref{eqn:riemannian_metric_quotient}. The quotient manifold $\mathbb{C}^{n\times p}_*/\mathcal{O}_p$ endowed with a Riemannian metric defined in (\ref{eqn:riemannian_metric_quotient}) is called a \textit{Riemannian quotient manifold}. By abuse of notation, we use $g$ for denoting Riemannian metrics on both total space $\mathbb{C}^{n\times p}_*$ and quotient space $\mathbb{C}^{n\times p}_*/\mathcal{O}_p$.
\textcolor{black}{This particular metric is also call the Bures-Wasserstein metric  for PSD matrices of fixed-rank \cite{massart2019curvature}.}

\subsection{Riemannian gradient}
The cost function of \eqref{min_prob_BM} induces a cost function on $\mathbb{C}^{n\times p}_*/\mathcal{O}_p$.
\begin{equation}\label{eqn:cost_function_quotient}
\begin{aligned}
    h: \mathbb{C}^{n\times p}_*/\mathcal{O}_p & \rightarrow \mathbb{C}, \qquad 
    x & \mapsto f(\overline{x}). 
\end{aligned}
\end{equation}

That is, $f = h \circ \pi$. Notice when we solve \eqref{min_prob_BM_NS_manifold}, we restrict $f$ on the noncompact Stiefel manifold $\mathbb{C}^{n\times p}_*$, which is a submanifold of $\mathbb{C}^{n\times p}$. Hence the Riemannian gradient of $f$ on $\mathbb{C}^{n\times p}_*$ at $\overline{x}$ is the projection of the Fr\'{e}chet gradient of $f$ on $\mathbb{C}^{n\times p}$, denoted by $\nabla f(\overline{x})$,   onto the tangent space $T_{\overline{x}} \mathbb{C}^{n \times p}_* = \mathbb{C}^{n \times p}$. Since $\nabla f $ is already in $\mathbb{C}^{n\times p}$, the projection is identity. That is,
\begin{equation}\label{eqn:equivalence_grad}
\grad f(\overline{x}) = \nabla f(\overline{x}).     
\end{equation}

\begin{rem}
One can refer to \cite[Appendix A]{zheng2023riemannian} for more details about Fr\'{e}chet derivative. A Fr\'{e}chet gradient for any real-valued function $f(X)$ at $X \in\mathbb{C}^{m\times n}$ can be defined as
\begin{equation}
     \nabla{f}(X) = \nabla{f}_{\Re(X)}(X)+\ci\nabla{f}_{\Im(X)}(X)
     \in \mathbb{C}^{m\times n},
     \label{frechet-derivative-2}
\end{equation} 
where $\nabla{f}_{\Re(X)}(X), \nabla{f}_{\Im(X)}(X) \in \mathbb R^{m\times n}$
are the gradient of  $f$ with respect to the real and imaginary parts of $X$, respectively. 
In particular, for the cost function considered in this paper $f(\overline{x})=\frac12 \|\overline{x}\overline{x}^*-A\|_F^2$, the Fr\'{e}chet gradient \eqref{frechet-derivative-2} becomes
\[ \nabla{f}(\overline{x})=2(\overline{x}\overline{x}^* -A)\overline{x}.\]
\end{rem}

Now consider the Riemannian gradient of $h$ at $x \in \mathbb{C}^{n\times p}_*/\mathcal{O}_p$. $\grad h(x)$ is a tangent vector in $T_{x}\mathbb{C}^{n\times p}_*/\mathcal{O}_p$ . The next theorem shows that the horizontal lift of $\grad h(x)$ can be obtained from the Riemannian gradient of $f$.

\begin{thm}\label{thm:lift_gradient_quotient} The horizontal lift of the Riemannian gradient of $h$ at $\overline{x}$ is the Riemannian gradient of $f$ at $\overline{x}$. That is, 
\[
\overline{\grad h(x) }_{\overline{x}}  = \grad f(\overline{x}).
\]
Therefore, although $\grad f(\overline{x})$ belongs in $\mathbb{C}^{n \times p}$, it is automatically in $\mathcal{H}_{\overline{x}}$. 
\end{thm}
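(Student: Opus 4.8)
The plan is to exploit two facts: that $\pi$ is a Riemannian submersion, and that $f = h\circ\pi$ is constant along each fiber $\pi^{-1}(x)$. I would split the argument into three steps: (i) show that $\grad f(\overline{x})$ is horizontal; (ii) show that its image under $\D\pi(\overline{x})$ equals $\grad h(x)$; and (iii) conclude from the uniqueness of the horizontal lift that $\grad f(\overline{x}) = \overline{\grad h(x)}_{\overline{x}}$.

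For step (i) I would argue that, since $f$ takes the constant value $h(x)$ on the whole fiber $[\overline{x}] = \pi^{-1}(x)$, the directional derivative $\D f(\overline{x})[\overline{\xi}]$ vanishes for every vertical $\overline{\xi}\in\mathcal{V}_{\overline{x}}$, these being exactly the vectors tangent to the fiber. The defining relation $g_{\overline{x}}(\grad f(\overline{x}),\overline{\xi}) = \D f(\overline{x})[\overline{\xi}]$ then forces $\grad f(\overline{x}) \perp \mathcal{V}_{\overline{x}}$, i.e. $\grad f(\overline{x})\in\mathcal{H}_{\overline{x}}$. As a direct cross-check one can use $\grad f(\overline{x}) = 2(\overline{x}\overline{x}^*-A)\overline{x}$ together with the characterization $\mathcal{H}_{\overline{x}} = \{z:\overline{x}^*z = z^*\overline{x}\}$: the product $\overline{x}^*\grad f(\overline{x}) = 2(\overline{x}^*\overline{x}\overline{x}^*\overline{x} - \overline{x}^*A\overline{x})$ is Hermitian because $A$ is Hermitian, so $\grad f(\overline{x})$ indeed lies in the horizontal space.

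For step (ii) I would fix an arbitrary $\xi_x\in T_x\mathbb{C}^{n\times p}_*/\mathcal{O}_p$ with horizontal lift $\overline{\xi}_{\overline{x}}\in\mathcal{H}_{\overline{x}}$. Because $\grad f(\overline{x})$ is horizontal, it is the horizontal lift of $\D\pi(\overline{x})[\grad f(\overline{x})]$, so the submersion definition of the quotient metric \eqref{eqn:riemannian_metric_quotient} yields $g_x\big(\xi_x,\D\pi(\overline{x})[\grad f(\overline{x})]\big) = g_{\overline{x}}\big(\overline{\xi}_{\overline{x}},\grad f(\overline{x})\big)$. Applying the definition of $\grad f$ and the chain rule $\D f(\overline{x}) = \D h(x)\circ\D\pi(\overline{x})$, the right-hand side becomes $\D f(\overline{x})[\overline{\xi}_{\overline{x}}] = \D h(x)[\xi_x]$. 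Hence $g_x(\xi_x,\D\pi(\overline{x})[\grad f(\overline{x})]) = \D h(x)[\xi_x]$ for all $\xi_x$, and uniqueness of the Riemannian gradient of $h$ gives $\D\pi(\overline{x})[\grad f(\overline{x})] = \grad h(x)$.

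Combining these, $\grad f(\overline{x})$ is horizontal and projects under $\D\pi(\overline{x})$ to $\grad h(x)$, so by uniqueness of the horizontal lift it must equal $\overline{\grad h(x)}_{\overline{x}}$, proving the theorem. I expect the main obstacle to be the bookkeeping in step (ii): one must invoke the submersion identity exactly where $\grad f(\overline{x})$ serves as its own horizontal lift and keep careful track of whether each inner product lives on the total space or on the quotient. The legitimacy of this transfer rests on the well-definedness of the quotient metric established earlier via Lemma \ref{lem:horizontal_lift}. Step (i) is the conceptual heart but is essentially immediate from the $\mathcal{O}_p$-invariance of $f$.
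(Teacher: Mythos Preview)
Your proposal is correct and complete. The paper's own proof is simply a citation to \cite[Section 3.6.2]{absil_optimization_2008}, so you have supplied the standard argument that the reference contains: horizontality of $\grad f(\overline{x})$ from fiber-invariance of $f$, identification of $\D\pi(\overline{x})[\grad f(\overline{x})]$ with $\grad h(x)$ via the quotient metric and chain rule, and the conclusion by uniqueness of the horizontal lift. Your explicit Hermitian check using $\overline{x}^*\grad f(\overline{x}) = 2(\overline{x}^*\overline{x}\overline{x}^*\overline{x} - \overline{x}^*A\overline{x})$ is a nice concrete verification not present in the paper.
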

\begin{proof}
See \cite[Section 3.6.2]{absil_optimization_2008}. 
\end{proof}

\subsection{Retraction}
\label{sec:retraction}
The retraction on the quotient manifold $\mathbb{C}^{n\times p}_*/\mathcal{O}_p$ can be defined using the retraction on the total space $\mathbb{C}^{n\times p}_*$.  
Let $Y\in \mathbb{C}^{n\times p}_*$, for any $Z \in \mathbb{C}^{n\times p}$ and a step size $\tau>0$,
\[
\overline{R}_Y(\tau Z):= Y + \tau Z, 
\]
is a retraction on $\mathbb{C}^{n\times p}_*$ if  $Y+\tau Z$ remains full rank, which is ensured for small enough $\tau$. 
Then Lemma \ref{lem:horizontal_lift} indicates that $\overline{R}$ satisfies the conditions of \cite[Prop.~4.1.3]{absil_optimization_2008}, which implies that 
\begin{equation}\label{eqn:retraction_quotient}
    R_{x}(\tau \eta_{x}) := \pi(\overline{R}_{\overline{x}} (\tau  \overline{\eta}_{\overline{x}})) = \pi(\overline{x}+\tau \overline{\eta}_{\overline{x}})
\end{equation}
defines a retraction on the quotient manifold $\mathbb{C}^{n\times p}_*/\mathcal{O}_p$
for a small enough step size $\tau >0.$

\subsection{Vector transport}
We use differentiated retraction as our vector transport \cite[Section 8.1.4]{absil_optimization_2008}.
\begin{equation}\label{eqn:vector_transport}
    \mathcal{T}_{\eta_x}(\xi_x) := \D R_{x}(\eta_x)[\xi_x] = \dt{0}{R_{x}(\eta_x + t\xi_x)}. 
\end{equation}
Notice that 
\begin{eqnarray*}
        \mathcal{T}_{\eta_x}(\xi_x) &=& \D R_{x}(\eta_x)[\xi_x]= \D \pi\left(\overline{R}_{\overline{x}}(\overline{\eta}_{\overline{x}})\right)[\D \overline{R}_{\overline{x}}(\overline{\eta}_{\overline{x}})[\overline{\xi}_{\overline{x}}] ] = \D \pi\left(\overline{x} + \overline{\eta}_{\overline{x}}\right)\left[ \dt{0}{\overline{R}_{\overline{x}}}\left(\overline{\eta}_{\overline{x}} + t\overline{\xi}_{\overline{x}}\right) \right] \\
        &=& \D \pi\left(\overline{x} + \overline{\eta}_{\overline{x}}\right)\left[ \dt{0}{\left(\overline{x} +  \overline{\eta}_{\overline{x}} + t \overline{\xi}_{\overline{x}}  \right)}\right] = \D \pi\left(\overline{x} + \overline{\eta}_{\overline{x}}\right)\left[  \overline{\xi}_{\overline{x}} \right]= \D \pi\left(\overline{x} + \overline{\eta}_{\overline{x}}\right)\left[  P^\mathcal{H}_{\overline{x} + \overline{\eta}_{\overline{x}}}\left(  \overline{\xi}_{\overline{x}} \right) \right]. 
    \end{eqnarray*}
    
Hence the horizontal lift of a transported vector is simply the projection of the original horizontal lift to the new horizontal space, as shown in the following formula.
\begin{equation}\label{eqn:vector_transport_lift}
\overline{\mathcal{T}_{\eta_x}(\xi_x) }_{\overline{x}+\overline{\eta}_{\overline{x}}} = P^\mathcal{H}_{\overline{x}+\overline{\eta}_{\overline{x}}}(\overline{\xi}_{\overline{x}}).
\end{equation}

\section{The Conjugate Gradient Methods}
 \label{sec-cg}

We first recall the traditional conjugate gradient method for solving \eqref{min_prob_BM}, which is summarized as Algorithm \ref{alg:CG}. We present the abstract Riemannian conjugate gradient method for solving \eqref{min_prob_quotient_manifold} over the quotient manifold as Algorithm \ref{alg:RCG}, with Wolfe conditions
\begin{equation}\label{wolfe_1}
    h(R_{x_k}(\alpha_k \eta_k)) \leq h(x_k) +c_1 \alpha_k g_{x_k}(\grad h(x_k), \eta_k),
\end{equation}
\begin{equation}\label{wolfe_2}
    \abs{g_{R_{x_k}(\alpha_k \eta_k)}(\grad h(R_{x_k}(\alpha_k \eta_k)), \D R_{x_k}(\alpha_k \eta_k) [\eta_k])}\leq c_2 \abs{g_{x_k}(\grad h(x_k), \eta_k)}. 
\end{equation}
$0< c_1 < c_2 < 1$.

The abstract Algorithm \ref{alg:RCG} can be implemented as Algorithm \ref{alg:RCG_implement}, in which each tangent vector is treated as horizontal lift and each iterate is a representative of its equivalence class, and it is independent of the choice of the representative of the equivalent class.

\begin{algorithm}[htbp]
\caption{(Polak–Ribi\'{e}re or   Fletcher-Reeves) Conjugate Gradient on $\mathbb{C}^{n\times p}$}\label{alg:CG}
\begin{algorithmic}[1]
\Require initial iterate ${x}_0 \in \mathbb{C}^{n\times p}$, tolerance $\varepsilon>0$, initial descent direction as negative gradient $\eta_0 = -\nabla f(x_0) = -2(x_0x_0^* - A)x_0$
\For{ $k =0,1,2,\dots$}
    \State{Use backtracking to compute the step size $\alpha_k >0$ satisfying the strong Wolfe conditions}
    \State{Obtain the new iterate by }
    \[
    x_{k+1} = x_k + \alpha_k \eta_k
    \]
    \State{Compute the gradient}
    \par\hskip\algorithmicindent ${\xi}_{k+1} :={\nabla f(x_{k+1})}$
    \State{Check for convergence}
     \par\hskip\algorithmicindent if $\norm{{\xi}_{k+1}}_F < \varepsilon $, then break 
    \State Compute a conjugate direction by the Polak–Ribi\'{e}re method or the  Fletcher-Reeves method
    \par\hskip\algorithmicindent ${\eta}_{k+1}= -{\xi}_{k+1} + \beta_{k+1} \eta_{k}$
    \[  \text{ where }  \beta_{k+1}           = \left\{
    \begin{aligned}
    & \max\left(0, \frac{\ip{\nabla f({x}_{k+1}), \nabla f({x}_{k+1})-\nabla f(x_k)}}{\ip{ \nabla f({x}_{k}), \nabla f({x}_{k}) }}\right) & \quad \mbox{if using Polak–Ribi\'{e}re} \\ 
    & \frac{\ip{\nabla f({x}_{k+1}), \nabla f({x}_{k+1})}}{\ip{ \nabla f({x}_{k}), \nabla f({x}_{k})}} &  \quad \mbox{if using Fletcher-Reeves.}
    \end{aligned}
    \right.
    \]
\EndFor
\end{algorithmic}
\end{algorithm}

\begin{algorithm}[htbp]
\caption{Riemannian Conjugate Gradient on the quotient manifold $\mathbb{C}^{n\times p}_*/\mathcal{O}_p$ with metric $g$}\label{alg:RCG}
\begin{algorithmic}[1]
\Require initial iterate ${x}_0 \in \mathbb{C}^{n\times p}_*/\mathcal{O}_p$, tolerance $\varepsilon>0$, tangent vector $\eta_0 = -\grad h(x_0)$
\For{ $k =0,1,2,\dots$}
    \State{Compute the step size $\alpha_k >0$ satisfying the strong Wolfe conditions \eqref{wolfe_1} and \eqref{wolfe_2}}
    \State{Obtain the new iterate by retraction}
    \[
    x_{k+1} = R_{x_k}(\alpha_k \eta_k)
    \]
    \State{Compute the gradient}
    \par\hskip\algorithmicindent ${\xi}_{k+1} :={\grad h(x_{k+1})}$ 
    \State{Check for convergence}
     \par\hskip\algorithmicindent if $\norm{{\xi}_{k+1}}:=\sqrt{g_{{x}_{k+1}}({\xi}_{k+1}, {\xi}_{k+1})} < \varepsilon $, then break 
    \State Compute a conjugate direction by the Polak–Ribi\'{e}re (PR$_+$) method or the  Fletcher-Reeves (FR) method, and vector transport
    \par\hskip\algorithmicindent ${\eta}_{k+1} = -{\xi}_{k+1} + \beta_{k+1} \mathcal{T}_{\alpha_{k} \eta_{k}}(\eta_{k})$
    \[  \text{ where }  \beta_{k+1} = \left\{
    \begin{aligned}
    & \max\left(0, \frac{g_{{x}_{k+1}}\left(\grad h({x}_{k+1}), \grad h({x}_{k+1})-\mathcal{T}_{\alpha_{k} \eta_{k}}(\xi_{k})\right)}{g_{{x}_{k}}\left( \grad h({x}_{k}), \grad h({x}_{k}) \right)}\right) & \quad \mbox{PR}_+ \\ 
    & \frac{g_{{x}_{k+1}}\left(\grad h({x}_{k+1}), \grad h({x}_{k+1})\right)}{g_{{x}_{k}}\left( \grad h({x}_{k}), \grad h({x}_{k}) \right)} &  \quad \mbox{FR}
    \end{aligned}
    \right.
    \]
\EndFor
\end{algorithmic}
\end{algorithm}

\begin{algorithm}[H]
\caption{Implementation for Riemannian Conjugate Gradient on the quotient manifold $\mathbb{C}^{n\times p}_*/\mathcal{O}_p$ with metric $g$}\label{alg:RCG_implement}
\begin{algorithmic}[1]
\Require initial iterate $\overline{x}_0 \in \mathbb{C}^{n\times p}_*$, tolerance $\varepsilon>0$, initial descent direction as $\overline{\eta}_0 = -\grad f(\overline{x}_0) = -2(\overline{x}_0\overline{x}_0^* - A)\overline{x}_0$
\For{ $k =0,1,2,\dots$}
    \State{Compute the step size $\alpha_k >0$ satisfying the strong Wolfe conditions}
    \State{Obtain the new iterate by retraction}
    \[
    \overline{x}_{k+1} = \overline{R}_{\overline{x}_k}(\alpha_k \overline{\eta}_k) = \overline{x}_k + \alpha_k \overline{\eta}_k
    \]
    \State{Compute the horizontal lift of gradient}
    \par\hskip\algorithmicindent $\overline{\xi}_{k+1} :={\grad f(\overline{x}_{k+1})} = 2(\overline{x}_{k+1} \overline{x}_{k+1}^* - A)\overline{x}_{k+1}$  
    \State{Check for convergence}
     \par\hskip\algorithmicindent if $\norm{{\overline{\xi}}_{k+1}}:=\sqrt{g_{\overline{x}_{k+1}}(\overline{\xi}_{k+1}, \overline{\xi}_{k+1})} < \varepsilon $, then break 
    \State Compute a conjugate direction by $\mbox{PR}_+$ or by $\mbox{FR}$ and vector transport
    \par\hskip\algorithmicindent $\overline{\eta}_{k+1} = -\overline{\xi}_{k+1} + \beta_{k+1} \overline{\mathcal{T}_{\alpha_{k} \eta_{k}}(\eta_{k})}_{\overline{x}_{k+1}}$
    \[  \text{ where }  \beta_{k+1} = \left\{
    \begin{aligned}
    & \max\left(0, \frac{g_{\overline{x}_{k+1}}\left(\grad f(\overline{x}_{k+1}), \grad f(\overline{x}_{k+1})-\overline{\mathcal{T}_{\alpha_{k} \eta_{k}}(\xi_{k})}_{\overline{x}_{k+1}}\right)}{g_{\overline{x}_{k}}\left( \grad f(\overline{x}_{k}), \grad f(\overline{x}_{k}) \right)}\right) & \quad \mbox{PR}_+ \\ 
    & \frac{g_{\overline{x}_{k+1}}\left(\grad f(\overline{x}_{k+1}), \grad f(\overline{x}_{k+1})\right)}{g_{\overline{x}_{k}}\left( \grad f(\overline{x}_{k}), \grad f(\overline{x}_{k}) \right)} &  \quad \mbox{FR}
    \end{aligned}
    \right.
    \]
\EndFor
\end{algorithmic}
\end{algorithm}
The following results were first proven in \cite{zheng2023riemannian}. For completeness, we include a detailed proof.
\begin{lem}\label{lem:equivalence_eta}
Let $\eta_k$ be the descent direction generated by Algorithm \ref{alg:RCG}. Then we have 
\begin{equation}
    \overline{\mathcal{T}_{\alpha_{k} \eta_{k}}(\eta_{k})}_{\overline{x}_{k+1}} = P^\mathcal{H}_{\overline{x}_k + \alpha_k \overline{\eta}_k} \left( \overline{\eta}_k \right) = \overline{\eta}_k. 
\end{equation}
\end{lem}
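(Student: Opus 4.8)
The plan is to split the claimed chain of equalities into its two pieces and treat them separately. For the first equality, I would simply specialize the general vector-transport formula \eqref{eqn:vector_transport_lift}, which was derived above for an arbitrary tangent vector $\xi_x$ transported along $\eta_x$. Setting $\eta_x = \alpha_k\eta_k$ (whose horizontal lift at $\overline{x}_k$ is $\alpha_k\overline{\eta}_k$) and $\xi_x = \eta_k$ (with horizontal lift $\overline{\eta}_k$), the base point $\overline{x}+\overline{\eta}_{\overline{x}}$ in \eqref{eqn:vector_transport_lift} becomes $\overline{x}_k+\alpha_k\overline{\eta}_k = \overline{x}_{k+1}$, which yields precisely $\overline{\mathcal{T}_{\alpha_k\eta_k}(\eta_k)}_{\overline{x}_{k+1}} = P^\mathcal{H}_{\overline{x}_k+\alpha_k\overline{\eta}_k}(\overline{\eta}_k)$. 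This piece is immediate and requires no new computation.

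The second, substantive equality $P^\mathcal{H}_{\overline{x}_k+\alpha_k\overline{\eta}_k}(\overline{\eta}_k) = \overline{\eta}_k$ asserts that $\overline{\eta}_k$ already lies in the horizontal space at the updated point $\overline{x}_{k+1}$, so that the projection acts as the identity. The key step is to verify the membership $\overline{\eta}_k \in \mathcal{H}_{\overline{x}_{k+1}}$ via the explicit characterization $\mathcal{H}_{\overline{x}} = \{z : \overline{x}^*z = z^*\overline{x}\}$, i.e.\ the condition that $\overline{x}^*z$ be Hermitian. I would compute $\overline{x}_{k+1}^*\overline{\eta}_k = (\overline{x}_k+\alpha_k\overline{\eta}_k)^*\overline{\eta}_k = \overline{x}_k^*\overline{\eta}_k + \alpha_k\overline{\eta}_k^*\overline{\eta}_k$ and show each summand is Hermitian: the second, $\overline{\eta}_k^*\overline{\eta}_k$, is manifestly Hermitian and $\alpha_k$ is real, while the first, $\overline{x}_k^*\overline{\eta}_k$, is Hermitian precisely because $\overline{\eta}_k$ is the horizontal lift of a quotient tangent vector at $\overline{x}_k$ and hence lies in $\mathcal{H}_{\overline{x}_k}$, so that $\overline{x}_k^*\overline{\eta}_k = \overline{\eta}_k^*\overline{x}_k = (\overline{x}_k^*\overline{\eta}_k)^*$. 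A sum of Hermitian matrices being Hermitian then delivers the membership, and therefore the identity projection.

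The only input that must be nailed down carefully is $\overline{\eta}_k \in \mathcal{H}_{\overline{x}_k}$. I would observe that this holds by construction: every search direction produced by Algorithm \ref{alg:RCG} is a quotient tangent vector, so its horizontal lift lies in the horizontal space at the current iterate by the very definition of the horizontal lift. Equivalently, one can argue inductively, since $\overline{\eta}_0 = -\grad f(\overline{x}_0)$ is horizontal by Theorem \ref{thm:lift_gradient_quotient}, and each $\overline{\eta}_{k+1}$ is a real linear combination of the horizontal gradient lift and the transported previous direction. I do not anticipate a genuine obstacle here; the subtlety, if any, is purely bookkeeping, namely correctly matching the roles of the transport direction $\alpha_k\eta_k$ and the transported vector $\eta_k$ in \eqref{eqn:vector_transport_lift}, and using that $\alpha_k$ is real so it passes transparently through the Hermitian-conjugation argument.
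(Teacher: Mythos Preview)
Your proposal is correct and in fact takes a cleaner route than the paper. Both proofs reduce the second equality to showing that $\overline{x}_{k+1}^*\overline{\eta}_k$ is Hermitian, and both observe that $\overline{x}_{k+1}^*\overline{\eta}_k = \overline{x}_k^*\overline{\eta}_k + \alpha_k\overline{\eta}_k^*\overline{\eta}_k$, so it suffices to show $\overline{x}_k^*\overline{\eta}_k$ is Hermitian. The paper establishes this by induction on $k$, expanding $\overline{\eta}_k = -\overline{\xi}_k + \beta_k P^{\mathcal{H}}_{\overline{x}_k}(\overline{\eta}_{k-1})$ and plugging in the explicit gradient $\overline{\xi}_k = 2(\overline{x}_k\overline{x}_k^*-A)\overline{x}_k$ to verify $\overline{x}_k^*\overline{\xi}_k$ is Hermitian by hand. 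You instead invoke directly that $\overline{\eta}_k$, being the horizontal lift of the quotient tangent vector $\eta_k$ at $\overline{x}_k$, lies in $\mathcal{H}_{\overline{x}_k}$ by definition, so $\overline{x}_k^*\overline{\eta}_k$ is Hermitian with no computation. Your argument is more economical and, notably, does not use the specific form of the cost function $f$; the paper's inductive verification is essentially re-deriving a special case of Theorem~\ref{thm:lift_gradient_quotient} (that the gradient lift is horizontal) together with the tautology that horizontal lifts are horizontal. The induction in the paper does, however, make the argument self-contained at the level of the concrete matrices appearing in Algorithm~\ref{alg:RCG_implement}, which may have been the authors' intent in view of the equivalence Theorem~\ref{thm-1} that follows.
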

\begin{proof}
The first equality follows from \eqref{eqn:vector_transport_lift}.  Recall the projection formula given in proposition \ref{prop:projections_quotient}. Denote $\overline{x}_{k+1} = \overline{x}_k + \alpha_k \overline{\eta}_k$. Then  we have 
\begin{equation}
P^\mathcal{H}_{\overline{x}_k + \alpha_k \overline{\eta}_k} \left( \overline{\eta}_k \right) = \overline{\eta}_k - \overline{x}_{k+1}\Omega_k. 
\end{equation}
Hence in order to show $P^\mathcal{H}_{\overline{x}_k + \alpha_k \overline{\eta}_k} \left( \overline{\eta}_k \right) = \overline{\eta}_k$,  it is equivalent to show  the Lyapunov equation 
\begin{equation}\label{eqn:Lyapunov}
    \Omega_k \overline{x}_{k+1}^*\overline{x}_{k+1} + \overline{x}_{k+1}^*\overline{x}_{k+1} \Omega_k = \overline{x}_{k+1}^* \overline{\eta}_k- \overline{\eta}_k^* \overline{x}_{k+1}
\end{equation}
only has trivial solution $\Omega_k = 0$ for all $k \geq 0$. 

The solution $X$ to the Lyapunov equation $XE +EX = Z$ for a Hermitian $E$ is unique if $E$ is Hermitian positive-definite \cite[Section 2.2]{massart_quotient_2020}. Thus \eqref{eqn:Lyapunov} has a unique solution if $\bar x_{k+1}\in \mathbb{C}_*^{n\times p}.$
Thus we only need to show the right-hand side of the equation is zero. We prove this by induction. 

When $k =0$, the right hand side of \eqref{eqn:Lyapunov} is 
\begin{eqnarray*}
    \overline{x}_{1}^* \overline{\eta}_0- \overline{\eta}_0^* \overline{x}_{1} &=& (\overline{x}_0 + \alpha_0 \overline{\eta}_0)^* \overline{\eta}_0 -\overline{\eta}_0 ^*(\overline{x}_0 + \alpha_0 \overline{\eta}_0)  = \overline{x}_0 ^* \overline{\eta}_0  -\overline{\eta}_0^* \overline{x}_0= -2\overline{x}_0^* (\overline{x}_0 \overline{x}_0 ^* - A)\overline{x}_0 + 2\overline{x}_0 ^*(\overline{x}_0 \overline{x}_0 ^* -A^*)\overline{x}_0  = 0. 
\end{eqnarray*}

Now suppose $\overline{x}_k^* \overline{\eta}_{k-1} -\overline{\eta}_{k-1}^* \overline{x}_k =0$ and hence $P^\mathcal{H}_{\overline{x}_{k}}(\overline{\eta}_{k-1})= \overline{\eta}_{k-1}$. Then 
\begin{eqnarray*}
    \overline{x}_{k+1}^* \overline{\eta}_{k} -\overline{\eta}_{k}^* \overline{x}_{k+1} &=& (\overline{x}_k + \alpha_k \overline{\eta}_k)^*\overline{\eta}_k - \overline{\eta}_k^* (\overline{x}_k + \alpha_k \overline{\eta}_k) = \overline{x}_k^*\overline{\eta}_k - \overline{\eta}_k^* \overline{x}_k \\
    &=& \overline{x}_k^*\left(-\overline{\xi}_k + \beta_k P^\mathcal{H}_{\overline{x}_{k}}(\overline{\eta}_{k-1}) \right)  - \left(-\overline{\xi}_k + \beta_k P^\mathcal{H}_{\overline{x}_{k}}(\overline{\eta}_{k-1}) \right)^* \overline{x}_k = \overline{x}_k^*\left(-\overline{\xi}_k + \beta_k \overline{\eta}_{k-1} \right)  - \left(-\overline{\xi}_k + \beta_k \overline{\eta}_{k-1} \right)^* \overline{x}_k \\
    &=& - \overline{x}_k^* \overline{\xi}_k + \overline{\xi}_k^* \overline{x}_k = -2\overline{x}_k^*(\overline{x}_k\overline{x}_k^* - A)\overline{x}_k + 2\overline{x}_k^*(\overline{x}_k\overline{x}_k^* - A^*)\overline{x}_k= 0. 
\end{eqnarray*}
Hence $P^\mathcal{H}_{\overline{x}_{k+1}}(\overline{\eta}_{k})= \overline{\eta}_{k}$ also holds and we have proved this lemma. 
\end{proof}

We can now state our first main result:

\begin{thm}
\label{thm-1}
Algorithm \ref{alg:RCG_implement}  is equivalent to  Algorithm \ref{alg:CG}, which is the conjugate gradient method solving \eqref{min_prob_BM}, in the sense that they produce exactly the same iterates if started from the same initial point. 
\end{thm}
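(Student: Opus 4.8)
The plan is to prove that the two algorithms produce identical iterates by induction on the iteration index $k$, exploiting the fact that every object appearing in Algorithm \ref{alg:RCG_implement} reduces on the total space to its Euclidean counterpart in Algorithm \ref{alg:CG}. The base case is immediate: if $\overline{x}_0 = x_0$, then the initial directions agree because $\grad f(\overline{x}_0) = \nabla f(\overline{x}_0)$ by \eqref{eqn:equivalence_grad}, so $\overline{\eta}_0 = \eta_0$. For the inductive step I would assume $\overline{x}_k = x_k$ and $\overline{\eta}_k = \eta_k$ and then establish the same identities for index $k+1$, proceeding through the iterate update, the step size, the gradient, the conjugacy coefficient $\beta_{k+1}$, and the new search direction in turn.

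First, the iterate update coincides because the retraction on the total space is Euclidean addition, $\overline{R}_{\overline{x}_k}(\alpha_k\overline{\eta}_k) = \overline{x}_k + \alpha_k\overline{\eta}_k$ by \eqref{eqn:retraction_quotient}, so $\overline{x}_{k+1} = x_{k+1}$ once the step sizes match, and then $\overline{\xi}_{k+1} = \grad f(\overline{x}_{k+1}) = \nabla f(x_{k+1}) = \xi_{k+1}$. To see the step sizes match, I would observe that the Wolfe conditions \eqref{wolfe_1} and \eqref{wolfe_2} for $h$ collapse to the Euclidean strong Wolfe conditions for $f$: the cost values agree since $h(R_{x_k}(\cdot)) = f(\overline{x}_k + \cdot)$, the metric $g$ on the total space is the real inner product $\ip{\cdot,\cdot}_{\mathbb{C}^{n\times p}}$ by \eqref{eqn:riemannian_metric}, and the directional-derivative term in \eqref{wolfe_2} lifts (via the differentiated retraction) to $g_{\overline{x}_{k+1}}(\grad f(\overline{x}_{k+1}), P^{\mathcal H}_{\overline{x}_{k+1}}(\overline{\eta}_k))$, which by Lemma \ref{lem:equivalence_eta} equals $g_{\overline{x}_{k+1}}(\grad f(\overline{x}_{k+1}), \overline{\eta}_k) = \ip{\nabla f(x_{k+1}), \eta_k}$. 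Hence a common backtracking procedure returns the same $\alpha_k$.

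The central step is matching $\beta_{k+1}$ and the resulting direction. The Fletcher--Reeves coefficient is immediate, since it involves only gradient norms and $g$ on the total space equals $\ip{\cdot,\cdot}_{\mathbb{C}^{n\times p}}$. The new direction then matches once the coefficients agree, because Lemma \ref{lem:equivalence_eta} gives $\overline{\mathcal{T}_{\alpha_k\eta_k}(\eta_k)}_{\overline{x}_{k+1}} = \overline{\eta}_k$, so $\overline{\eta}_{k+1} = -\overline{\xi}_{k+1} + \beta_{k+1}\overline{\eta}_k$ has exactly the Euclidean form. I expect the main obstacle to be the Polak--Ribi\'ere numerator, where the transported previous gradient $\overline{\mathcal{T}_{\alpha_k\eta_k}(\xi_k)}_{\overline{x}_{k+1}} = P^{\mathcal H}_{\overline{x}_{k+1}}(\overline{\xi}_k)$ appears; here $\overline{\xi}_k$ need not lie in $\mathcal H_{\overline{x}_{k+1}}$, so this transported vector genuinely differs from $\overline{\xi}_k = \nabla f(x_k)$, and one cannot argue as for $\overline{\eta}_k$.

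To overcome this I would not try to show the transported vector equals $\overline{\xi}_k$, but rather use that its inner product against $\grad f(\overline{x}_{k+1})$ is unaffected by the projection. Since $P^{\mathcal H}_{\overline{x}_{k+1}}$ is the orthogonal projection onto $\mathcal H_{\overline{x}_{k+1}}$ it is self-adjoint with respect to $g$, and $\grad f(\overline{x}_{k+1})$ already lies in $\mathcal H_{\overline{x}_{k+1}}$ by Theorem \ref{thm:lift_gradient_quotient}; therefore
$$
g_{\overline{x}_{k+1}}\!\left(\grad f(\overline{x}_{k+1}), P^{\mathcal H}_{\overline{x}_{k+1}}(\overline{\xi}_k)\right)
= g_{\overline{x}_{k+1}}\!\left(\grad f(\overline{x}_{k+1}), \overline{\xi}_k\right)
= \ip{\nabla f(x_{k+1}), \nabla f(x_k)}.
$$
Substituting this into the Riemannian $\mathrm{PR}_+$ numerator collapses it to $\ip{\nabla f(x_{k+1}), \nabla f(x_{k+1}) - \nabla f(x_k)}$, while the denominator is $\ip{\nabla f(x_k), \nabla f(x_k)}$, so $\beta_{k+1}$ agrees with Algorithm \ref{alg:CG} in both the $\mathrm{PR}_+$ and Fletcher--Reeves variants. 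Combined with the direction identity above this yields $\overline{\eta}_{k+1} = \eta_{k+1}$, closing the induction and completing the proof.
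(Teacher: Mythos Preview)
Your proposal is correct and follows essentially the same route as the paper's proof: both rely on \eqref{eqn:equivalence_grad} for the gradients, Lemma \ref{lem:equivalence_eta} for the transported search direction, and the identification of $g$ with $\ip{\cdot,\cdot}_{\mathbb{C}^{n\times p}}$ on the total space. Your treatment is more explicit; in particular, your handling of the $\mathrm{PR}_+$ numerator via the self-adjointness of $P^{\mathcal H}_{\overline{x}_{k+1}}$ together with Theorem \ref{thm:lift_gradient_quotient} spells out a point the paper's very terse proof leaves implicit, since Lemma \ref{lem:equivalence_eta} addresses only the transport of $\eta_k$, not of $\xi_k$.
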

\begin{proof}
    By \eqref{eqn:equivalence_grad}, the gradients generated by Algorithm
    \ref{alg:CG} and  Algorithm \ref{alg:RCG_implement} are the same.  By Lemma \ref{lem:equivalence_eta} and the equivalence between the Riemannian metric on $\mathbb{C}^{n\times p}_*$ and the inner product on $\mathbb{C}^{n\times p}$, we see that $\beta_k$ generated by these two algorithms are also equivalent. Hence the conjugate directions are also the same. So the two algorithms generate the same iterates. 
\end{proof}

\section{The Convergence of the Fletcher-Reeves Conjugate Gradient Method}
\label{sec-proof}

In this section, we will prove that the Riemannian 
Fletcher-Reeves Conjugate Gradient method converges to a stationary point thus Algorithm \ref{alg:CG}  also converges by the equivalence Theorem \ref{thm-1}.

The discussion in this section follows the same lines as in standard convergence theory, e.g.,  \cite{sato2015new}. The cost function and vector transport considered in this paper satisfy the conditions for convergence analysis in \cite{sato2015new}. \textcolor{black}{Many results in this section are standard convergence results for a line search method, see \cite {nocedal1999numerical}.} For completeness, we include the full proof.

Let $\eta_k \in T_{x_k}\mathbb{C}^{n\times p}_*/\mathcal{O}_p$ be a descent direction. Define the angle $\theta_k$ between $-\grad h(x_k)$ and $\eta_k$ by 
\begin{equation}\label{eqn:theta_k}
    \cos \theta_k = - \frac{g_{x_k }\left(\grad h(x_k), \eta_k \right)}{\norm{\grad h(x_k) }_{x_k}\norm{\eta_k}_{x_k}}.
\end{equation}

Let $\mathcal{L} := \{x \in \mathbb{C}^{n\times p}_*/\mathcal{O}_p: 0 \leq h(x)  \leq h(x_0) \}$ and  $\pi^{-1}(\mathcal{L}) = \{\overline{x} \in \mathbb{C}^{n\times p}_*: 0 \leq f(\overline{x})  \leq f(\overline{x}_0) \}$. We can show that $\pi^{-1}(\mathcal{L})$ is bounded. 
\begin{lem}
    There is a constant $C$ such that $\|\bar x\|_F\leq C,\quad \forall \bar x \in \pi^{-1}(\mathcal L)$.
\end{lem}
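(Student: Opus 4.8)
The plan is to exploit the coercivity of $f$ on rank-$p$ matrices, the point being that the fixed-rank structure forces control of $\norm{\overline{x}}_F$ from control of $\norm{\overline{x}\overline{x}^*}_F$. First I would translate the sublevel-set condition into a bound on $\overline{x}\overline{x}^*$. If $\overline{x}\in\pi^{-1}(\mathcal{L})$ then $f(\overline{x})\le f(\overline{x}_0)$, so $\norm{\overline{x}\overline{x}^*-A}_F\le\sqrt{2f(\overline{x}_0)}=:c_0$, and the triangle inequality gives $\norm{\overline{x}\overline{x}^*}_F\le\norm{A}_F+c_0$. This is the easy half: the cost being a squared distance to $A$ immediately confines $\overline{x}\overline{x}^*$ to a Frobenius ball.

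The key step is to convert this Frobenius bound on the Gram matrix into a Frobenius bound on $\overline{x}$ itself, which is exactly where the rank constraint enters. Write $M:=\overline{x}\overline{x}^*$, a Hermitian PSD matrix of rank at most $p$ (here $\overline{x}\in\mathbb{C}^{n\times p}_*$ has full column rank $p$), with nonzero eigenvalues $\sigma_1,\dots,\sigma_p\ge 0$, namely the squared singular values of $\overline{x}$. Then $\norm{\overline{x}}_F^2=\tr(\overline{x}^*\overline{x})=\tr(M)=\sum_{i=1}^p\sigma_i$, whereas $\norm{M}_F^2=\tr(M^2)=\sum_{i=1}^p\sigma_i^2$. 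Applying the Cauchy–Schwarz inequality to the $p$-term sum gives $\sum_{i=1}^p\sigma_i\le\sqrt{p}\,\bigl(\sum_{i=1}^p\sigma_i^2\bigr)^{1/2}=\sqrt{p}\,\norm{M}_F$. Combining the two identities yields $\norm{\overline{x}}_F^2=\tr(M)\le\sqrt{p}\,\norm{\overline{x}\overline{x}^*}_F\le\sqrt{p}\,(\norm{A}_F+c_0)$, so the constant $C:=\bigl(\sqrt{p}\,(\norm{A}_F+\sqrt{2f(\overline{x}_0)})\bigr)^{1/2}$ works uniformly over $\pi^{-1}(\mathcal{L})$.

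I do not expect a genuine obstacle here, since this is a routine coercivity argument, but I would flag the one place where care is needed: the passage from $\norm{M}_F$ to $\tr(M)$ relies essentially on the rank-$p$ bound. Without it, a PSD matrix of unit Frobenius norm can have arbitrarily large trace (many small eigenvalues), so no such estimate could hold; it is precisely the Burer–Monteiro rank-$p$ factorization that makes $\norm{\overline{x}}_F$ comparable to $\norm{\overline{x}\overline{x}^*}_F^{1/2}$ and hence renders the sublevel set bounded. Note also that the bound is stated in terms of $\norm{\overline{x}}_F$ on representatives, which is consistent with the quotient structure because $\norm{\overline{x}O}_F=\norm{\overline{x}}_F$ for every $O\in\mathcal{O}_p$, so boundedness of $\pi^{-1}(\mathcal{L})$ is well defined independently of the chosen representative.
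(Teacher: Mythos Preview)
Your proof is correct and takes a different, more constructive route than the paper. The paper argues by contradiction: assuming an unbounded sequence $\overline{x}_k$ with $\norm{\overline{x}_k}_F\ge k$, it normalizes $y_k=\overline{x}_k/\norm{\overline{x}_k}_F$ and observes that $f(\overline{x}_k)=\tfrac12\norm{a_k^2\,y_ky_k^*-A}_F^2\to\infty$, contradicting the sublevel bound. Your direct argument via the triangle inequality and Cauchy--Schwarz on the eigenvalues of $M=\overline{x}\overline{x}^*$ is cleaner and yields the explicit constant $C=\bigl(\sqrt{p}(\norm{A}_F+\sqrt{2f(\overline{x}_0)})\bigr)^{1/2}$. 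In fact your inequality $\tr(M)\le\sqrt{p}\,\norm{M}_F$ is precisely what the paper's argument needs (but does not spell out) to justify that $\norm{y_ky_k^*}_F\ge 1/\sqrt{p}$ stays bounded away from zero, so that $a_k^2\,\norm{y_ky_k^*}_F$ indeed dominates $\norm{A}_F$.

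One small inaccuracy in your commentary: the claim that ``without [the rank bound], a PSD matrix of unit Frobenius norm can have arbitrarily large trace'' is not true in this fixed-$n$ setting, since $\tr(M)\le\sqrt{n}\,\norm{M}_F$ always holds; and the rank-$p$ bound on $M$ comes automatically from the shape $\overline{x}\in\mathbb{C}^{n\times p}$, not from the full-rank condition $\overline{x}\in\mathbb{C}^{n\times p}_*$. This does not affect the validity of your proof, only the surrounding narrative.
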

\begin{proof}
Assume it is not true, then $\forall n\in \mathbbm N, \exists \bar x_n\in \pi^{-1}(\mathcal L)$ such that $\|\bar x_n\|_F\geq n.$ Let $y_n=\frac{\bar x_n}{\|\bar x_n\|_F}$, then $\|y_n\|_F=1$ and $\bar x_n= \|\bar x_n\|_F y_n=a_n y_n$ with $a_n\geq n$. Thus
$f(\bar x_n)=\frac12 \|a^2_n y_n y_n^*-A\|_F^2\to \infty$ since $a_n\to \infty$ and $\|y_n\|_F=1.$ On the other hand, $\bar x_n \in \pi^{-1}(\mathcal L)$ implies that $f(\bar x_n)$ should be bounded, which is a contradiction. 
    \end{proof}

\begin{lem} \label{lem:gradient_lipschitz}
The Riemannian gradient of $f$, i.e., 
    $\grad f(\overline{x}) = 2(\overline{x}\overline{x}^* - A)\overline{x}$ is  Lipschitz continuous on $\pi^{-1}(\mathcal{L})$. That is, there exists a constant $L >0$ such that
    \begin{equation}
        \norm{\grad f(\overline{y}) - \grad f(\overline{x})}_F \leq L\norm{\overline{y} - \overline{x}}_F, \quad \text{ for all } \overline{x}, \overline{y} \in \pi^{-1}(\mathcal{L}).
    \end{equation}
\end{lem}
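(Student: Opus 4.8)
The plan is to exploit that $\grad f(\overline{x}) = 2(\overline{x}\overline{x}^* - A)\overline{x}$ is a cubic polynomial in the entries of $\overline{x}$, hence smooth, and therefore Lipschitz on any \emph{bounded} set; the preceding lemma supplies exactly the bound $\norm{\overline{x}}_F \leq C$ on $\pi^{-1}(\mathcal{L})$ that is needed to make the estimate global on this set. First I would split the difference of gradients into its cubic and affine parts,
\[
\grad f(\overline{y}) - \grad f(\overline{x}) = 2\left(\overline{y}\overline{y}^*\overline{y} - \overline{x}\overline{x}^*\overline{x}\right) - 2A(\overline{y} - \overline{x}),
\]
and estimate each separately, using throughout only the submultiplicativity of the Frobenius norm, $\norm{MN}_F \leq \norm{M}_F\norm{N}_F$, together with its invariance under conjugate transpose.

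The affine term is immediate: $\norm{2A(\overline{y}-\overline{x})}_F \leq 2\norm{A}_F \norm{\overline{y}-\overline{x}}_F$. The cubic term is the crux, and the key device is a telescoping decomposition that linearizes the difference of triple products,
\[
\overline{y}\overline{y}^*\overline{y} - \overline{x}\overline{x}^*\overline{x} = \overline{y}\overline{y}^*(\overline{y} - \overline{x}) + \overline{y}(\overline{y}^* - \overline{x}^*)\overline{x} + (\overline{y} - \overline{x})\overline{x}^*\overline{x}.
\]
Each of the three summands contains exactly one factor of the form $\overline{y}-\overline{x}$ (or its conjugate transpose) flanked by two factors drawn from $\{\overline{x},\overline{y},\overline{x}^*,\overline{y}^*\}$; submultiplicativity therefore bounds each summand by a product of two norms $\norm{\cdot}_F$ times $\norm{\overline{y}-\overline{x}}_F$. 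Since both $\overline{x},\overline{y}\in\pi^{-1}(\mathcal{L})$ satisfy $\norm{\cdot}_F\leq C$ by the previous lemma, each summand is at most $C^2\norm{\overline{y}-\overline{x}}_F$, so the cubic part contributes at most $6C^2\norm{\overline{y}-\overline{x}}_F$.

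Collecting the two estimates yields the claim with $L = 6C^2 + 2\norm{A}_F$. I do not anticipate any genuine obstacle in the calculation itself; the only point worth emphasizing is conceptual, namely that the cubic growth of $\grad f$ rules out a global Lipschitz constant on all of $\mathbb{C}^{n\times p}_*$. Thus the boundedness of the sublevel set $\pi^{-1}(\mathcal{L})$ established just above — which in turn rests on the coercivity of $f$ and on the iterates remaining in $\mathcal{L}$ — is what keeps $L$ finite, and is therefore essential rather than a cosmetic restriction.
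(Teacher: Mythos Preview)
Your proposal is correct and follows essentially the same approach as the paper: split off the linear term $-2A\overline{x}$, then telescope the cubic difference $\overline{y}\overline{y}^*\overline{y}-\overline{x}\overline{x}^*\overline{x}$ into three summands each containing a single factor $\overline{y}-\overline{x}$, and bound via submultiplicativity together with the boundedness $\norm{\cdot}_F\leq C$ from the preceding lemma. The only cosmetic difference is the particular intermediate terms inserted in the telescope (the paper writes $\overline{x}\overline{x}^*(\overline{x}-\overline{y}) + (\overline{x}-\overline{y})\overline{x}^*\overline{y} + \overline{y}(\overline{x}^*-\overline{y}^*)\overline{y}$), yielding the same $3C^2$ bound on the cubic map and hence the same Lipschitz constant.
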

\begin{proof}
It suffices to show that $q: \overline{x} \mapsto \overline{x}\overline{x}^*\overline{x}$ is Lipschitz continuous on $\pi^{-1}(\mathcal{L})$.    Let $\overline{x},\overline{y} \in \pi^{-1}(\mathcal{L})$. Then  $\norm{\overline{x}}_F \leq C, \norm{\overline{y}}_F \leq C$ by Lemma $\ref{lem:gradient_lipschitz}$. 
\begin{eqnarray*}
    \norm{q(\overline{x}) - q(\overline{y})}_F &=& \norm{\overline{x}\overline{x}^*\overline{x} - \overline{y}\overline{y}^*\overline{y} }_F =\norm{\overline{x}\overline{x}^*\overline{x} - \overline{x}\overline{x}^*\overline{y} +  \overline{x}\overline{x}^*\overline{y} - \overline{y}\overline{y}^*\overline{y} }_F \\
    &\leq& \norm{\overline{x}\overline{x}^*\overline{x} - \overline{x}\overline{x}^*\overline{y}}_F  +\norm{  \overline{x}\overline{x}^*\overline{y} - \overline{y}\overline{y}^*\overline{y} }_F =\norm{\overline{x}\overline{x}^*\overline{x} - \overline{x}\overline{x}^*\overline{y}}_F  +\norm{  \overline{x}\overline{x}^*\overline{y} -\overline{y}\overline{x}^*\overline{y} +\overline{y}\overline{x}^*\overline{y}  - \overline{y}\overline{y}^*\overline{y} }_F \\ 
    &\leq&  \norm{\overline{x}\overline{x}^*\overline{x} - \overline{x}\overline{x}^*\overline{y}}_F  +\norm{  \overline{x}\overline{x}^*\overline{y} -\overline{y}\overline{x}^*\overline{y}}_F +\norm{\overline{y}\overline{x}^*\overline{y}  - \overline{y}\overline{y}^*\overline{y} }_F \\ 
    &\leq& \norm{\overline{x}\overline{x}^*}\norm{\overline{x} - \overline{y}}_F + \norm{\overline{x}-\overline{y}}_F \norm{\overline{x}^*}_F\norm{\overline{y}}_F + \norm{\overline{y}}_F\norm{\overline{x}^* - \overline{y}^*}_F \norm{\overline{y}}_F \leq  3C^2 \norm{\overline{x} - \overline{y}}_F.
\end{eqnarray*}
\end{proof}

\begin{thm}[Zoutendijk’s theorem on manifold]\label{thm:zoutendijk}
Let $\eta_k$ be a descent direction and let $\alpha_k $ satisfy the strong Wolfe conditions \eqref{wolfe_1} and \eqref{wolfe_2}. Then for the cost function $h$ defined in \ref{eqn:cost_function_quotient}, the following series converges.
\[
\sum_k^\infty \cos^2\theta_k \norm{\grad h(x_k)}_{x_k}^2 < \infty.
\]
\end{thm}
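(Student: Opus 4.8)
The plan is to mirror the classical Euclidean Zoutendijk argument of \cite{nocedal1999numerical}, exploiting the fact that, via the horizontal lift, every object reduces to a straight-line search on the bounded set $\pi^{-1}(\mathcal L)$. Concretely, I set $\phi_k(t) := h(R_{x_k}(t\eta_k)) = f(\overline{x}_k + t\overline{\eta}_k)$, where the second equality uses that the retraction lifts to the affine map $\overline{R}_{\overline{x}}(t\overline{\eta}) = \overline{x} + t\overline{\eta}$ from Section \ref{sec:retraction} and that $g$ coincides with the real inner product \eqref{real-inner-product}. Differentiating gives $\phi_k'(t) = \ip{\nabla f(\overline{x}_k + t\overline{\eta}_k), \overline{\eta}_k}_{\mathbb{C}^{n\times p}}$, so $\phi_k'(0) = g_{x_k}(\grad h(x_k), \eta_k) < 0$ (descent) and $\phi_k'(\alpha_k) = g_{x_{k+1}}(\grad h(x_{k+1}), \D R_{x_k}(\alpha_k\eta_k)[\eta_k])$, which are precisely the quantities entering the Wolfe conditions \eqref{wolfe_1} and \eqref{wolfe_2}. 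This reduction is the conceptual crux; everything afterward is routine.

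First I would record that the iterates never leave the sublevel set: condition \eqref{wolfe_1} together with $\phi_k'(0) < 0$ forces $h(x_{k+1}) \leq h(x_k)$, hence $x_k \in \mathcal{L}$ and $\overline{x}_k \in \pi^{-1}(\mathcal{L})$ for all $k$. In particular both endpoints $\overline{x}_k, \overline{x}_{k+1}$ are norm-bounded by $C$, so Lemma \ref{lem:gradient_lipschitz} applies directly to the pair; note that I only need the Lipschitz estimate between the two endpoints, not along the whole segment, which sidesteps any convexity concern for $\pi^{-1}(\mathcal{L})$.

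Next I would derive a positive lower bound on the step size. The strong-Wolfe curvature condition \eqref{wolfe_2} yields $\phi_k'(\alpha_k) \geq c_2\phi_k'(0)$, whence $\phi_k'(\alpha_k) - \phi_k'(0) \geq (c_2-1)\phi_k'(0) = (1-c_2)\abs{\phi_k'(0)} > 0$. On the other hand, since $\overline{x}_{k+1} - \overline{x}_k = \alpha_k\overline{\eta}_k$, Cauchy--Schwarz and Lemma \ref{lem:gradient_lipschitz} give $\phi_k'(\alpha_k) - \phi_k'(0) = \ip{\nabla f(\overline{x}_{k+1}) - \nabla f(\overline{x}_k), \overline{\eta}_k} \leq L\alpha_k\norm{\overline{\eta}_k}_F^2$. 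Combining the two gives $\alpha_k \geq \frac{1-c_2}{L}\,\frac{\abs{g_{x_k}(\grad h(x_k),\eta_k)}}{\norm{\eta_k}_{x_k}^2}$.

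Finally, I would substitute this lower bound into \eqref{wolfe_1}. Using $g_{x_k}(\grad h(x_k),\eta_k) = -\cos\theta_k\,\norm{\grad h(x_k)}_{x_k}\norm{\eta_k}_{x_k}$ from \eqref{eqn:theta_k}, the sufficient decrease becomes $h(x_{k+1}) \leq h(x_k) - c\,\cos^2\theta_k\,\norm{\grad h(x_k)}_{x_k}^2$ with $c = c_1(1-c_2)/L > 0$. Summing over $k$ and telescoping, together with $h \geq 0$, bounds all partial sums by $h(x_0)$, so letting the index tend to infinity gives convergence of the series. The main obstacle is thus not any single estimate but making the reduction in the first paragraph airtight: verifying that $\phi_k'$ genuinely has the stated inner-product form and that the lifted picture legitimately turns the manifold Wolfe conditions into the Euclidean ones, which rests on the straight-line retraction, the metric/inner-product identification, and Theorem \ref{thm-1}.
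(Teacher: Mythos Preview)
Your proposal is correct and follows essentially the same route as the paper's proof: both reduce the manifold Wolfe conditions to their Euclidean form via the straight-line retraction and horizontal lift, combine the curvature condition \eqref{wolfe_2} with the Lipschitz bound of Lemma~\ref{lem:gradient_lipschitz} to obtain the step-size lower bound $\alpha_k \geq \frac{(1-c_2)}{L}\frac{|g_{x_k}(\grad h(x_k),\eta_k)|}{\|\eta_k\|_{x_k}^2}$, then insert this into \eqref{wolfe_1} and telescope using $h\geq 0$. Your packaging via the scalar function $\phi_k$ is a standard presentation of the same argument; the paper carries out the identical computation more explicitly by invoking $P^{\mathcal H}_{\overline{x}_{k+1}}(\overline{\eta}_k)=\overline{\eta}_k$ to identify $g_{x_{k+1}}(\grad h(x_{k+1}),\D R_{x_k}(\alpha_k\eta_k)[\eta_k])$ with $\ip{\nabla f(\overline{x}_{k+1}),\overline{\eta}_k}$.
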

\begin{proof}
    From the strong Wolfe condition \eqref{wolfe_2} we have 
    \begin{eqnarray*}
         (c_2 -1 ) g_{x_k}(\grad h(x_k), \eta_k) 
        & \leq&  g_{x_{k+1}}\left((\grad h(R_{x_k}(\alpha_k \eta_k), \D R_{x_k}(\alpha_k \eta_k)[\eta_k]\right) - g_{x_k}\left( \grad h(x_k), \eta_k\right)\\
        & = & g_{\overline{x}_{k+1}}\left(\grad f(\overline{x}_k+\alpha_k \overline{\eta}_k), P^\mathcal{H}_{\overline{x}_k + \alpha_k \overline{\eta}_{k}}(\overline{\eta}_k)\right) - g_{\overline{x}_k}\left( \grad f(\overline{x}_k), \overline{\eta}_k\right) \\
        &=& g_{\overline{x}_{k+1}}\left(\grad f(\overline{x}_k+\alpha_k \overline{\eta}_k), \overline{\eta}_k\right) - g_{\overline{x}_k}\left( \grad f(\overline{x}_k), \overline{\eta}_k\right). 
    \end{eqnarray*}
    
    Notice that our Riemannian metric $g$ is simply the inner product on the Euclidean space $\mathbb{C}^{n\times p}$, hence
    
    \begin{equation}
        g_{\overline{x}_{k+1}}\left(\grad f(\overline{x}_k+\alpha_k \overline{\eta}_k), \overline{\eta}_k\right) - g_{\overline{x}_k}\left( \grad f(\overline{x}_k), \overline{\eta}_k\right) =  \ip{\grad f(\overline{x}_k+\alpha_k \overline{\eta}_k) -  \grad f(\overline{x}_k),\overline{\eta}_k } .
    \end{equation}
    
    From Lemma \ref{lem:gradient_lipschitz} we know
    
    \[
     \ip{\grad f(\overline{x}_k+\alpha_k \overline{\eta}_k) -  \grad f(\overline{x}_k),\overline{\eta}_k } \leq  \alpha_k L \norm{\overline{\eta}_k}_F^2.
    \]
    
    Hence for any $k$ we have 
    \begin{equation}\label{eqn:alpha}
        \alpha_k \geq \frac{(c_2 - 1)g_{x_k}(\grad h(x_k),\eta_k)}{L \norm{\overline{\eta}_k}^2_F}.
    \end{equation}
    
    Now it follows from \eqref{wolfe_1} and \eqref{eqn:alpha} that 
    \begin{eqnarray*}
        0 \leq h(x_{k+1}) &\leq& h(x_k) + c_1 \alpha_k g_{x_k}(\grad h(x_k),\eta_k) \\    &\leq & h(x_k) - \frac{c_1(1-c_2)}{L} \cos^2\theta_k \norm{\grad h(x_k)}_{x_k}^2  \leq  h(x_0) - \frac{c_1(1-c_2)}{L} \sum_{j=0}^k\cos^2\theta_j \norm{\grad h(x_j)}_{x_j}^2. 
    \end{eqnarray*}
    
    Hence 
    \begin{equation}
        \sum_{k=0}^\infty \cos^2\theta_k \norm{\grad h(x_k)}_{x_k}^2 \leq \frac{L}{c_1 (1-c_2)} h(x_0) < \infty.  
    \end{equation}
\end{proof}

\begin{lem}
If using  Fletcher-Reeves  method in Algorithm $\ref{alg:RCG}$, then for $0 < c_1<c_2 < 1/2$, the search direction $\eta_k$  is a descent direction satisfying 
\begin{equation}\label{eqn:cos}
    -\frac{1}{1-c_2} \leq \frac{g_{x_k}(\grad h(x_k), \eta_k)}{\norm{\grad h(x_k) }_{x_k}^2} \leq \frac{2c_2 - 1}{ 1- c_2}. 
\end{equation}
\end{lem}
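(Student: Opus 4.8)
The plan is to prove the two-sided bound \eqref{eqn:cos} by induction on $k$, following the classical argument for Euclidean Fletcher--Reeves in \cite{nocedal1999numerical} adapted to the quotient-manifold setting. I would write $r_k := g_{x_k}(\grad h(x_k),\eta_k)/\norm{\grad h(x_k)}_{x_k}^2$ for the quantity to be controlled; the claim is that $-\tfrac{1}{1-c_2}\le r_k\le \tfrac{2c_2-1}{1-c_2}$ for all $k$, and since $c_2<1/2$ the upper bound is strictly negative, so the inequality simultaneously yields the descent property $g_{x_k}(\grad h(x_k),\eta_k)<0$. For the base case $k=0$, the initialization $\eta_0=-\grad h(x_0)$ gives $r_0=-1$, and I would check directly that $-1$ lies in the stated interval: the upper inequality reduces to $0\le c_2$ and the lower inequality to $-\tfrac{1}{1-c_2}<-1$, both valid precisely because $0<c_2<1/2$.

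For the inductive step, the key structural observation is that the vector transport appearing in the FR direction update is \emph{literally the same object} as the differentiated retraction controlled by the curvature Wolfe condition \eqref{wolfe_2}: by the definition \eqref{eqn:vector_transport}, $\mathcal{T}_{\alpha_k\eta_k}(\eta_k)=\D R_{x_k}(\alpha_k\eta_k)[\eta_k]$. Using the recursion $\eta_{k+1}=-\grad h(x_{k+1})+\beta_{k+1}\mathcal{T}_{\alpha_k\eta_k}(\eta_k)$ with the FR coefficient $\beta_{k+1}=\norm{\grad h(x_{k+1})}_{x_{k+1}}^2/\norm{\grad h(x_k)}_{x_k}^2$, I would expand $g_{x_{k+1}}(\grad h(x_{k+1}),\eta_{k+1})$, divide by $\norm{\grad h(x_{k+1})}_{x_{k+1}}^2$, and cancel the FR factor to obtain
\[
r_{k+1}=-1+\frac{g_{x_{k+1}}\left(\grad h(x_{k+1}),\mathcal{T}_{\alpha_k\eta_k}(\eta_k)\right)}{\norm{\grad h(x_k)}_{x_k}^2}.
\]

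Next I would bound the second term using \eqref{wolfe_2}, which gives $\bigl|g_{x_{k+1}}(\grad h(x_{k+1}),\mathcal{T}_{\alpha_k\eta_k}(\eta_k))\bigr|\le c_2\,|g_{x_k}(\grad h(x_k),\eta_k)|$, so that term is at most $c_2|r_k|$ in absolute value. Combining with the induction hypothesis (which, together with $r_k<0$, gives $|r_k|\le\tfrac{1}{1-c_2}$), I arrive at $-1-c_2|r_k|\le r_{k+1}\le -1+c_2|r_k|$; substituting $|r_k|\le\tfrac{1}{1-c_2}$ and simplifying $-1+\tfrac{c_2}{1-c_2}=\tfrac{2c_2-1}{1-c_2}$ yields both the required lower and upper bounds, closing the induction and re-establishing $r_{k+1}<0$.

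I do not anticipate a serious obstacle, since the argument is essentially the Euclidean proof transported to the quotient manifold. The only points demanding care are (i) confirming that the transport term in the CG recursion coincides with the $\D R$ term appearing in \eqref{wolfe_2}, so that the Euclidean bookkeeping carries over unchanged, and (ii) the sign accounting — tracking that $r_k<0$ throughout, so that $|g_{x_k}(\grad h(x_k),\eta_k)|=-g_{x_k}(\grad h(x_k),\eta_k)$ — together with verifying that the base value $r_0=-1$ genuinely sits inside the target interval, which is exactly where the restriction $c_2<1/2$ is needed.
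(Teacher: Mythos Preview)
Your proposal is correct and follows essentially the same approach as the paper: both proceed by induction on $k$, verify the base case $r_0=-1$, expand $r_{k+1}$ using the FR update together with the identification $\mathcal{T}_{\alpha_k\eta_k}(\eta_k)=\D R_{x_k}(\alpha_k\eta_k)[\eta_k]$, and then apply the strong Wolfe condition \eqref{wolfe_2} and the induction hypothesis to close the two-sided bound.
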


\begin{proof}
    We prove it by induction on $k$.
    
    When $k = 0$,  \eqref{eqn:cos}  holds since
    \[
        \frac{g_{x_0}(\grad h(x_0), \eta_0)}{\norm{\grad h(x_0) }_{x_0}^2} = \frac{g_{x_0}(\grad h(x_0), -\grad h(x_0))}{\norm{\grad h(x_0) }_{x_0}^2} = -1.
    \]
    
    Now suppose \eqref{eqn:cos} holds for some $k\geq 0$.

    Recall that we use differentiated retraction as our vector transport:
    \[
    \mathcal{T}_{\alpha_k \eta_k}(\eta_k) = \D R_{x_k}(\alpha_k \eta_k)[\eta_k].
    \]
    And the  $\beta_{k+1}$ in Fletcher-Reeves method is defined as 
    \[
    \beta_{k+1} = \frac{g_{{x}_{k+1}}\left(\grad h({x}_{k+1}), \grad h({x}_{k+1})\right)}{g_{{x}_{k}}\left( \grad h({x}_{k}), \grad h({x}_{k}) \right)}.
    \]
    
    Hence the middle term in \eqref{eqn:cos} for $k+1$ is 
    \begin{eqnarray}
    \frac{g_{x_{k+1}}(\grad h(x_{k+1}), \eta_{k+1})}{\norm{\grad h(x_{k+1}) }_{x_{k+1}}^2}  \notag&=&  \frac{g_{x_{k+1}}\left( \grad h(x_{k+1}), - \grad h(x_{k+1})+ \beta_{k+1} \mathcal{T}_{\alpha_k \eta_k}(\eta_k)  \right)}{\norm{\grad h(x_{k+1}) }_{x_{k+1}}^2} \\
     \notag&=& \frac{g_{x_{k+1}}\left( \grad h(x_{k+1}), - \grad h(x_{k+1})+ \beta_{k+1} \D R_{x_k}(\alpha_k \eta_k)[\eta_k]) \right)}{\norm{\grad h(x_{k+1}) }_{x_{k+1}}^2}  \\
    \label{eqn:1}&=& -1 + \frac{g_{x_{k+1}}\left(\grad h(x_{k+1})), \D R_{x_k}(\alpha_k \eta_k)[\eta_k] \right) }{\norm{\grad h(x_{k}) }_{x_{k}}^2}.
    \end{eqnarray}
    
    From the strong Wolfe condition \eqref{wolfe_2} we have  
    \begin{equation} \label{eqn:2}
        c_2 g_{x_k}(\grad h(x_k), \eta_k) \leq g_{x_{k+1}}(\grad h(x_{k+1}), \D R_{x_k}(\alpha_k \eta_k)[\eta_k]) \leq  -c_2 g_{x_k}(\grad h(x_k), \eta_k).
    \end{equation}

    Hence from \eqref{eqn:1} and \eqref{eqn:2} we have 
    \[
    -1+ c_2 \frac{g_{x_k}(\grad h(x_k), \eta_k)}{\norm{\grad h(x_{k}) }_{x_{k}}^2} \leq \frac{g_{x_{k+1}}(\grad h(x_{k+1}), \eta_{k+1})}{\norm{\grad h(x_{k+1}) }_{x_{k+1}}^2} \leq -1 - c_2 \frac{g_{x_k}(\grad h(x_k), \eta_k)}{\norm{\grad h(x_{k}) }_{x_{k}}^2}. 
    \]
    And the result \eqref{eqn:cos} follows from the induction hypothesis.

\end{proof}

\begin{thm}
For cost function $h$ in \eqref{eqn:cost_function_quotient},  the Algorithm $\ref{alg:RCG}$ with  Fletcher-Reeves method  generates iterates $x_k$ such that 
\begin{equation}\label{eqn:liminf_grad}
    \liminf_{k \to \infty} \norm{\grad h(x_k)}_{x_k } = 0.
\end{equation}
\end{thm}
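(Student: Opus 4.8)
The plan is to argue by contradiction, combining Zoutendijk's theorem (Theorem~\ref{thm:zoutendijk}) with an upper bound on the search-direction norms $\norm{\eta_k}_{x_k}$ that is special to the Fletcher--Reeves scheme. Suppose \eqref{eqn:liminf_grad} fails; then there is a constant $\gamma > 0$ with $\norm{\grad h(x_k)}_{x_k} \geq \gamma$ for all $k$, and the goal is to derive a contradiction with the summability provided by Zoutendijk's theorem.

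First I would convert the descent bound \eqref{eqn:cos} into a lower bound on $\cos\theta_k$. Writing $t_k := g_{x_k}(\grad h(x_k),\eta_k)/\norm{\grad h(x_k)}_{x_k}^2$, the definition \eqref{eqn:theta_k} gives $\cos\theta_k = -t_k\, \norm{\grad h(x_k)}_{x_k}/\norm{\eta_k}_{x_k}$. Since \eqref{eqn:cos} yields $t_k \leq (2c_2-1)/(1-c_2) < 0$ for $c_2 < 1/2$, we have $-t_k \geq (1-2c_2)/(1-c_2) > 0$, hence
\[
\cos^2\theta_k\, \norm{\grad h(x_k)}_{x_k}^2 \geq \left(\frac{1-2c_2}{1-c_2}\right)^2 \frac{\norm{\grad h(x_k)}_{x_k}^4}{\norm{\eta_k}_{x_k}^2}.
\]
Theorem~\ref{thm:zoutendijk} then forces $\sum_k \norm{\grad h(x_k)}_{x_k}^4/\norm{\eta_k}_{x_k}^2 < \infty$.

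The crux is bounding $\norm{\eta_k}_{x_k}^2$ from above, and here I would exploit two structural facts. By Lemma~\ref{lem:equivalence_eta} the vector transport used in the algorithm is isometric, so $\norm{\mathcal{T}_{\alpha_{k-1}\eta_{k-1}}(\eta_{k-1})}_{x_k} = \norm{\eta_{k-1}}_{x_{k-1}}$; and the strong Wolfe condition \eqref{wolfe_2} controls the cross term $\abs{g_{x_k}(\grad h(x_k), \mathcal{T}_{\alpha_{k-1}\eta_{k-1}}(\eta_{k-1}))} \leq c_2\,\abs{g_{x_{k-1}}(\grad h(x_{k-1}),\eta_{k-1})}$, which by \eqref{eqn:cos} is at most $\tfrac{c_2}{1-c_2}\norm{\grad h(x_{k-1})}_{x_{k-1}}^2$. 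Expanding $\norm{\eta_k}_{x_k}^2 = \norm{-\grad h(x_k) + \beta_k\, \mathcal{T}_{\alpha_{k-1}\eta_{k-1}}(\eta_{k-1})}_{x_k}^2$ with the Fletcher--Reeves choice $\beta_k = \norm{\grad h(x_k)}_{x_k}^2/\norm{\grad h(x_{k-1})}_{x_{k-1}}^2$, dividing through by $\norm{\grad h(x_k)}_{x_k}^4$, and telescoping the resulting recursion should yield
\[
\frac{\norm{\eta_k}_{x_k}^2}{\norm{\grad h(x_k)}_{x_k}^4} \leq \frac{1+c_2}{1-c_2}\sum_{j=0}^{k}\frac{1}{\norm{\grad h(x_j)}_{x_j}^2}.
\]

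Finally, under the contradiction hypothesis $\norm{\grad h(x_j)}_{x_j}^2 \geq \gamma^2$ the right-hand side is at most $\tfrac{1+c_2}{1-c_2}(k+1)/\gamma^2$, so $\norm{\grad h(x_k)}_{x_k}^4/\norm{\eta_k}_{x_k}^2 \geq \mathrm{const}/(k+1)$ and the series $\sum_k \norm{\grad h(x_k)}_{x_k}^4/\norm{\eta_k}_{x_k}^2$ diverges like the harmonic series, contradicting its summability. I expect the main obstacle to be the norm recursion: one must verify that the transport is genuinely isometric here (via Lemma~\ref{lem:equivalence_eta}) and that the strong Wolfe curvature bound \eqref{wolfe_2} applies to the \emph{transported} previous direction, so that the cross term collapses and the recursion telescopes cleanly; the remaining estimates are routine.
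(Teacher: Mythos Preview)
Your proposal is correct and follows essentially the same route as the paper: argue by contradiction, extract from \eqref{eqn:cos} the lower bound $\cos\theta_k \geq \tfrac{1-2c_2}{1-c_2}\,\norm{\grad h(x_k)}_{x_k}/\norm{\eta_k}_{x_k}$, invoke Zoutendijk's theorem to get summability of $\norm{\grad h(x_k)}_{x_k}^4/\norm{\eta_k}_{x_k}^2$, and then derive the recursive bound on $\norm{\eta_k}_{x_k}^2$ using the isometry of the vector transport (which indeed follows from Lemma~\ref{lem:equivalence_eta} together with the fact that $g_{\overline{x}}$ is the base-point-independent Euclidean inner product) and the Wolfe curvature bound on the cross term, leading to the harmonic-series contradiction. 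The only cosmetic difference is that the paper carries the recursion for $\norm{\eta_k}_{x_k}^2$ directly rather than first dividing by $\norm{\grad h(x_k)}_{x_k}^4$, but the telescoped inequality and the contradiction are identical.
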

\begin{proof}
    If $\grad h(x_k) =0 $ for some $k = k_0$. Then $\grad h(x_k) = 0$ for all $k\geq k_0$. 
    
    So we consider $\grad h(x_k) \neq 0$ for all $k$. We shall prove  \eqref{eqn:liminf_grad} by contradiction. Suppose \eqref{eqn:liminf_grad} does not hold. Then there exists a constant $c >0$ such that  
    \begin{equation}\label{eqn:contradiction_assumption}
        \norm{\grad h(x_k)}_{x_k} \geq c >0,\quad \forall k\geq 0. 
    \end{equation}
    
    From \eqref{eqn:theta_k} and \eqref{eqn:cos}  we have
    
    \begin{equation}
        \cos \theta_k \geq \frac{1-2c_2}{1-c_2} \frac{\norm{\grad h(x_k)}_{x_k}}{\norm{\eta_k}_{x_k}}. 
    \end{equation}
    It follows by Theorem \ref{thm:zoutendijk} that the following series converges. 
    
    \begin{equation} \label{eqn:series_converge}
        \sum_{k=0}^\infty \frac{\norm{\grad h(x_k)}_{x_k}^4}{\norm{\eta_k}_{x_k}^2} < \infty. 
    \end{equation}
    
    For $k\geq 1$, the strong Wolfe condition \eqref{wolfe_2} and \eqref{eqn:cos} gives rise to 
    \begin{eqnarray*}
    \abs{g_{x_k} \left(\grad h(x_k), \mathcal{T}_{\alpha_{k-1} \eta_{k-1}}  (\eta_{k-1}) \right)}  &\leq& - c_2 g_{x_{k-1}} \left( \grad h(x_{k-1}), \eta_{k-1}\right) \leq \frac{c_2}{1-c_2} \norm{ \grad h(x_{k-1}) }_{x_{k-1}}^2. 
    \end{eqnarray*}   
    
    Hence we have the following recurrence equation for $\norm{\eta_k}^2_{x_k}$. 
    \begin{eqnarray}
    \norm{\eta_k}^2_{x_k} \notag&=& \norm{-\grad h(x_k) + \beta_k \mathcal{T}_{\alpha_{k-1} \eta_{k-1}}(\eta_{k-1})}^2_{x_k}  \\
    \notag&\leq & \norm{\grad h(x_k)}^2_{x_k} + 2 \beta_k \abs{g_{x_k}\left( \grad h(x_k), \mathcal{T}_{\alpha_{k-1} \eta_{k-1}}(\eta_{k-1}) \right)} + \beta_k^2 \norm{\mathcal{T}_{\alpha_{k-1} \eta_{k-1}}(\eta_{k-1})}^2_{x_{k}} \\
    \notag &\leq & \norm{\grad h(x_k)}^2_{x_k} + \frac{2c_2 }{1-c_2}\beta_k \norm{ \grad h(x_{k-1}) }_{x_{k-1}}^2 + \beta_k^2 \norm{\mathcal{T}_{\alpha_{k-1} \eta_{k-1}}(\eta_{k-1})}^2_{x_{k}} \\
    \notag &=& \norm{\grad h(x_k)}^2_{x_k} + \frac{2c_2 }{1-c_2} \norm{ \grad h(x_{k}) }_{x_{k}}^2 + \beta_k^2 \norm{\mathcal{T}_{\alpha_{k-1} \eta_{k-1}}(\eta_{k-1})}^2_{x_{k}} \\
    \label{eqn:norm_eta_k}  &=& \frac{1+c_2}{1-c_2}  \norm{ \grad h(x_{k}) }_{x_{k}}^2 + \beta_k^2 \norm{\mathcal{T}_{\alpha_{k-1} \eta_{k-1}}(\eta_{k-1})}^2_{x_{k}} .
    \end{eqnarray}
    
    Recall that we use differentiated retraction as our vector transport:
    \[
    \mathcal{T}_{\alpha_{k-1} \eta_{k-1}}(\eta_{k-1}) = \D R_{x_{k-1}}(\alpha_{k-1} \eta_{k-1})[\eta_{k-1}] = \D \pi(\overline{x}_{k-1} + \alpha_{k-1} \overline{\eta}_{{k-1}}) \left[ P^\mathcal{H}_{\overline{x}_{k-1} + \alpha_{k-1} \overline{\eta}_{k-1}}(\overline{\eta}_{k-1}) \right].
    \]
    
    Hence 
    \begin{eqnarray*}
     \norm{\mathcal{T}_{\alpha_{k-1} \eta_{k-1}}(\eta_{k-1})}^2_{x_{k}} &=&  g_{x_{k}}\left(\mathcal{T}_{\alpha_{k-1} \eta_{k-1}}(\eta_{k-1}),\mathcal{T}_{\alpha_{k-1} \eta_{k-1}}(\eta_{k-1}) \right)= g_{\overline{x}_{k}}\left(\overline{\mathcal{T}_{\alpha_{k-1} \eta_{k-1}}(\eta_{k-1})}_{\overline{x}_k},\overline{\mathcal{T}_{\alpha_{k-1} \eta_{k-1}}(\eta_{k-1})}_{\overline{x}_k}\right)  \\
     \notag&=& g_{\overline{x}_{k}}\left(P^\mathcal{H}_{\overline{x}_{k-1} + \alpha_{k-1} \overline{\eta}_{k-1}}(\overline{\eta}_{k-1}),P^\mathcal{H}_{\overline{x}_{k-1} + \alpha_{k-1} \overline{\eta}_{k-1}}(\overline{\eta}_{k-1})\right) = g_{\overline{x}_{k-1}}\left(\overline{\eta}_{k-1},\overline{\eta}_{k-1}\right) =\norm{{\eta}_{k-1}}^2_{x_{k-1}}. 
    \end{eqnarray*}
    Hence \eqref{eqn:norm_eta_k} becomes the following recurrence formula for $ \norm{\eta_k}^2_{x_{k}}$.
    \begin{equation}
        \norm{\eta_k}^2_{x_{k}} \leq \frac{1+c_2}{1-c_2} \norm{\grad h(x_k)}^2_{x_k}  + \beta_k^2\norm{{\eta}_{k-1}}^2_{x_{k-1}}. 
    \end{equation}
    
    By recursively using \eqref{eqn:norm_eta_k} and recall the definition of $\beta_k$ in Fletcher-Reeves method  we obtain 
    \begin{eqnarray}\label{eqn:norm_eta_bound}
        \norm{\eta_k}^2_{x_k} \notag &\leq &  \frac{1+c_2}{1-c_2} \left( \norm{\grad h(x_k)}^2_{x_k} + \beta_k^2 \norm{\grad h(x_{k-1})}^2_{x_{k-1}}+ \dots + \beta_k^2\beta_{k-1}^2 \dots \beta_2^2 \norm{\grad h(x_1)}^2_{x_1}\right) \\ 
       \notag && + \beta_k^2\beta_{k-1}^2 \dots \beta_0^0 \norm{\eta_0}^2_{x_0} \\
       \notag &=& \frac{1+c_2}{1-c_2} \norm{\grad h(x_k)}^4_{x_k} \left( \norm{\grad h(x_k)}^{-2}_{x_k} +  \norm{\grad h(x_k)}^{-2}_{x_{k-1}}+\dots +  \norm{\grad h(x_k)}^{-2}_{x_1} \right)  \\
       \notag && +\norm{\grad h(x_k)}^4_{x_k} \norm{\grad h(x_0)}^{-2}_{x_0} \\
       \notag &<& \frac{1+c_2}{1-c_2}  \norm{\grad h(x_k)}^{4}_{x_k} \sum_{j=0}^{k} \norm{\grad h(x_j) }^{-2}_{x_{j}}  \leq \frac{1+c_2}{1-c_2}  \norm{\grad h(x_k)}^{4}_{x_k}  \frac{k+1}{c^2},
    \end{eqnarray}
    where we have used the contradiction assumption \eqref{eqn:contradiction_assumption} in the last inequality. \eqref{eqn:norm_eta_bound} results in the divergence of the following series. 
    \begin{equation}
        \sum_{k=0}^\infty \frac{\norm{\grad h(x_k)}^4_{x_k}}{\norm{\eta_k}_{x_k}^2} \geq c^2\frac{1-c_2}{1+c_2}\sum_{k=0}^\infty \frac{1}{k+1} = \infty. 
    \end{equation}
    This contradicts to \eqref{eqn:series_converge} and hence we have completed the proof.
\end{proof}

In general, it is more difficult to prove the convergence of the Riemannian $\mbox{PR}_+$ CG method. It is possible to extend the convergence proof of  $\mbox{PR}_+$ CG method in \cite{gilbert1992global} to Riemannian $\mbox{PR}_+$ CG method, but it is beyond the scope of this paper.

\section{Coordinate Riemannian Gradient Descent (CRGD)}
\label{sec-CRGD}
\textcolor{black}{
The orthogonalization-free methods are preferred for large scale problems. For much larger problems, the coordinate descent method is favored, since the full gradient can be too large to even store. For instance, the coordinate gradient descent method for finding leading eigenvalue in \cite{li2019coordinatewise} is the coordinate descent method for minimizing \eqref{min_prob_BM} with rank $p=1$. In this section, following the same Riemannian manifold notation as in previous sections, we show that the a Riemmanian coordinate descent method is also equivalent to the coordinate descent method for minimizing \eqref{min_prob_BM} with any rank $p>0$, which is the generalization of the algorithm in  \cite{li2019coordinatewise}.
}

\textcolor{black}{
In \cite{gutman_coordinate_2020}, a method called the tangent subspace descent method was proposed: this method generalized the block coordinate descent method to manifold settings. Instead of updating the full gradient at each iteration, the tangent direction in each update is a projected vector of the full Riemannian gradient to a subspace of the tangent space by some subspace selection rule $P_k$.  In the specific case of $\mathbb{C}^{n\times p}_*/\mathcal{O}_p$ considered in this paper, this method is written as Algorithm $\ref{alg:Coordinate_RGD}$ and we denote it as Coordinate Riemannian Gradient Descent (CRGD). }

Since the horizontal lift of  $\grad h(x_k)$ is a $n$-by-$p$ matrix, we can simply choose the subspace selection rule by cyclically selecting the $N$-column block of the $n$-by-$p$ matrix $\grad f(\overline{x}_k)$. Let $M_k$ denote the mask that evaluates the $k$-th $N$-column block of a $n$-by-$p$ matrix cyclically. That is, if $Z$ is a $n$-by-$p$ matrix, then
\begin{equation} \label{eqn:cyclic_projection}
    M_k(Z) = Z_{kN+1:(k+1)N,:} 
\end{equation}
where $Z_{kN+1:(k+1)N,:}$ denotes the $N$-by-$p$ matrix that takes the $(kN+1)$-th to $(k+1)N$-th columns of $Z$.  And the index that exceeds the matrix range is understood as modulo by the matrix size, namely, cyclically. Then our update to $\overline{x}_k$ is  written through the following 
\begin{equation}
    \overline{x}_{k+1} = \overline{R}_{\overline{x}_k}(\alpha M_k(\grad f(\overline{x}_k))),
    \label{CRGD}
\end{equation}
where $\alpha$ is a constant step size. 

\textcolor{black}{
With the simple retraction as in Section \ref{sec:retraction}, \eqref{CRGD} simply reduces to 
\begin{equation} \overline{x}_{k+1} = \overline{x}_{k+1}- \alpha M_k(2 (\overline{x}_k\overline{x}_k^*-A)\overline{x}_k)     \label{CRGD2}.
\end{equation}
Notice that \eqref{CRGD2} with $p=1$ and $N=1$ reduces to the 
coordinate descent method for the leading eigenvalue 
 in \cite{li2019coordinatewise}. In particular, if $p=1$ and we set $N=1$ and $P_k$ in Algorithm \ref{alg:Coordinate_RGD} to be $M_k$, defined in \eqref{eqn:cyclic_projection}, then Algorithm  \ref{alg:Coordinate_RGD} is equivalent to Algorithm 2 in \cite{li2019coordinatewise}. 
 So the generalization of the method  in \cite{li2019coordinatewise} to top $p$ eigenvalues can be equivalently written as \eqref{CRGD2} or \eqref{CRGD}, which is a Riemannian coordinate descent method.
}

To take the advantage of CRGD to solve large-scaled problems, one should implement it through compact implementation. That is, each update should only depend on the block size $N$ and should be independent of the problem size $n$. In the case of eigenvalue problem, $f(\overline{x}) = \frac{1}{2}\norm{\overline{x}\overline{x}^* - A}_F^2$. If we assume that $A$ is a sparse matrix such that we can achieve $M_{k}(Av)$ in $O(N)$, then we can indeed achieve a compact implementation of CRGD as in Algorithm \ref{alg:Compact_Implementation_Coordinate_RGD}.
\begin{algorithm}[ht]
\caption{Coordinate Riemannian gradient descent (CRGD) on the quotient manifold $\mathbb{C}^{n\times p}_*/\mathcal{O}_p$ with metric $g$}
\label{alg:Coordinate_RGD}
\begin{algorithmic}[1]
\Require initial iterate ${x}_0 \in \mathbb{C}^{n\times p}_*/\mathcal{O}_p$, tolerance $\varepsilon>0$, tangent vector $\xi_0 = -\grad h(x_0)$, \textcolor{black}{subspace selection rule $P_k$}, $\delta_0 := P_0(\xi_0)$, stepsize $\alpha >0$. 
\For{ $k =0,1,2,\dots$}
    \State{Obtain the new iterate by retraction}
    \[
    x_{k+1} = R_{x_k}(\alpha \delta_k)
    \]
    \State{Compute the projection of ${\xi}_{k+1} := - {\grad h(x_{k+1})}$ to a subspace of $T_{x_{k+1}} \mathbb{C}^{n\times p}/\mathcal{O}_p$}
    \par\hskip\algorithmicindent $\delta_{k+1} := P_{k+1}({\xi}_{k+1}) $ 
    \State{Check for convergence}
     \par\hskip\algorithmicindent if $\norm{{\delta}_{k+1}}:=\sqrt{g_{{x}_{k+1}}({\delta}_{k+1}, {\delta}_{k+1})} < \varepsilon $, then break 
\EndFor
\end{algorithmic}
\end{algorithm}

\begin{algorithm}[ht]
\caption{Compact implementation for cyclic coordinate Riemannian gradient descent on the quotient manifold $\mathbb{C}^{n\times p}_*/\mathcal{O}_p$ with metric $g$}\label{alg:Compact_Implementation_Coordinate_RGD}
\begin{algorithmic}[1]
\Require initial iterate $\overline{x}_0 \in \mathbb{C}^{n\times p}_*$, $\overline{\eta}_0 = -\grad f(\overline{x}_0) \in \mathbb{C}^{n\times p}$, first $N$ columns of $\overline{\eta}_0$: $\overline{\delta}_0 = \mathcal{M}_0(\overline{\eta}_0)$, $a_0=x_0^*x_0$, $b_0 = \delta_0^* x_0$, $c_0 = \delta_0^* \delta_0$, stepsize $\alpha >0$, $s_0 = a_0 + \alpha b_0 + \alpha b_0^* + \alpha^2 c_0$,  tolerance $\varepsilon>0$.
\For{ $k =0,1,2,\dots$}
    \State{Obtain the new iterate by retraction}
    \[
    \overline{x}_{k+1} = \overline{R}_{\overline{x}_k}(\alpha \overline{\delta}_k) = \overline{x}_k + \alpha \overline{\delta}_k
    \]
    \State{Cyclically compute the next $N$ columns of $\overline{\eta}_{k+1} = - \grad f(\overline{x}_{k+1})$}
    \par\hskip\algorithmicindent $\overline{\delta}_{k+1} := -2 M_{k+1}(\overline{x}_k s_k ) -2 \alpha M_{k+1}(\overline{\delta}_k s_k) +2M_{k+1}(A\overline{x}_k) + 2\alpha M_{k+1}(A\overline{\delta}_k)$
    \State{Check for convergence}
     \par\hskip\algorithmicindent if $\norm{{\overline{\delta}}_{k+1}}:=\sqrt{g_{\overline{x}_{k+1}}(\overline{\delta}_{k+1}, \overline{\delta}_{k+1})} < \varepsilon $, then break
    \State{Compute and update $a_{k+1},b_{k+1},c_{k+1}$}
    \par\hskip\algorithmicindent 
    \[
    \begin{aligned}
    & a_{k+1} = a_k + \alpha \overline{x}_k^* \overline{\delta}_k + \alpha \overline{\delta}_k^* x_k + \alpha^2 \overline{\delta}_k^* \overline{\delta}_k  \\ 
    & b_{k+1} =  \overline{\delta}_{k+1}^* \overline{x}_{k+1} \\
    & c_{k+1} = \overline{\delta}_{k+1}^* \delta_{k+1}
    \end{aligned}
    \]
    \State{Compute temporary variable $s_{k+1} \in \mathbb{C}^{p \times p}$}
    \par\hskip\algorithmicindent 
    $s_{k+1} = a_{k+1} + \alpha b_{k+1} + \alpha b_{k+1}^* + \alpha^2 c_{k+1} $
\EndFor
\end{algorithmic}
\end{algorithm}

\section{Numerical Experiments}
\label{sec-tests}
\textcolor{black}{
The numerical performance of the simple CG methods \eqref{BMCG} has been well studied in the literature, e.g., see \cite{gao2022triangularized} for a comparison with other orthogonalization-free methods. In general, the performance of \eqref{BMCG} for solving \eqref{min_prob_BM} depends on the spectrum of the matrix $A$.
For completeness, in this section we verify the numerical performance of the simple CG methods \eqref{BMCG} on  large matrices $A$. 
}

\subsection{Real symmetric PSD matrices}

We consider two types of matrices $A$. The first type is a 2D Laplacian matrix, which has a nearly uniform eigenvalue gap for a few top eigenvalues. Consider the discretization of a 2D Poisson equation with homogeneous Dirichlet boundary conditions on $[0,1]\times [0,1]$ using $m$-by-$m$ interior grid points. Then the matrix representing the Laplacian operator is a 2D Laplacian matrix $A$ of size $m^2$-by-$m^2$ given as
\begin{equation}\label{eqn:2d_laplacian_matrix}
    A = \frac{1}{\Delta x^2}K \otimes I_m + I_m\otimes \frac{1}{\Delta y^2} K,
\end{equation}
where $\Delta x = \Delta y = \frac{1}{m+1}$ and $K$ is a $m$-by-$m$ tridiagonal matrix.
\begin{equation}\label{eqn:laplacian_matrix}
    K = \bmat{
     2 & -1 &       &        &       &   \\ 
    -1 & 2  & -1    &        &       &   \\
       & -1 &  2    & -1     &       &   \\
       &    &\ddots & \ddots & \ddots&   \\
       &    &       & -1     & 2     & -1 \\
       &    &       &        & -1    & 2
       }
\end{equation}

The second type is constructed by eigenvalue decomposition $A=V\Lambda V^{-1}$ where eigenvectors $V$ are given by discrete cosine transform. We  assign $\Lambda$ so that the eigenvalues 
 $\lambda_i$ have four types of distribution of eigenvalues, similar to the numerical experiments considered in \cite{gao2022triangularized} but with a much larger matrix size: 
\begin{enumerate}
    \item (random) $\lambda_ i \sim \abs{\mathcal{N}(0,1)}$, where $\mathcal{N}(0,1)$ is standard normal distribution.  
    \item (uniform) $\lambda_i = 1 - \frac{i-1}{n}, \quad 1\leq i \leq r.$
    \item (u-shape) $\lambda_1 = \frac{14}{16}, \lambda_2 = \frac{10}{16}, \lambda_3 = \frac{8}{16}, \lambda_4 = \frac{7}{16},\lambda_5 = \frac{5}{16}, \lambda_i = \frac{1}{16}.$
    \item (logarithm) $\lambda_i = \frac{2^{1+\lfloor \log_2 n \rfloor}}{n} \frac{1}{2^i}, \quad 1\leq i \leq r.$
\end{enumerate}

We first compare the simple CG methods \eqref{BMCG} with the TriOFM method in \cite{gao2021global} for a 2D discrete Laplacian matrix, shown in Figure \ref{fig-laplacian-1}.

 {Next, we compare TriOFM, CG and LOBPCG for different distributed eigenvalues. We use Algorithm 1  in \cite{doi:10.1137/17M1129830} as the orthogonalization-free LOBPCG method in numerical tests. } The comparison is shown for randomly distributed eigenvalues in Figure \ref{fig:rand-dstribution}, uniformly distributed eigenvalues in Figure \ref{fig:dct_uniform}, U-shape distribution of eigenvalues in Figure \ref{fig:dct_ushape},
and log distribution of eigenvalues in Figure \ref{fig:log-2}. 
{In all these comparisions, the orthogonalization-free LOBPCG method is the most efficient one.}
Notice that the simple CG-PR method is much less efficient than the TriOFM method for the log distribution of eigenvalues. However, this slowness is due to the eigenvalue gap between $\sigma_p$ and $\sigma_{p+1}$. In Figure \ref{fig:log-3}, the top $p$ eigenvalues with $p=5$ have a log distribution but the gap between  $\sigma_p$ and $\sigma_{p+1}$ is enlarged by shifting the top $p$ eigenvalues from the same matrix in Figure \ref{fig:log-2}, and we observe that the simple CG-PR method is efficient in this scenario. In other words,  the matrix in Figure \ref{fig:log-2} has
eigenvalues $\lambda_1\geq \lambda_2\geq \cdots\geq \lambda_n$,
and the matrix 
in Figure \ref{fig:log-3} has
eigenvalues $\lambda_1+C\geq \lambda_2+C\geq \dots \geq \lambda_p+C\geq \lambda_{p+1}\geq \cdots\geq \lambda_n$. \textcolor{blue}{}

\begin{figure}[H]
\centering
\subfigure[Relative error vs iteration]{
\includegraphics[width=0.46\textwidth]{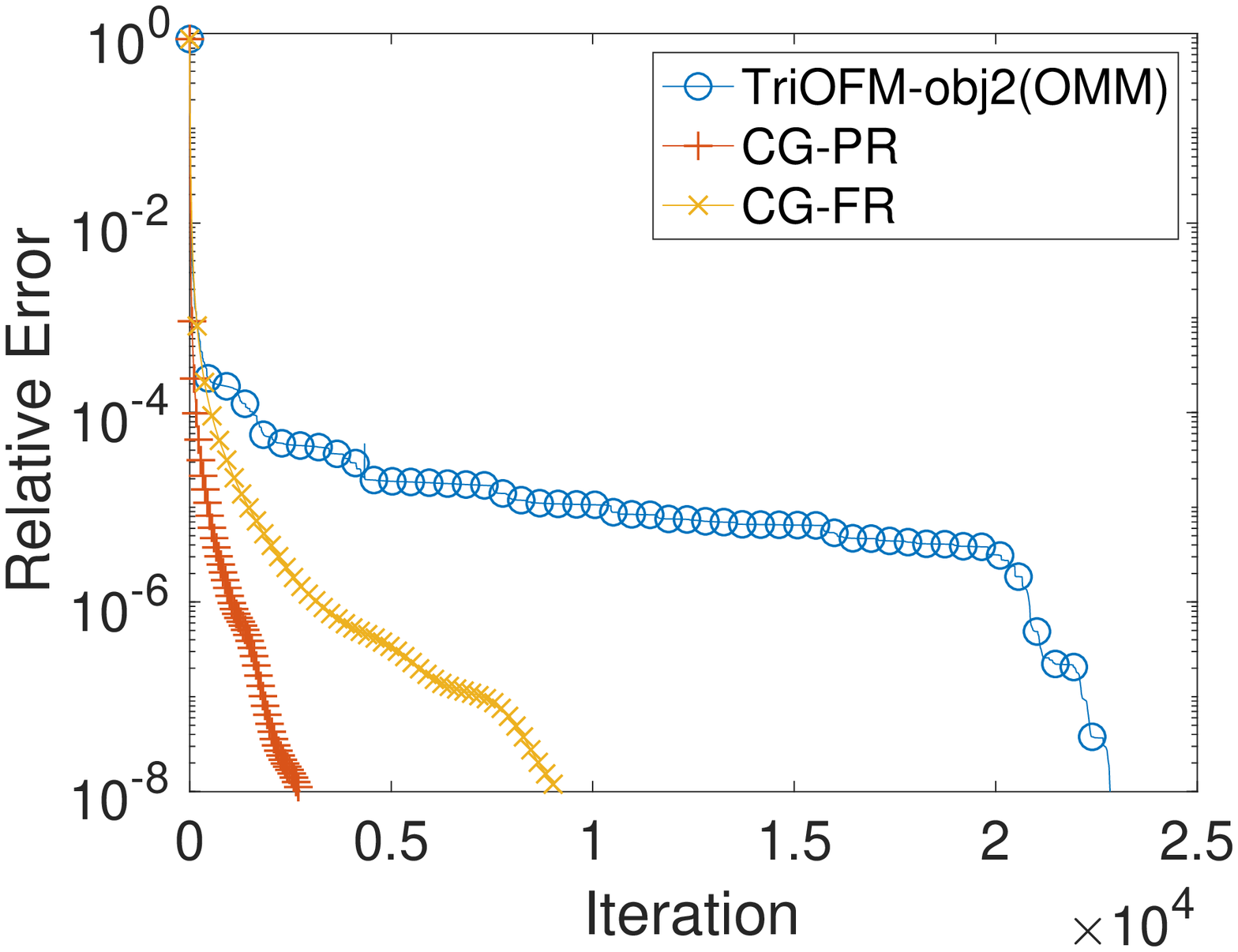}
}
\subfigure[Relative error vs CPU time]{
\includegraphics[width=0.46\textwidth]{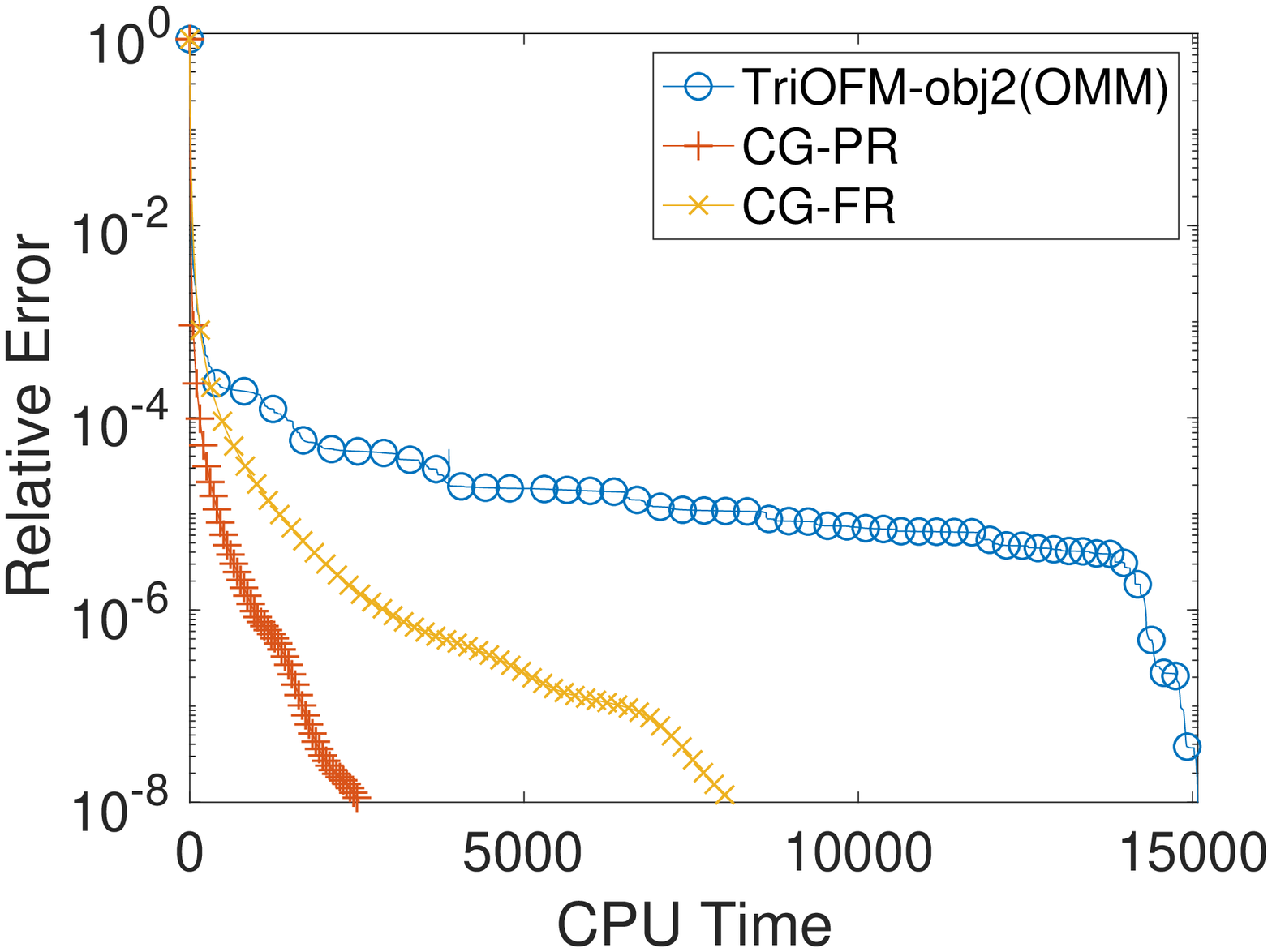}
}
\caption{Comparison for computing the top-10 eigenvalues of a 2D Laplacian matrix of size $10^6\times 10^6$.}
\label{fig-laplacian-1}
\end{figure}

\begin{figure}[H]
\centering
\subfigure[Relative error vs iteration]{
\includegraphics[width=0.46\textwidth]{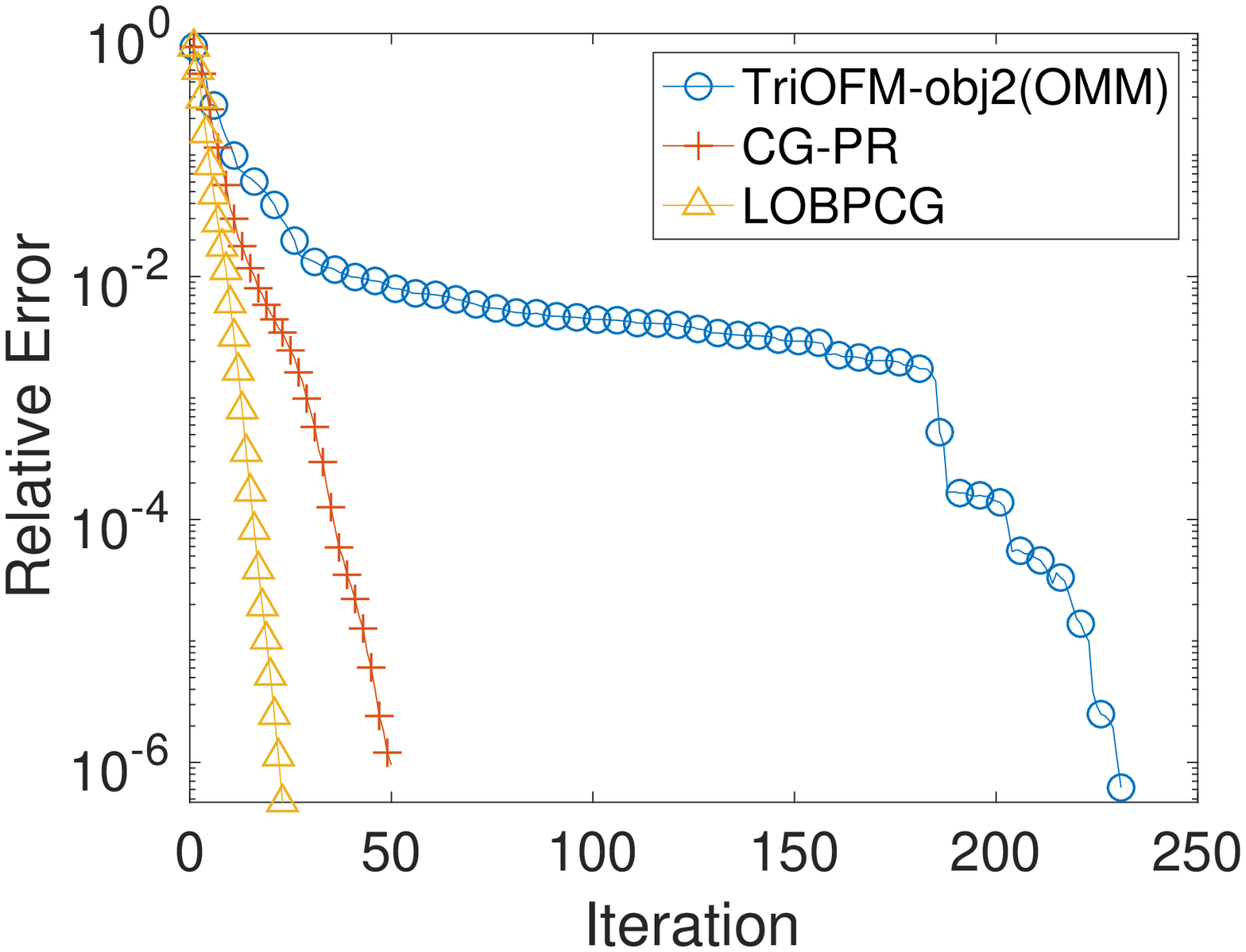}
}
\subfigure[Relative error vs CPU time]{
\includegraphics[width=0.46\textwidth]{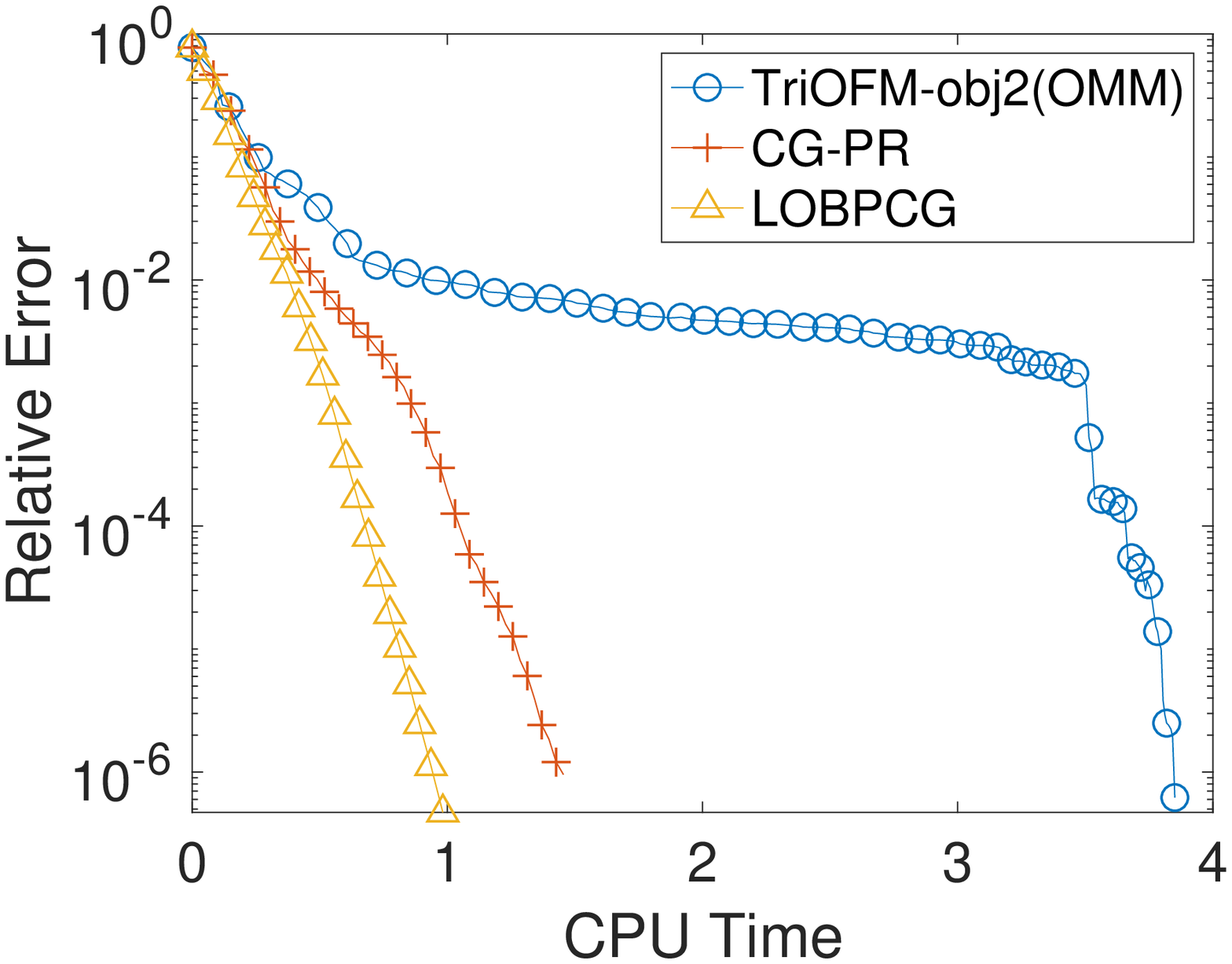}
}
\caption{Comparison for computing the top-10-eigenvalue problem of a $10^4$-by-$10^4$ matrix with randomly distributed eigenvalues.}
\label{fig:rand-dstribution}
\end{figure}
 
\begin{figure}[H]
\centering
\subfigure[Relative error vs iteration]{
\includegraphics[width=0.46\textwidth]{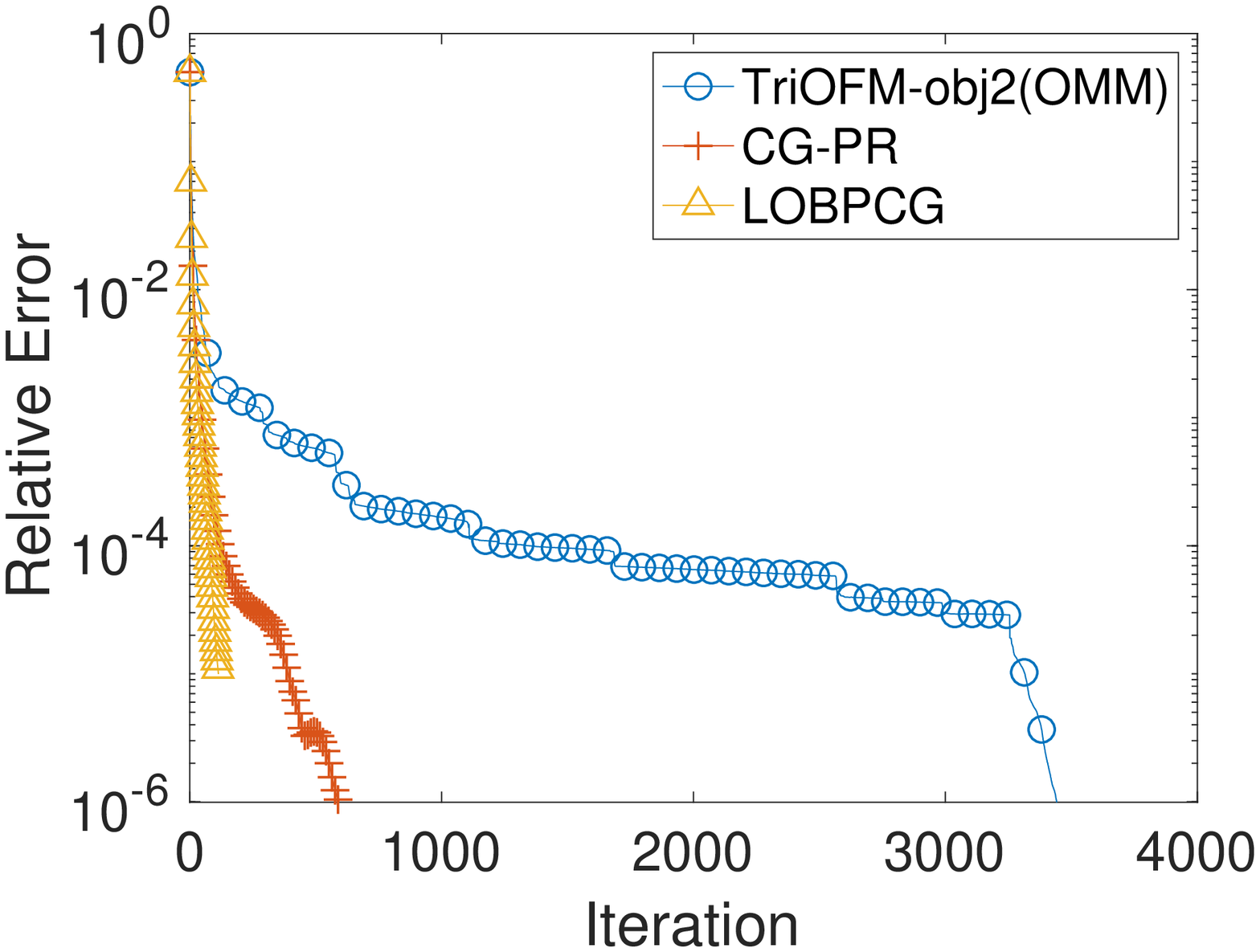}
}
\subfigure[Relative error vs CPU time]{
\includegraphics[width=0.46\textwidth]{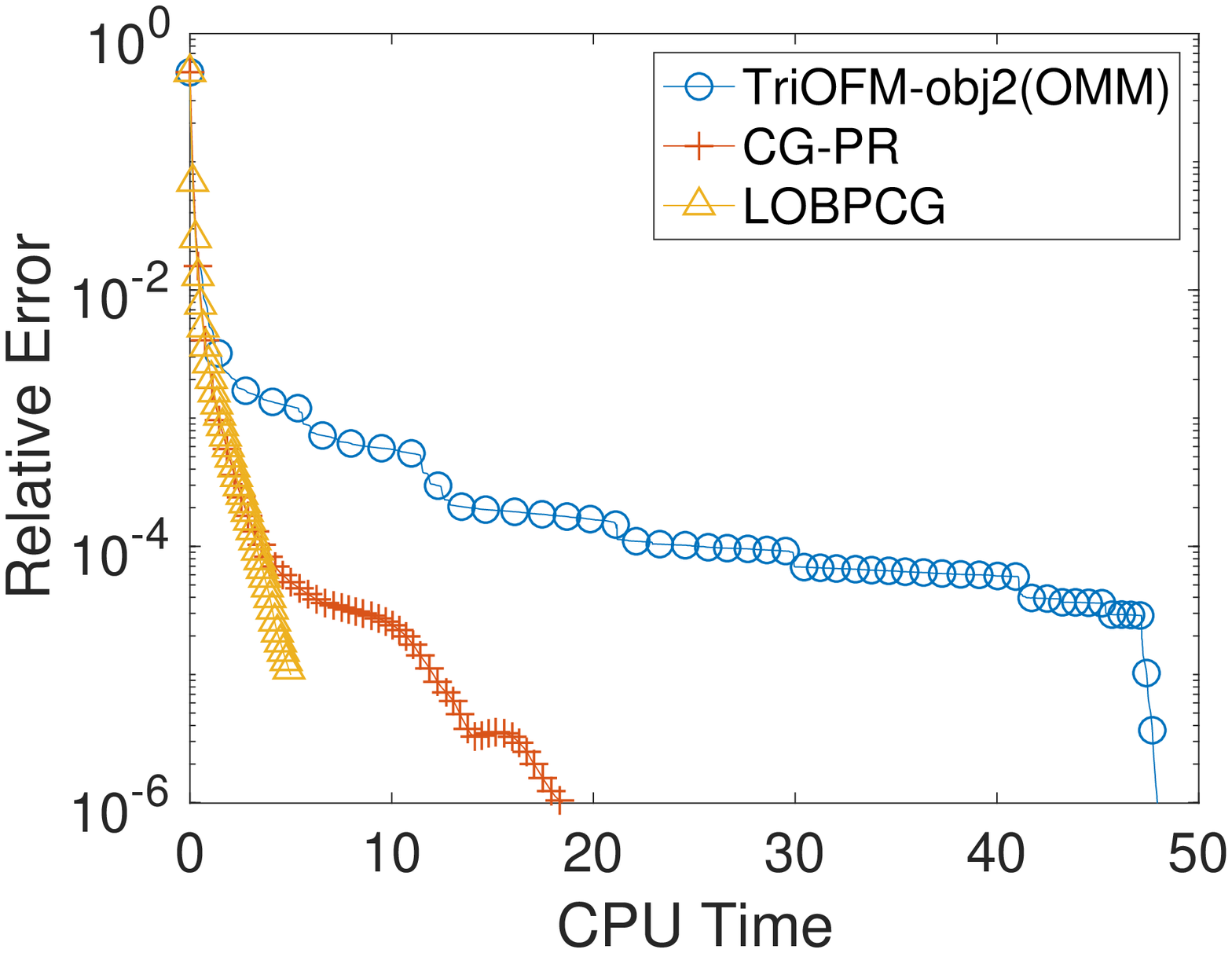}
}
\caption{Comparison for computing the  top-10-eigenvalue problem of a $10^4$-by-$10^4$ matrix with uniformly distributed eigenvalues.}
\label{fig:dct_uniform}
\end{figure}

\begin{figure}[H]
\centering
\subfigure[Relative error vs iteration]{
\includegraphics[width=0.46\textwidth]{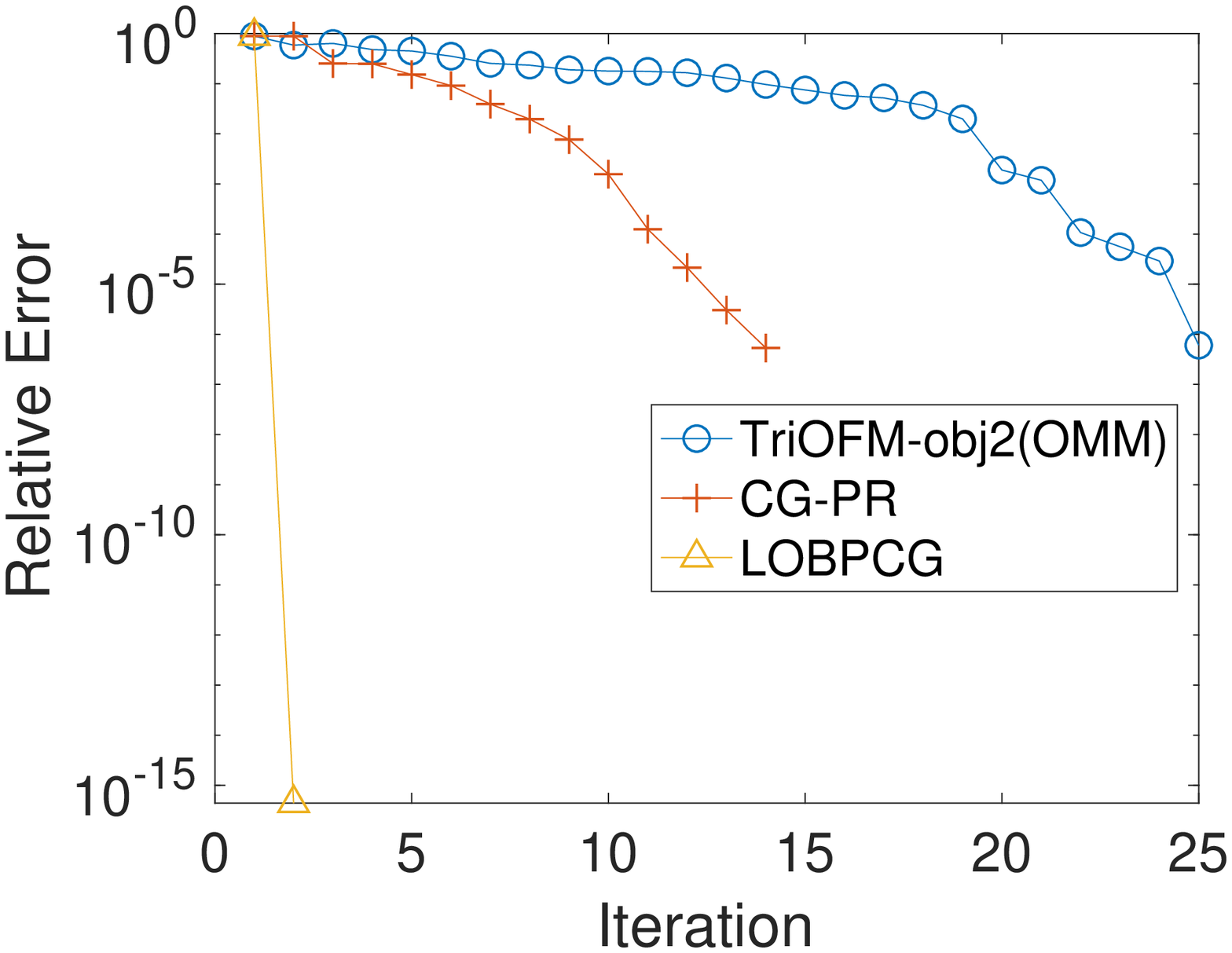}
}
\subfigure[Relative error vs CPU time]{
\includegraphics[width=0.46\textwidth]{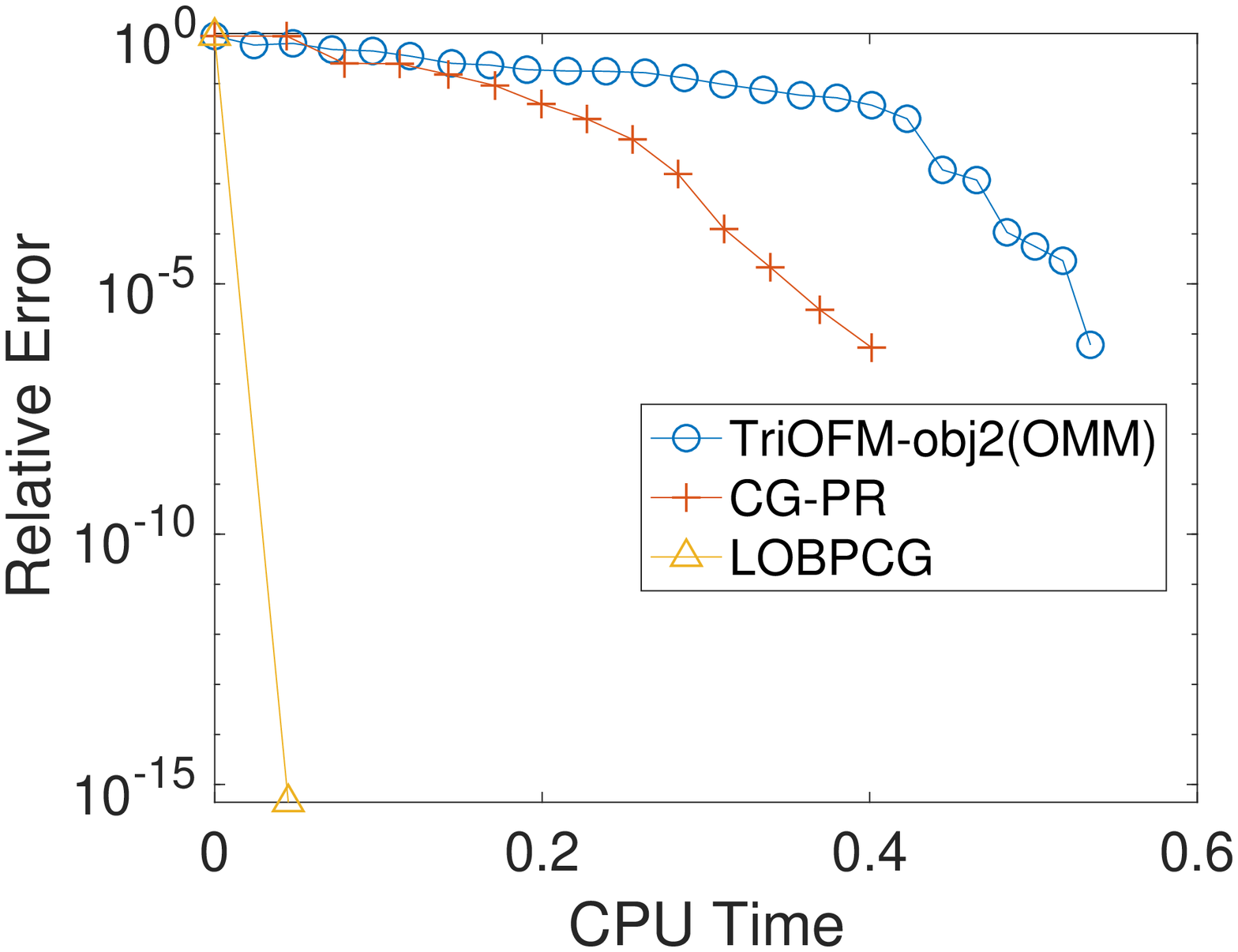}
}
\caption{Comparison for computing the  top-10-eigenvalue problem of a $10^4$-by-$10^4$ matrix with U-shape distributed eigenvalues.}
\label{fig:dct_ushape}
\end{figure}

\begin{figure}[H]
\centering
\subfigure[Relative error vs iteration]{
\includegraphics[width=0.46\textwidth]{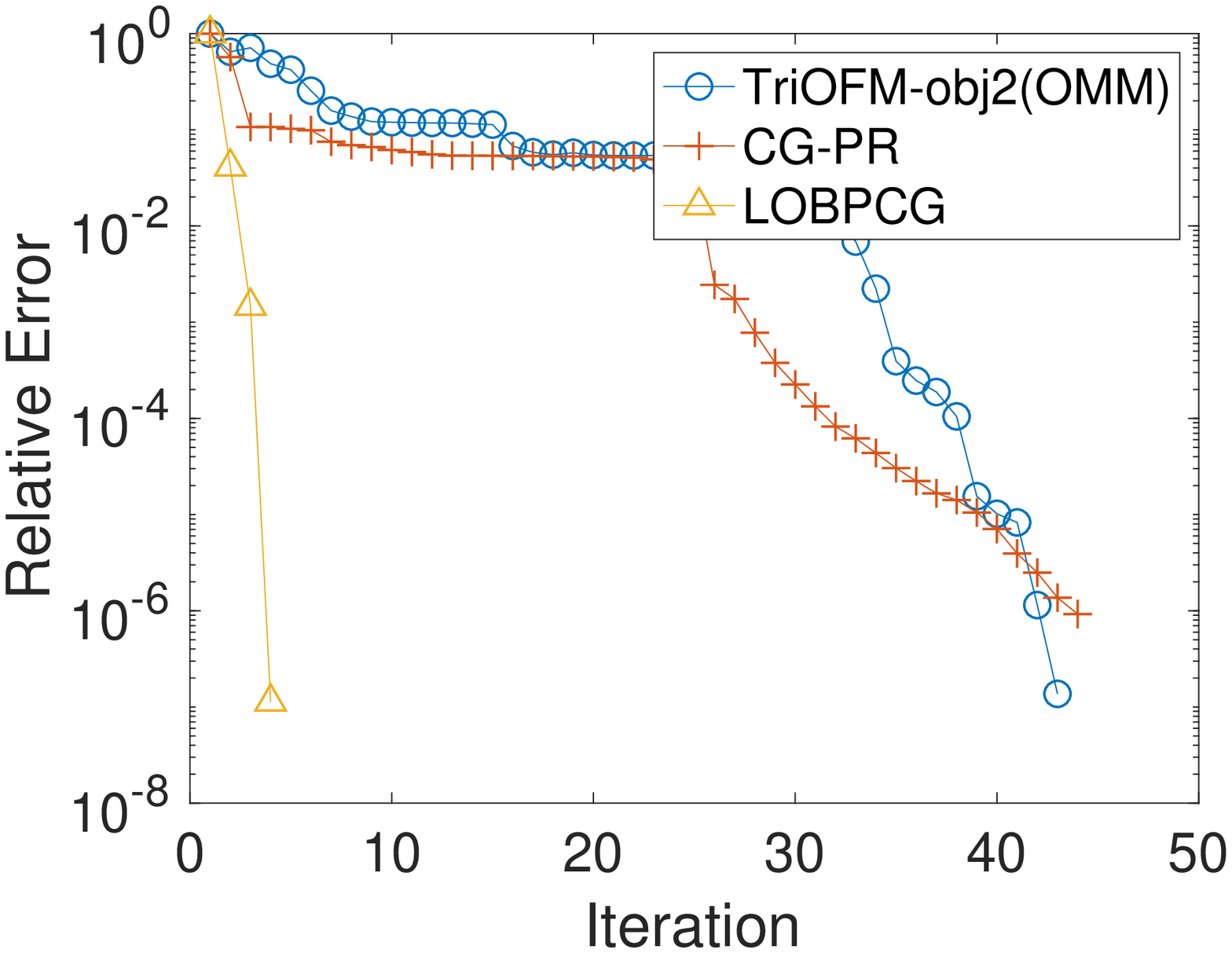}
}
\subfigure[Relative error vs CPU time]{
\includegraphics[width=0.46\textwidth]{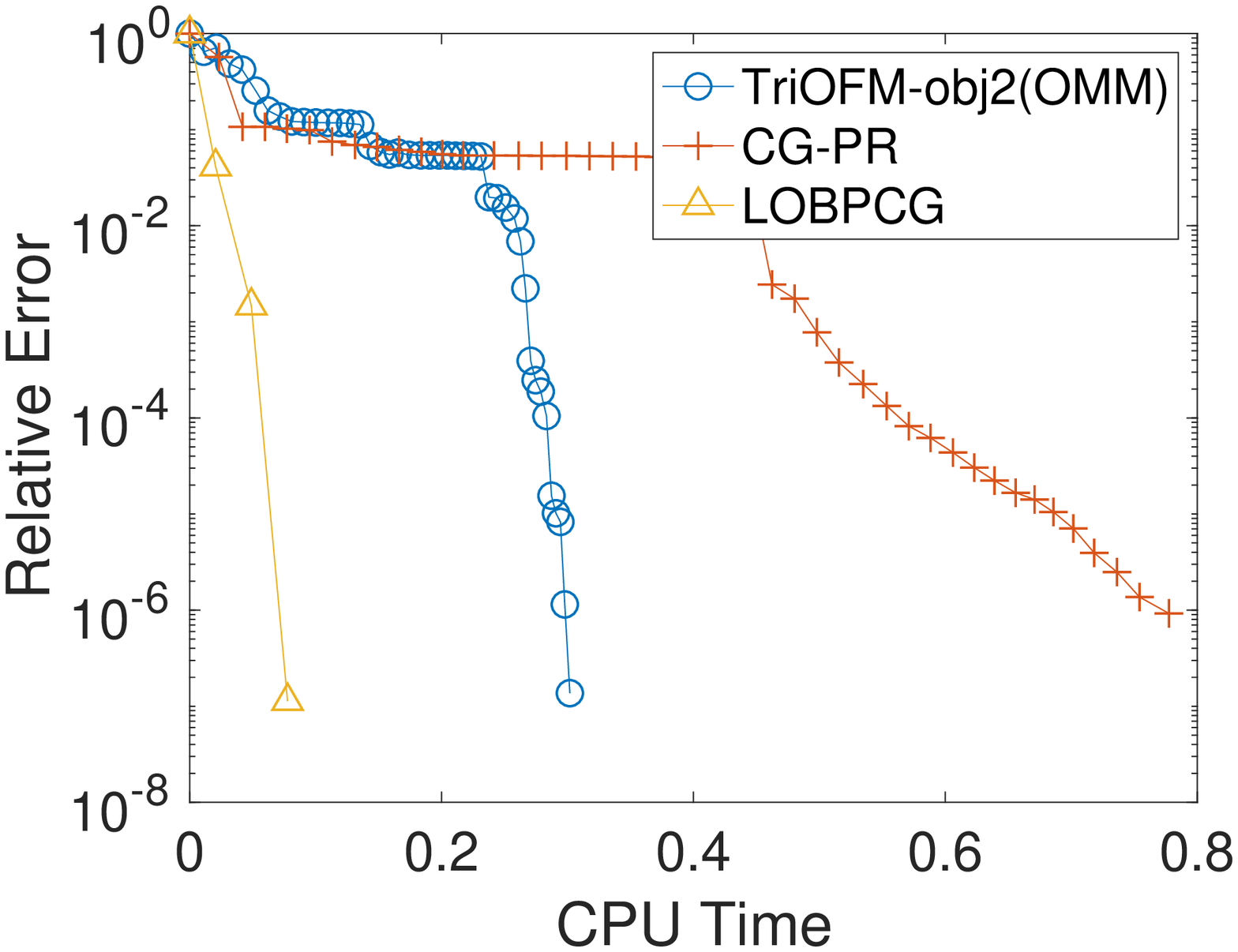}
}
\caption{Comparison for computing the top-5-eigenvalue problem of a $10^4$-by-$10^4$ matrix with logarithm distributed eigenvalues.}
\label{fig:log-2}
\end{figure}

\begin{figure}[H]
\centering
\subfigure[Relative error vs iteration]{
\includegraphics[width=0.46\textwidth]{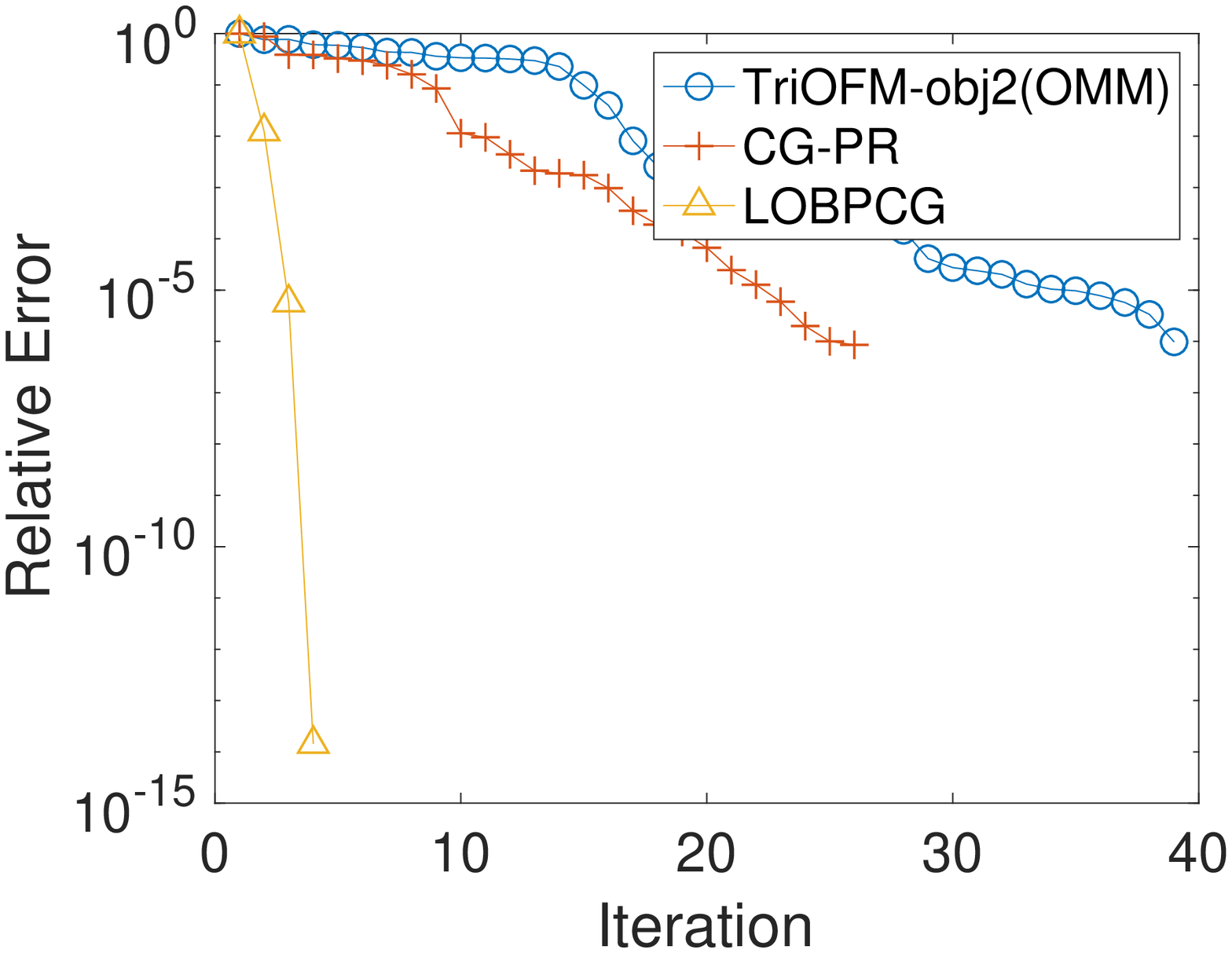}
}
\subfigure[Relative error vs CPU time]{
\includegraphics[width=0.46\textwidth]{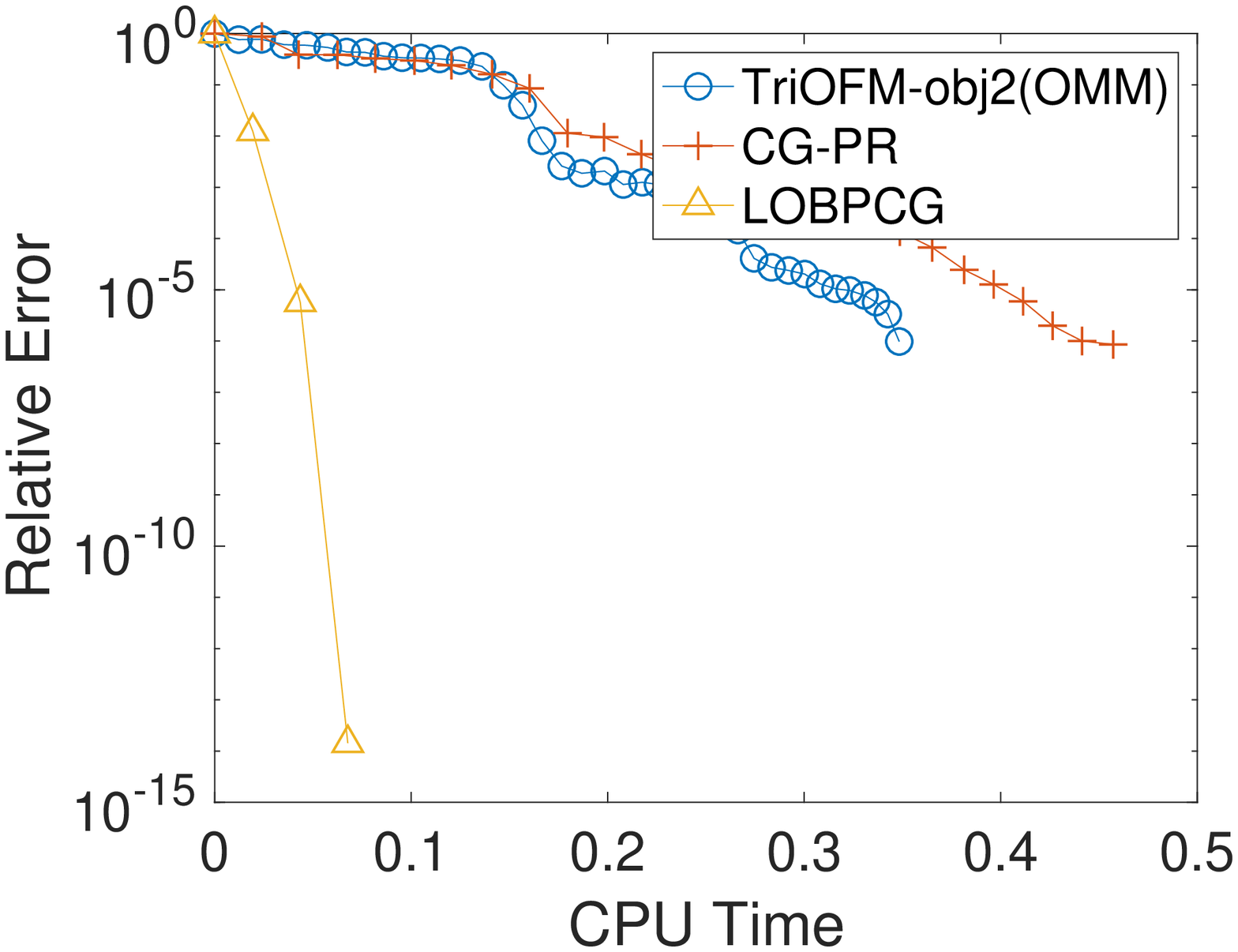}
}
\caption{Comparison for computing the top-5-eigenvalue problem of a $10^4$-by-$10^4$ matrix with eigenvalues $\lambda_1+C\geq \lambda_2+C\geq \dots \geq \lambda_5+C\geq \lambda_{5+1}\geq \cdots\geq \lambda_n$,
where $C=\lambda_1$ and $\lambda_1\geq \lambda_2\geq  \cdots\geq \lambda_n$ has a log distribution.}
\label{fig:log-3}
\end{figure}

\subsection{Hermitian PSD matrices}
It is shown in \cite{zheng2023riemannian} that Algorithm \ref{alg:RCG} can be used for finding the top eigenvalues of a Hermitian PSD matrix. We test Algorithm \ref{alg:RCG}  on \ref{min_prob_BM} for a matrix $A$ with eigenvectors defined by 2D Fast Fourier Transform. Namely, the linear operator of applying $A$ to a 2D array $u$ is defined by 
\[
Au = ifft2( \Sigma.* fft2(u)),
\]
where $.*$ denotes the entrywise product and $\Sigma $ is a 2D array consisting of nonnegative eigenvalues of $A$. 

The performance of the CG-PR method is shown in Figure \ref{fig:Hermitian-ex} for
four kinds of eigenvalue distributions in such a Hermitian PSD matrix.

\begin{figure}[H]
\centering
\subfigure[Relative error vs iteration]{
\includegraphics[width=0.46\textwidth]{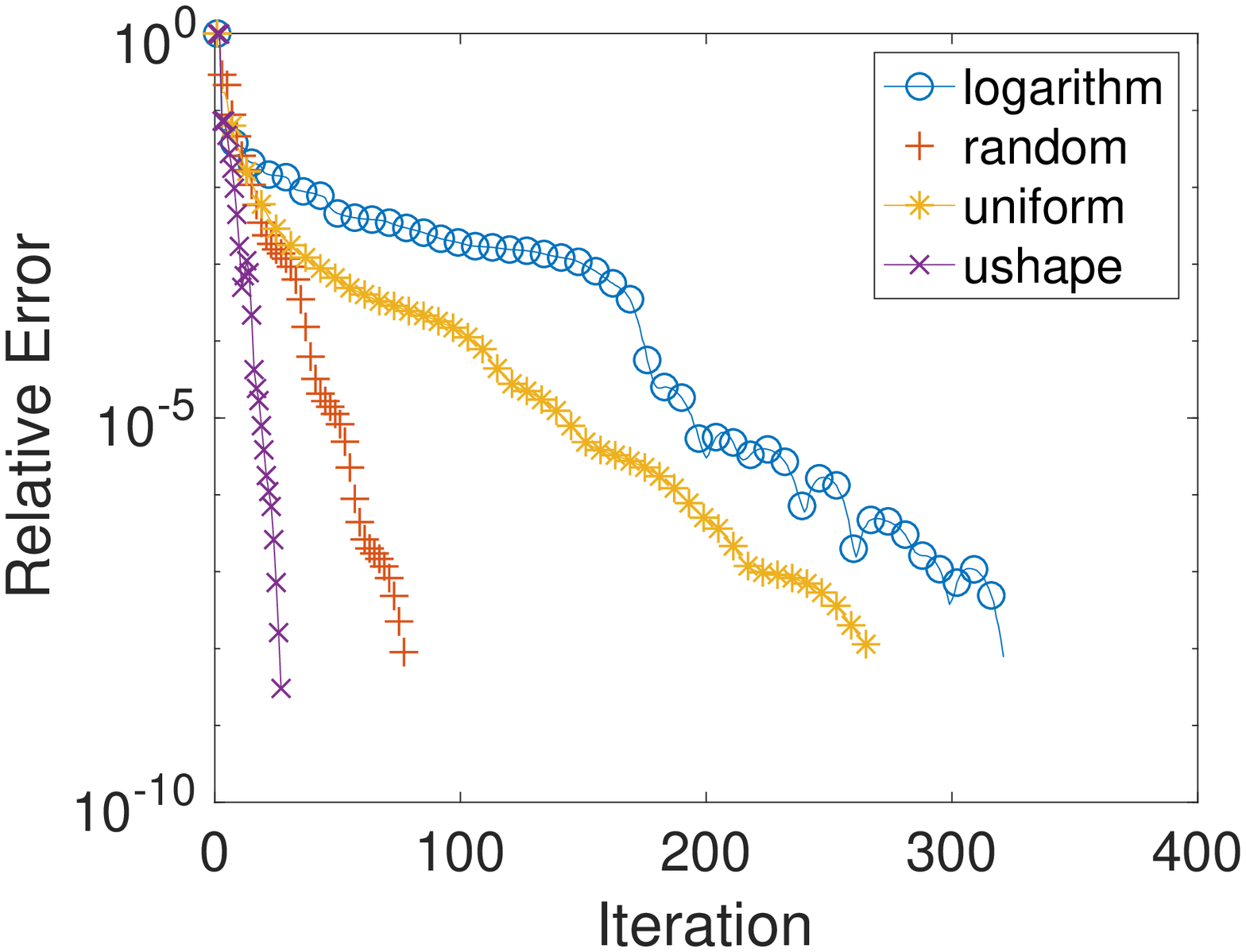}
}
\subfigure[Relative error vs CPU time]{
\includegraphics[width=0.46\textwidth]{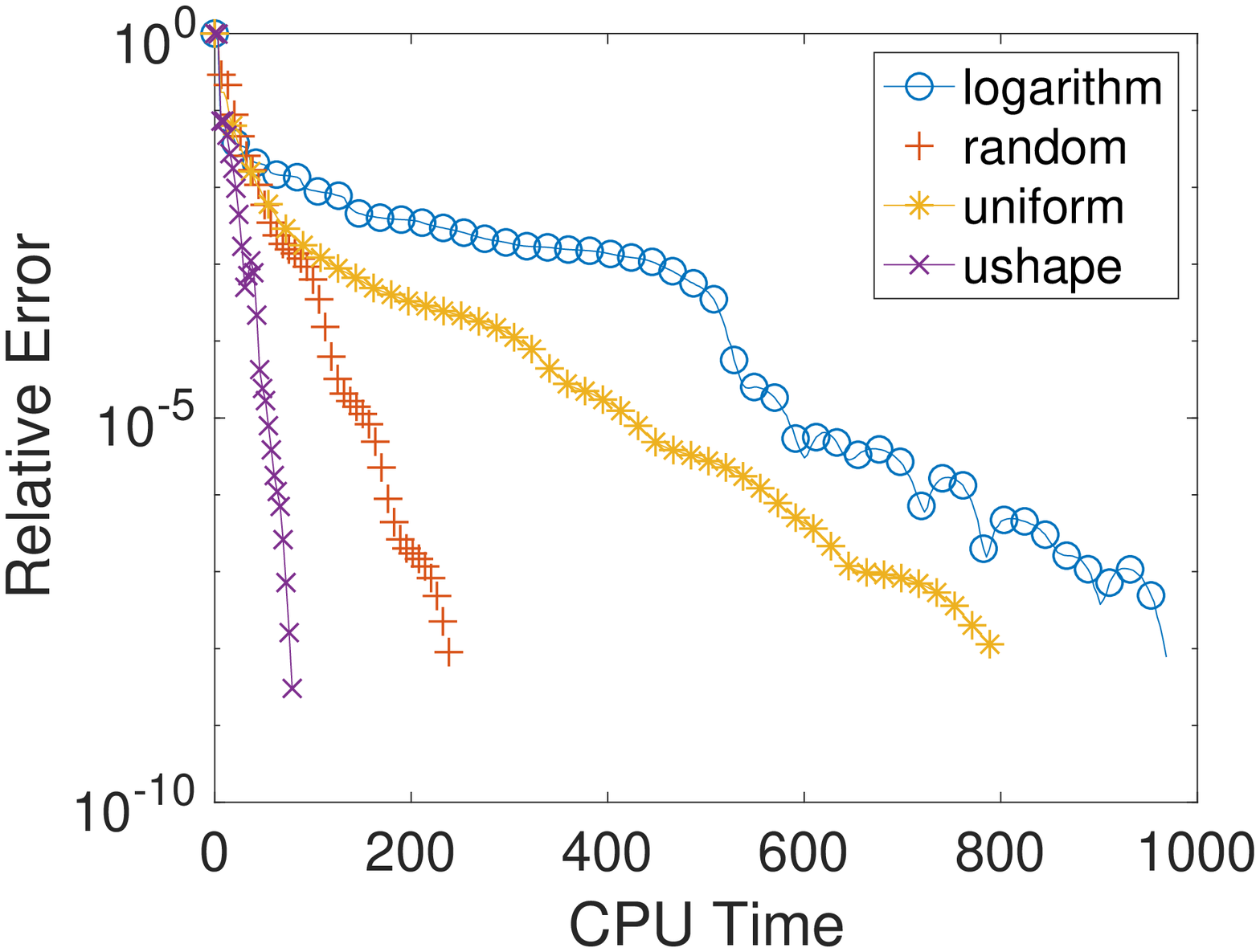}
}
\caption{The CG-PR method for the top-10-eigenvalue problem with rank-1000 Hermitian matrices of $10^6$-by-$10^6$ with different distributions of eigenvalues.}
\label{fig:Hermitian-ex}
\end{figure}

\subsection{Smallest eigenvalues}
\subsubsection{Inverse 2D Laplacian matrix}
One technique to find the smallest eigenvalues of a given invertible matrix $A$ is through the shift-and-inverse method. That is,  to find the largest eigenvalues of 
$(A+\mu I)^{-1}$, where $\mu >0 $ is a shift constant such that $A+\mu I$ becomes positive definite. We use this method to find the smallest eigenvalues of the 2D Laplacian matrix $A$ as in $\eqref{eqn:2d_laplacian_matrix}$. 

Notice that the top eigenvalues of $A^{-1}$  almost follow a logarithm distribution. Based on our observation, we can choose $\mu$ appropriately to make the top eigenvalues of $(A+\mu I)^{-1}$ have a uniform distribution to accelerate the convergence of the CG method. Since we know the true eigenvalues of $A$, we shift it by choosing $\mu$ to be the smallest desired eigenvalue. That is, suppose the smallest $r$ eigenvalues of $A$ is $\sigma_1 \leq \sigma_2 \leq \dots \leq \sigma_r$.  Then we choose $\mu = \sigma_1$. As a result the top eigenvalues of $(A+\mu I)^{-1}$ would be $\frac{1}{\sigma_1+\sigma_1} \geq \frac{1}{\sigma_2 + \sigma_1} \geq \dots \geq \frac{1}{\sigma_r + \sigma_1}$ that almost follows a uniform distribution. A fast matrix inversion is implemented by using the eigendecomposition of the matrix. The performance is shown in Figure \ref{fig:small-eig-1}
and Figure \ref{fig:small-eig-2}.

\begin{figure}[H]
\centering
\subfigure[Relative error vs iteration]{
\includegraphics[width=0.46\textwidth]{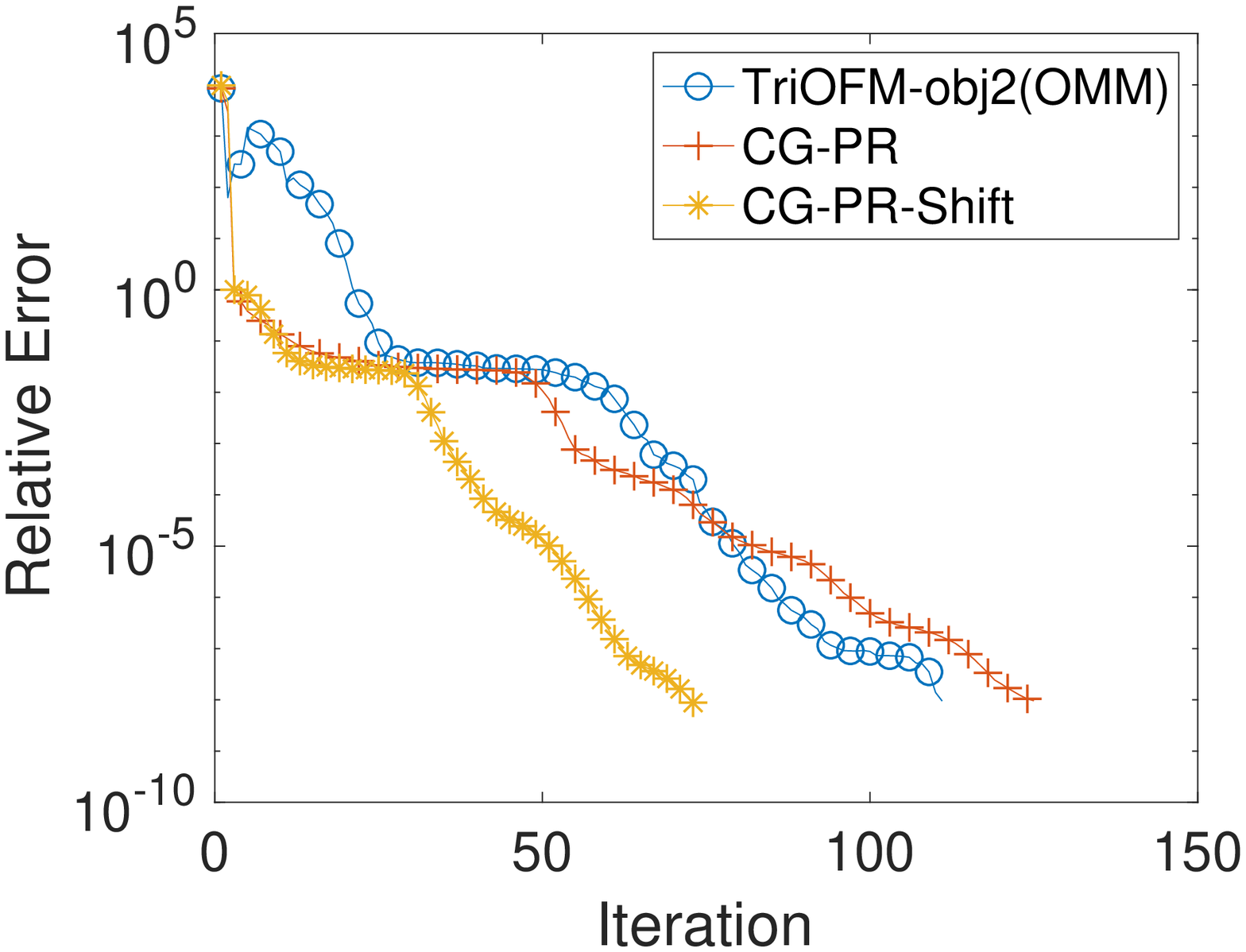}
}
\subfigure[Relative error vs CPU time]{
\includegraphics[width=0.46\textwidth]{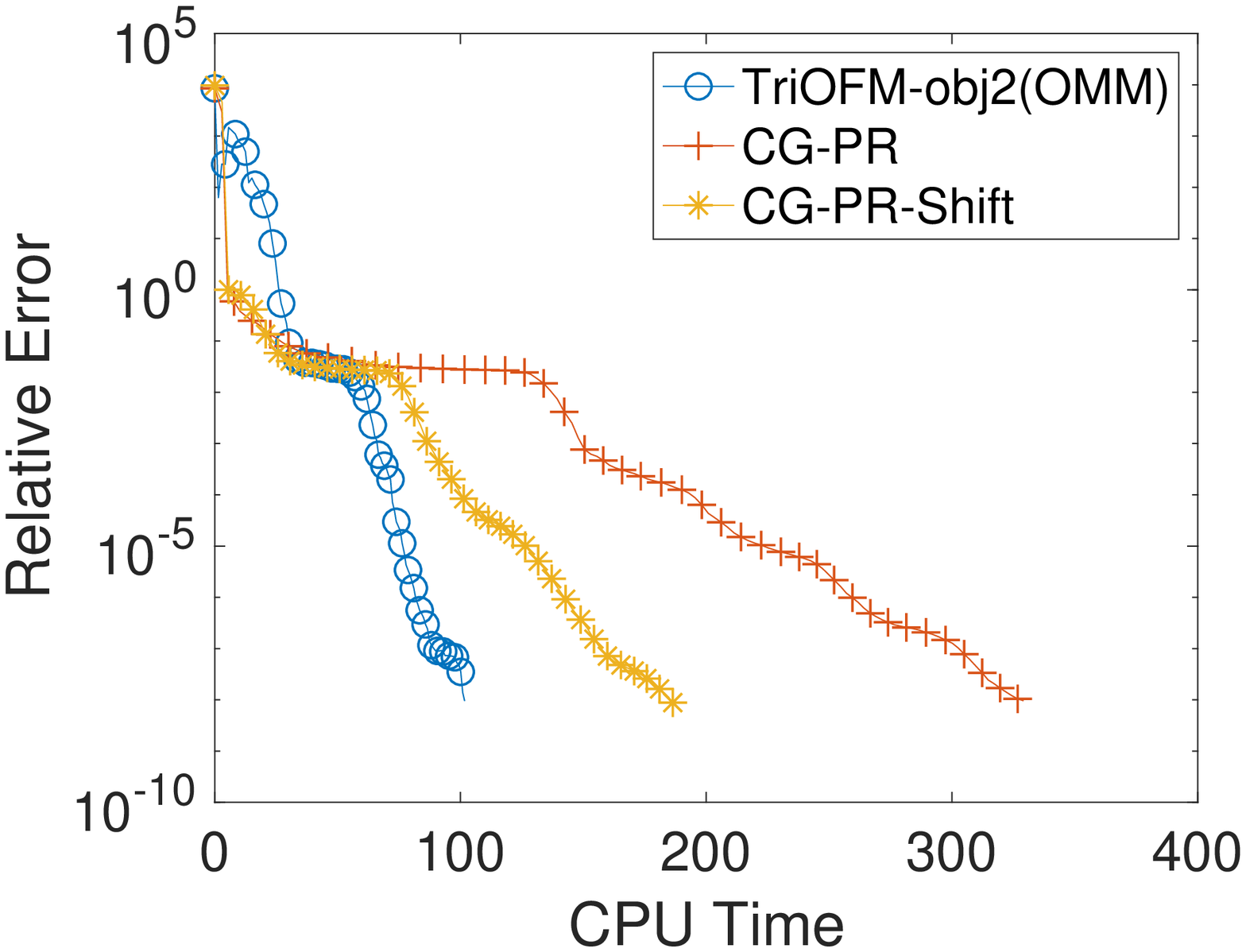}
}
\caption{The shift-and-inverse method on the smallest-10-eigenvalue problem of a $10^6$-by-$10^6$ 2D-Laplacian matrix.}
\label{fig:small-eig-1}
\end{figure}

\begin{figure}[H]
\centering
\subfigure[Relative error vs iteration]{
\includegraphics[width=0.46\textwidth]{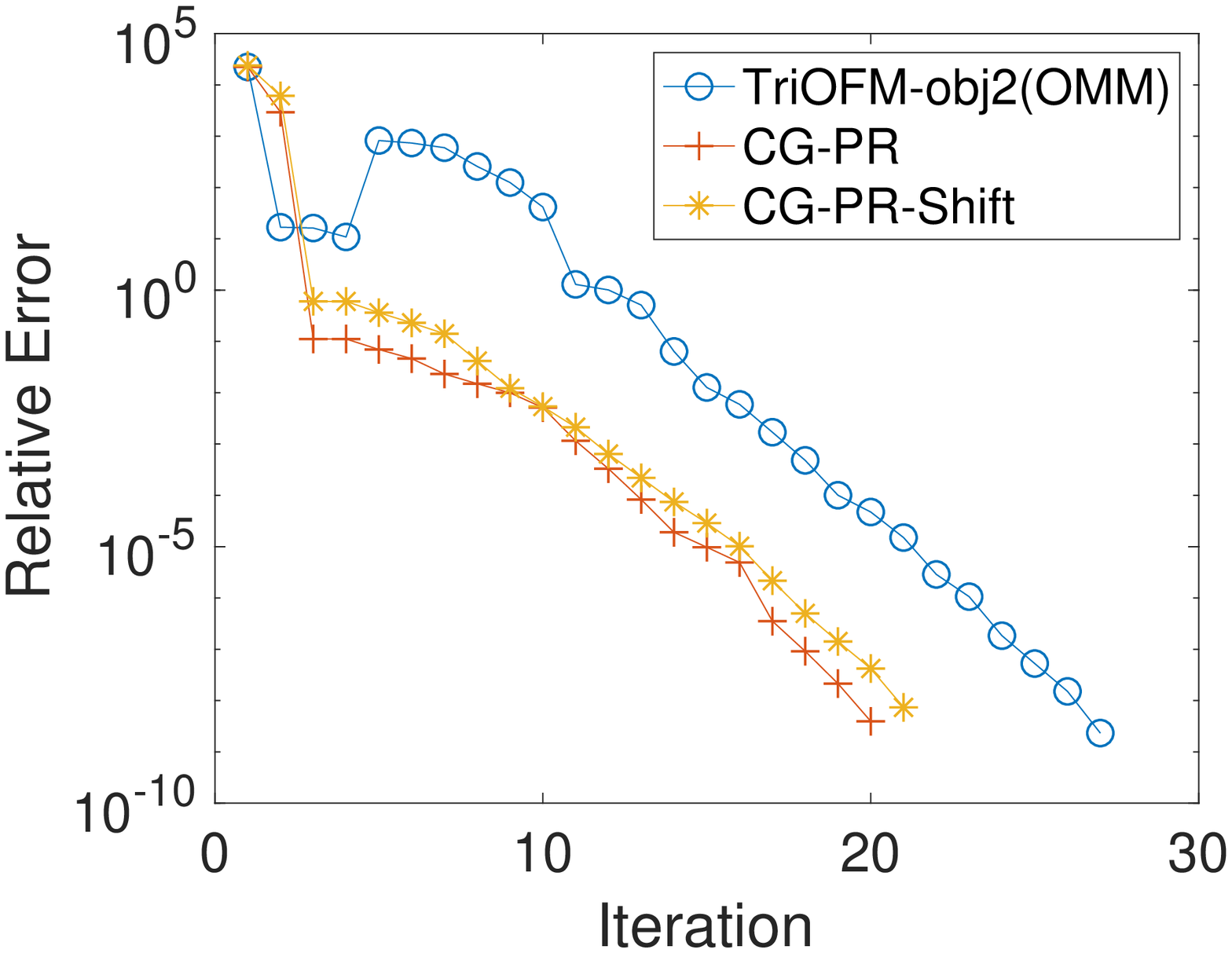}
}
\subfigure[Relative error vs CPU time]{
\includegraphics[width=0.46\textwidth]{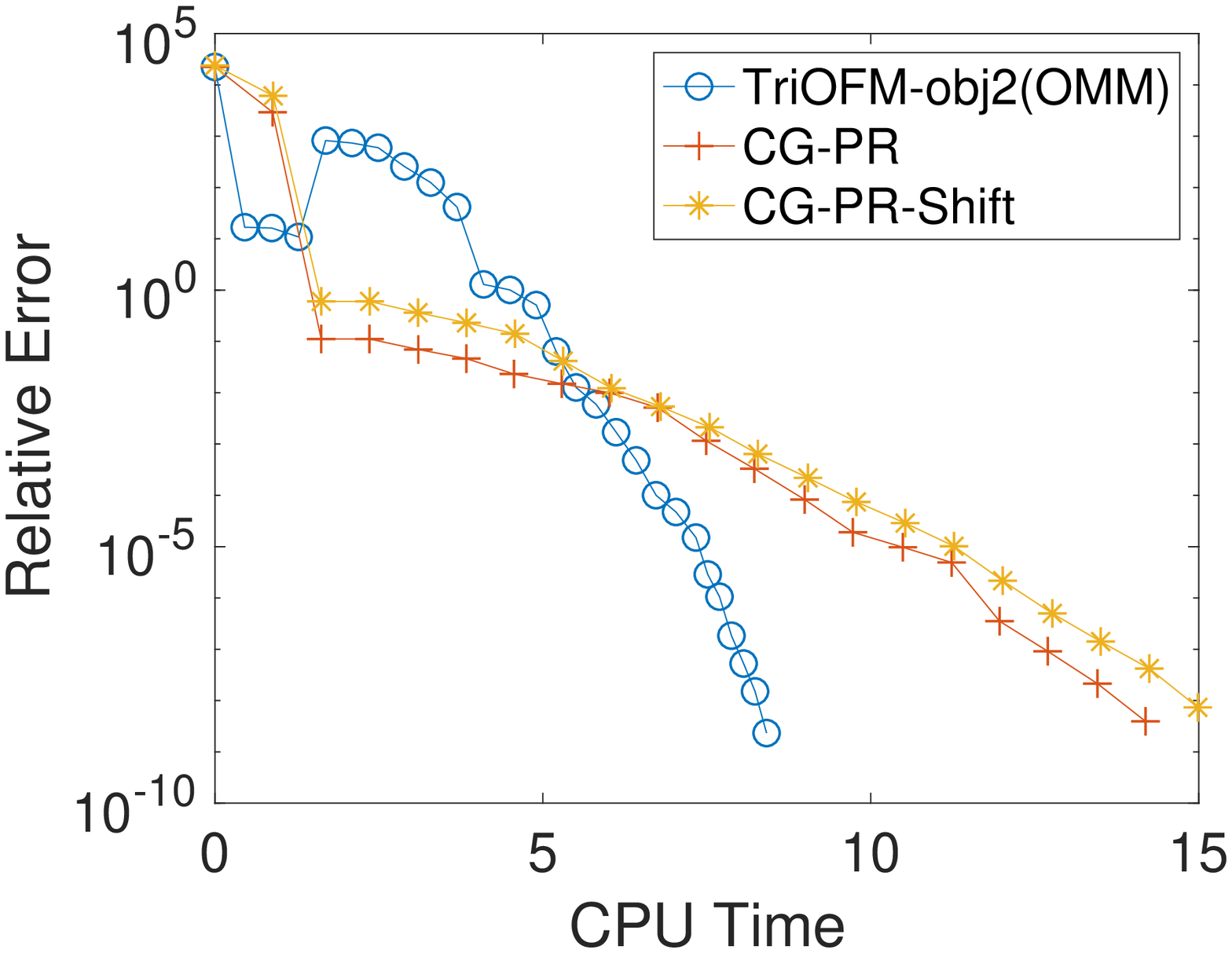}
}
\caption{The shift-and-inverse method on the smallest-3-eigenvalue problem of a $10^6$-by-$10^6$ 2D-Laplacian matrix.}
\label{fig:small-eig-2}
\end{figure}

\subsubsection{Negative 2D Laplacian matrix}
Another way to find the smallest eigenvalues of a given matrix $A$ is through the negative-shift method. That is, to consider finding the largest eigenvalues of $\mu I - A$, where $\mu >0$ is a shift constant such that $\mu I - A$ is positive semi-definite. We use this method to find the smallest eigenvalues of the 2D Laplacian matrix defined in $\eqref{eqn:2d_laplacian_matrix}$. 

Notice we need to shift at least the largest eigenvalue of $A$ to ensure that $\mu I -A$ is PSD. And once we find the top eigenvalues of $\mu I  - A$ we need to shift back and extract the smallest eigenvalues of $A$ by computing $\mu - (\mu - \sigma)$, where $\sigma$'s are the smallest eigenvalues of $A$. Hence when the condition number of $A$ is bad, i.e., if $\mu >> \sigma$, then we might lose a significant number of digits of accuracy for computing $\mu - (\mu - \sigma)$.  
In our numerical tests, we did not encounter this numerical accuracy issue.
The performance is shown in Figure \ref{fig:negative-shift}. Notice that the negative-shift method is much slower than 
the shift-and-inverse method, because of the different distributions of the largest eigenvalues of $\mu I-A$ and $(A+\mu I)^{-1}$.

\begin{figure}[h]
\centering
\subfigure[Relative error vs iteration]{
\includegraphics[width=0.46\textwidth]{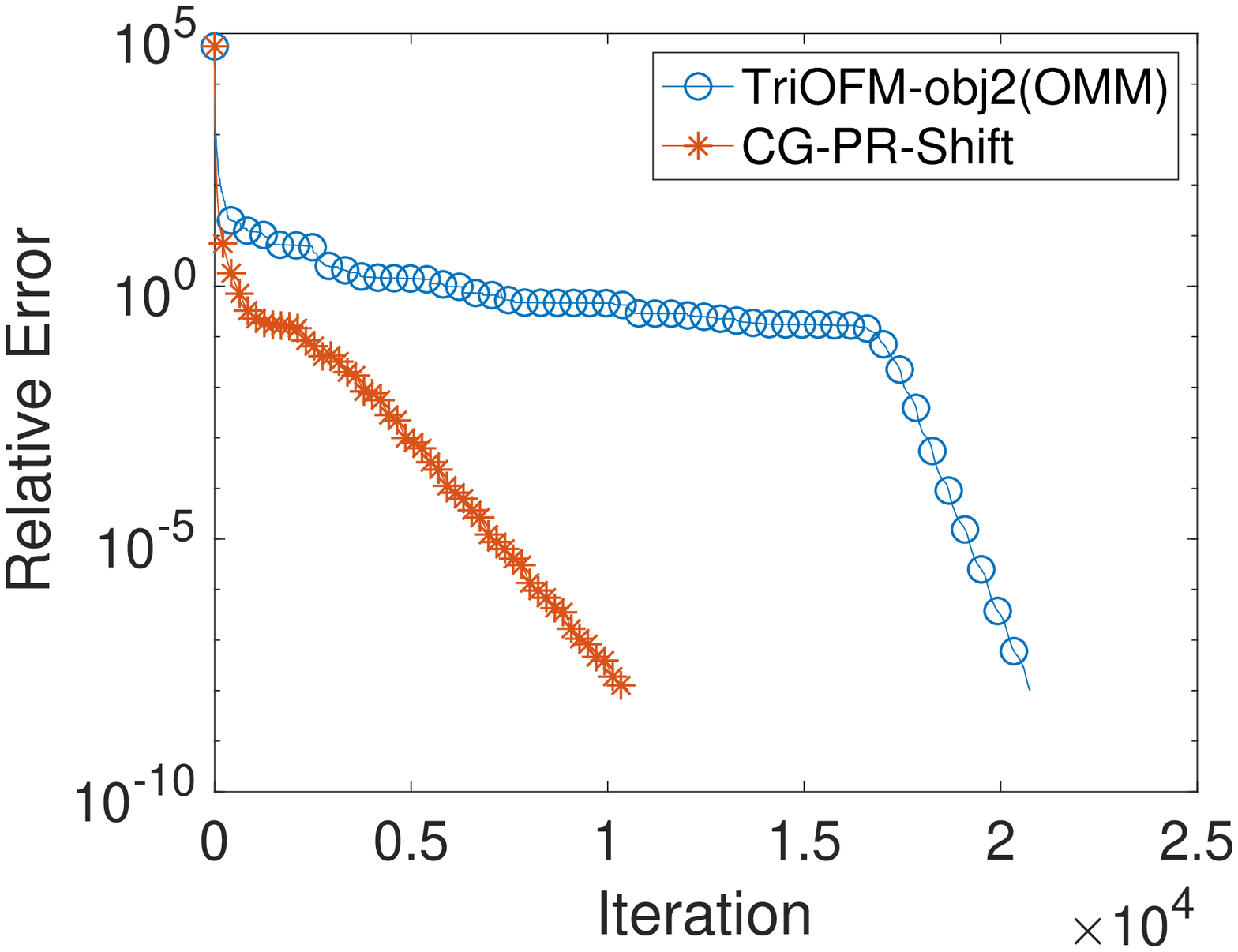}
}
\subfigure[Relative error vs CPU time]{
\includegraphics[width=0.46\textwidth]{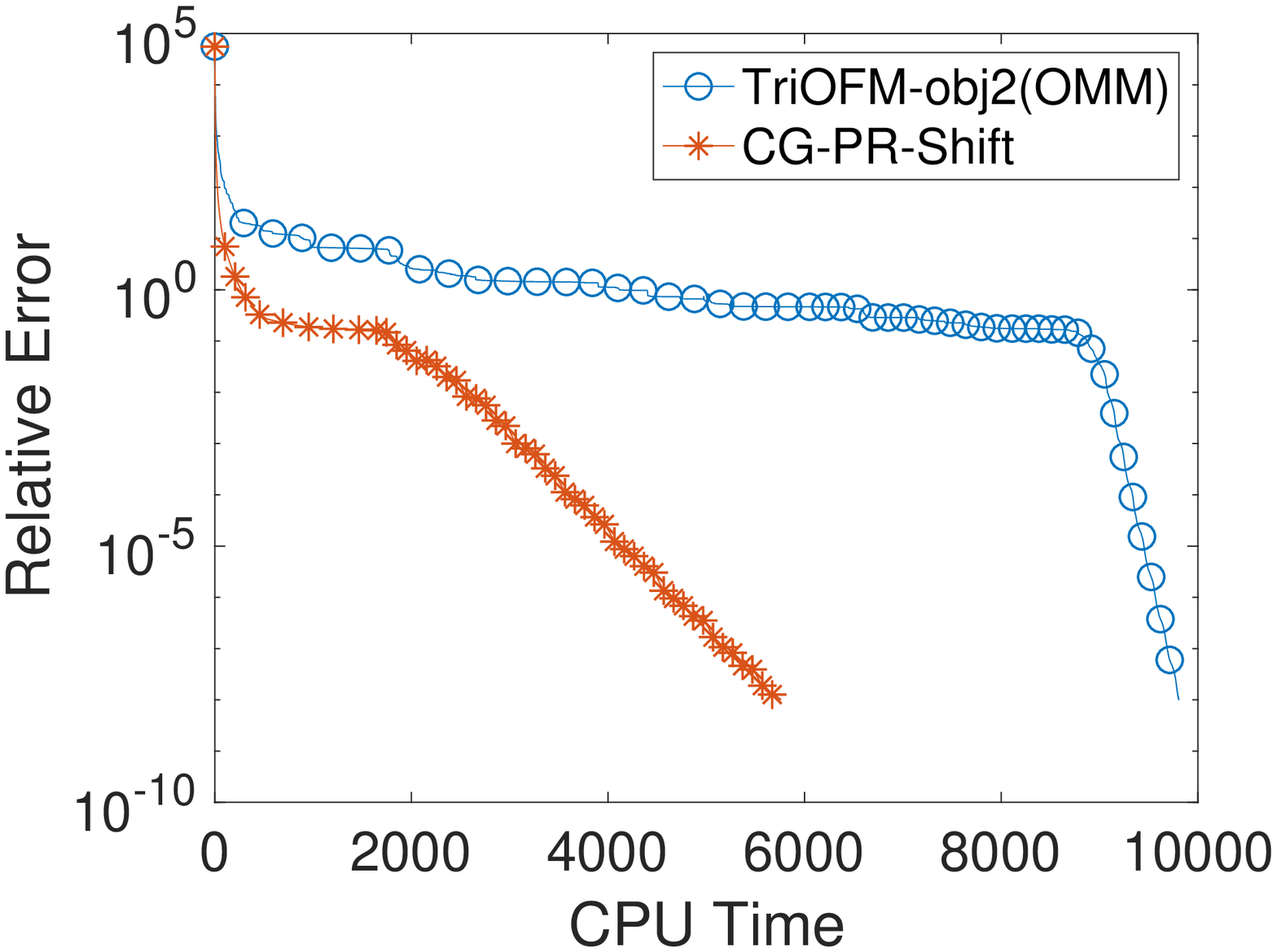}
}
\caption{The negative-shift method on the smallest-10-eigenvalue problem of a $10^6$-by-$10^6$ 2D-Laplacian matrix.}
\label{fig:negative-shift}
\end{figure}

\textcolor{black}{
\subsection{Negative 3D Laplacian matrix}
We repeat the same test as in previous subsection for a larger problem of finding the smallest eigenvalues of a 3D discrete Laplacian on a $500^3$ grid, which corresponds to a matrix of size 1.25E8$\times$1.25E8. We implement both the simple CG method \eqref{BMCG} and TriOFM method on a   Nvidia GPU A100  80G. 
\begin{figure}[H]
\centering
\subfigure[Relative error vs iteration]{
\includegraphics[width=0.46\textwidth]{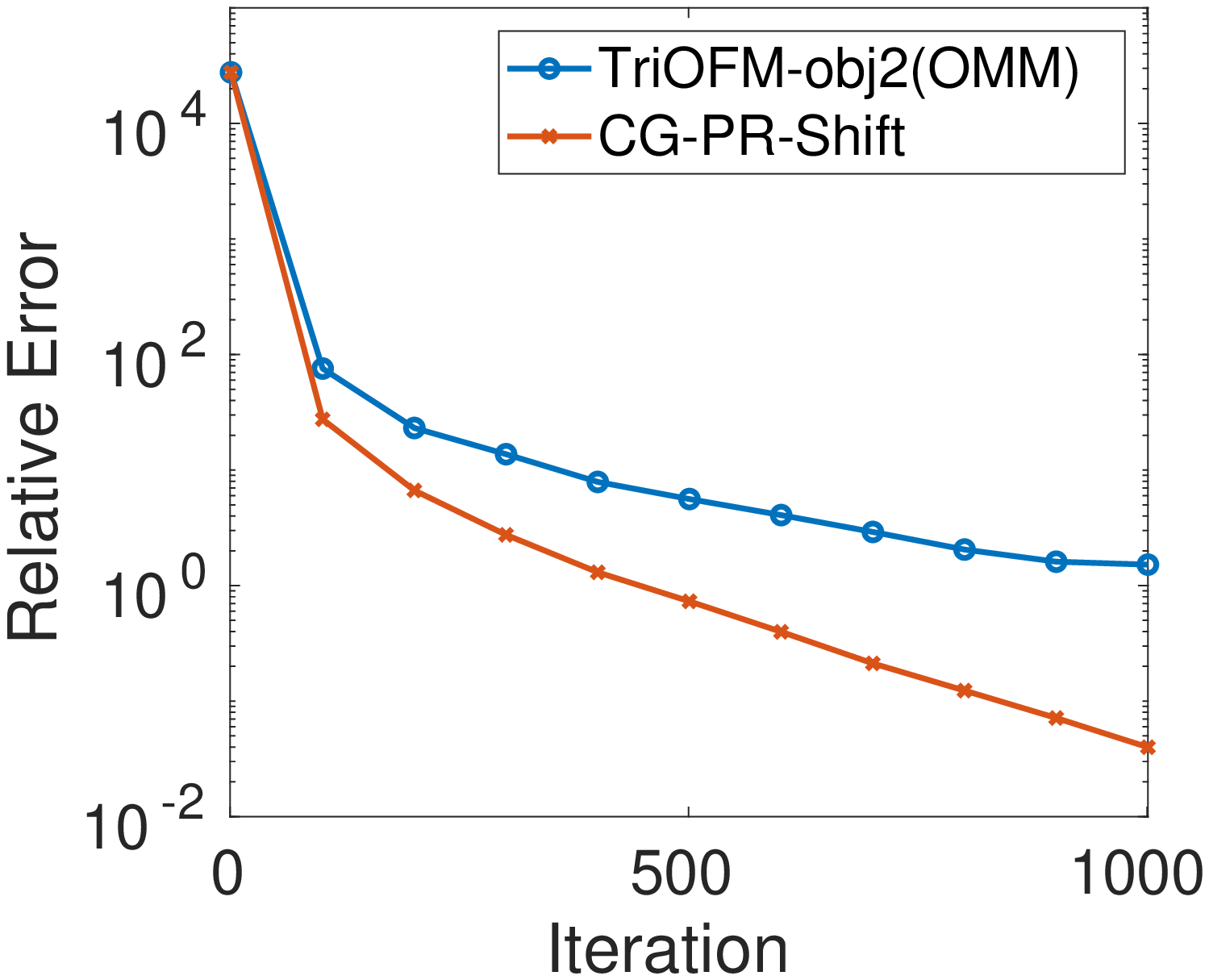}
}
\subfigure[Relative error vs GPU time]{
\includegraphics[width=0.46\textwidth]{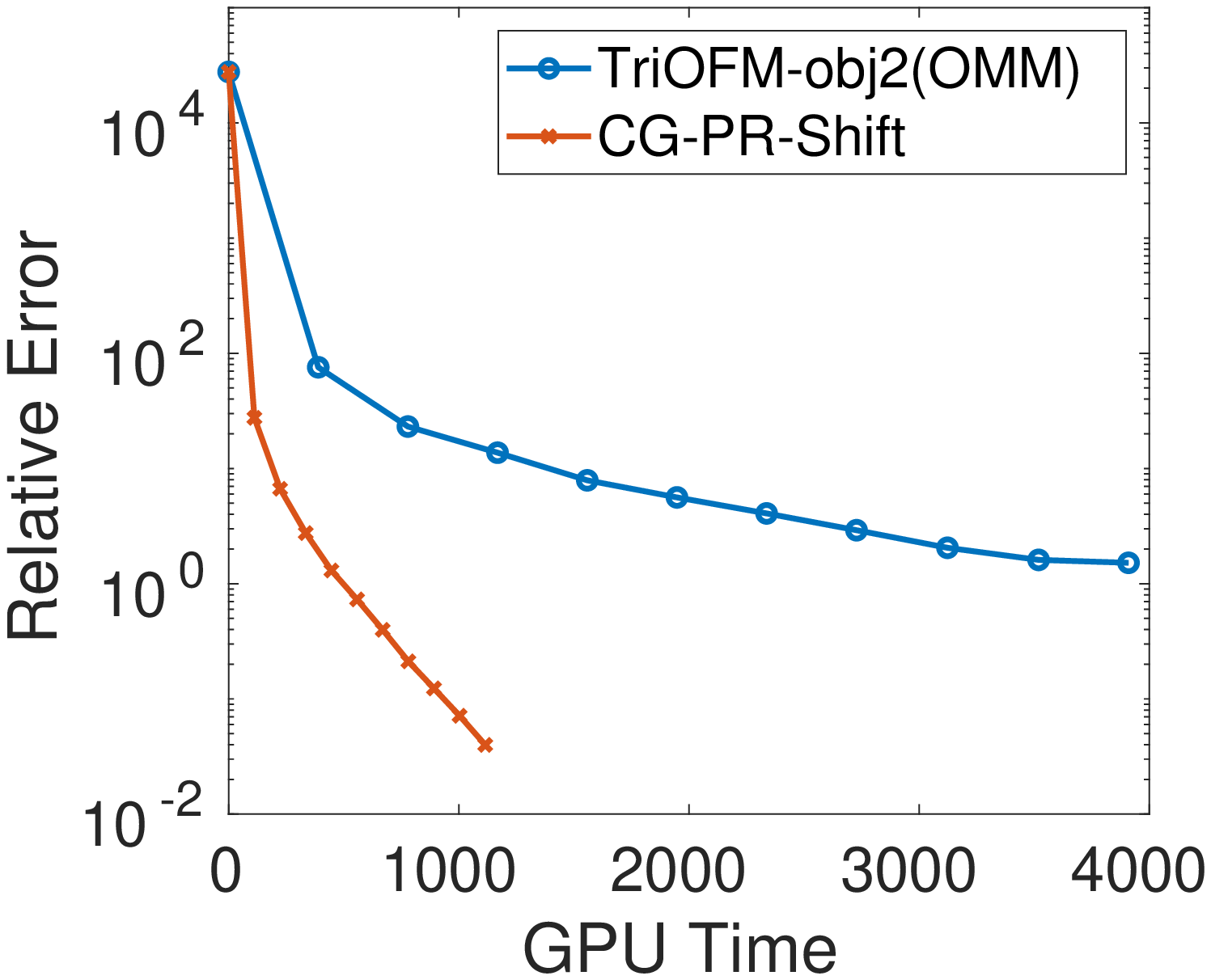}
}
\caption{\textcolor{black}{The shift-and-inverse method on the smallest-3-eigenvalue problem of a 3D-Laplacian matrix on a $500^3$ grid. The matrix size is 1.25E8$\times$1.25E8. Computation was done on Nvidia GPU A100  80G.}}
\label{fig:3d-laplacian}
\end{figure}
}
\subsection{Coordinate Riemannian gradient descent}
We consider applying the coordinate Riemannian gradient descent method described in Section \ref{sec-CRGD} to a 1D Laplacian matrix of size $n$-by-$n$ given by$
A = \frac{1}{\Delta x^2} K,$
where $\Delta x = \frac{1}{n+1}$ and $K$ is the tridiagonal matrix defined in \eqref{eqn:laplacian_matrix}. This example is only for the demonstration purpose of the coordinate gradient descent method. Choosing this simple $A$ makes it easy for the compact implementation of the matrix-vector multiplication of $Au$. One can also apply this method to any sparse matrix $A$ as long as one has the compact implementation of $M_k(Au)$ in $O(N)$, where $N$ is a constant independent of the problem size $n$.

As we can see from Figure \ref{fig:CRGD}, the CPU time for running the first 3000 iterations is independent of problem size. This demonstrated the $O(1)$ computational complexity of the coordinate Riemannian gradient descent method for leading eigenpairs.

\begin{figure}[htbp]
\centering
\subfigure[CPU time of the first 3000 iterations vs problem size $n = 100*2^k$ for $k$ goes from 4 to 13. Each iteration cyclically updates $N = 1000$ columns. ]{
\includegraphics[width=0.45\textwidth]{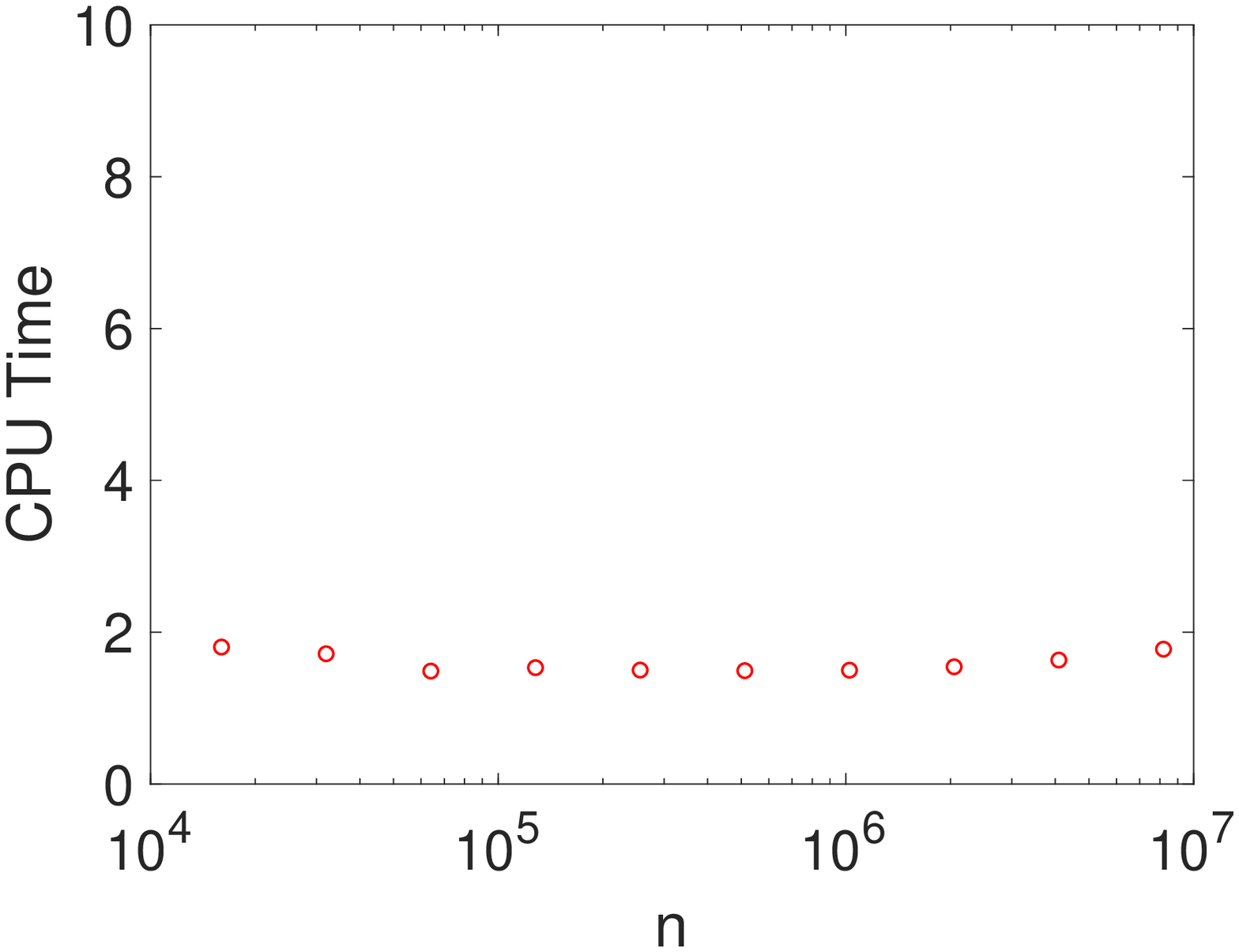}}
\subfigure[Relative error vs iteration. Problem size $n= 100*2^9$. Each iteration cyclically updates  $N = 100$ columns with constant step size $10^{-10}$.]{
\includegraphics[width=0.45\textwidth]{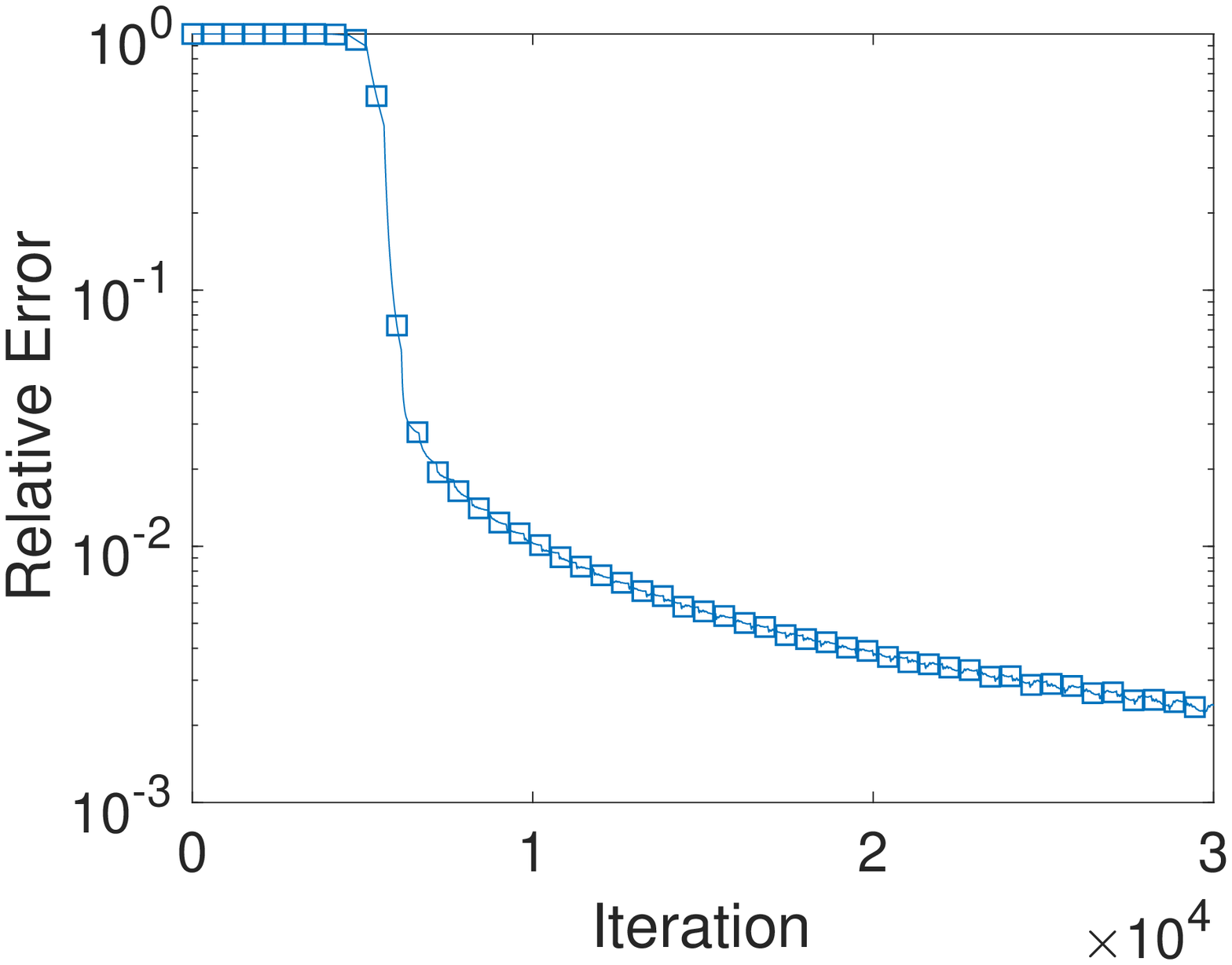}}
\caption{Coordinate Riemannian gradient descent for solving the top-10 eigenvalues of a Laplacian matrix.}
\label{fig:CRGD}
\end{figure}

\section{Conclusions}
\label{sec-remarks}

In this paper we have studied the orthogonalization-free method to find leading eigenpairs of a positive semi-definite Hermitian matrix via an unconstrained Burer-Monteiro formulation.  For this optimization problem, we have shown the equivalence between the nonlinear conjugate gradient method and a Riemannian conjugate gradient method on a quotient manifold with \textcolor{black}{the Bures-Wasserstein} metric, leading to a new understanding of the global convergence of the nonlinear conjugate gradient method in Burer-Monteiro formulation to a stationary point. \textcolor{black}{We have also shown that the simple coordinate descent method in Burer-Monteiro formulation is equivalent to a coordinate 
 Riemannian gradient descent method.}
\textcolor{black}{Numerical tests on large scale matrices have verified the numerical performance of the simple conjugate gradient method in Burer-Monteiro formulation for computing leading eigen-pairs, which is consistent with findings in the literatue. }

\section*{Acknowledgement}
S. Zheng and X. Zhang are supported by NSF DMS-2208518. H. Yang thanks Oracle Labs, part of Oracle America, Inc., for providing funding that supported research in the area of leading eigenvalue problems.
The authors are grateful to Yingzhou Li for providing the MATLAB code of TirOFM.





\bibliographystyle{elsarticle-num}
\bibliography{zotero, references}







\end{document}